\documentclass[11pt]{article}
    \usepackage{url}
    \usepackage{verbatim}
    \usepackage[titletoc]{appendix}
    \usepackage{graphicx}
    \textwidth=6.5in
    \textheight=9.00in
    \footskip=0.5in
    \oddsidemargin=0in
    \topmargin=-0.5in

	\usepackage{amsmath}
	\usepackage{amsthm}
	\usepackage{amsfonts}
	\usepackage{graphicx}
    \usepackage{nicefrac}
    \usepackage{longtable}
    \usepackage{color}
    \usepackage{graphicx, amssymb, subcaption,graphics}

    \newcommand{\be}{\begin{equation}}
    \newcommand{\ee}{\end{equation}}

    \newcommand{\nrm}[1]{\left\| #1 \right\|}

    \newcommand\dt {{\Delta t}}

    \def\x{\mbox{\boldmath $x$}}
        
	\newtheorem{thm}{Theorem}[section]
	\newtheorem{prop}[thm]{Proposition}
	\newtheorem{cor}[thm]{Corollary}
	
	\newtheorem{lem}[thm]{Lemma}
	\newtheorem{rem}[thm]{Remark}
	\newtheorem{defi}[thm]{Definition}
	
	\begin{document}
	\title{An Energy Stable BDF2 Fourier Pseudo-Spectral Numerical Scheme for the Square Phase Field Crystal Equation}
	
	\author{
Kelong Cheng \thanks{School of Science, Southwest University of Science and Technology,  Mianyang, Sichuan 621010, P. R. China (zhengkelong@swust.edu.cn)}
	\and		
Cheng Wang\thanks{Department of Mathematics, The University of Massachusetts, North Dartmouth, MA  02747}	
	\and
Steven M. Wise\thanks{Department of Mathematics, The University of Tennessee, Knoxville, TN 37996 (Corresponding author: swise1@utk.edu)} 
	}

	\maketitle
	\numberwithin{equation}{section}

	\begin{abstract}
In this paper we propose and analyze an energy stable numerical scheme for the square phase field crystal (SPFC) equation, a gradient flow modeling crystal dynamics at the atomic scale in space but on diffusive scales in time. In particular, a modification of the free energy potential to the standard phase field crystal model leads to a composition of the 4-Laplacian and the regular Laplacian operators. To overcome the difficulties associated with this highly nonlinear operator, we design  numerical algorithms based on the structures of the individual energy terms. A Fourier pseudo-spectral approximation is taken in space, in such a way that the energy structure is respected, and summation-by-parts formulae enable us to study the discrete energy stability for such a high-order spatial discretization.  In the temporal approximation, a second order BDF stencil is applied, combined with an appropriate extrapolation for the concave diffusion term(s). A second order artificial Douglas-Dupont-type regularization term is added to ensure energy stability, and a careful analysis leads to the artificial linear diffusion coming at an order lower that that of surface diffusion term. Such a choice leads to reduced numerical dissipation. At a theoretical level, the unique solvability, energy stability are established, and an optimal rate convergence analysis is derived in the $\ell^\infty (0,T; \ell^2) \cap \ell^2 (0,T; H_N^3)$ norm. In the numerical implementation, the preconditioned steepest descent (PSD) iteration is applied to solve for the composition of the highly nonlinear 4-Laplacian term and the standard Laplacian term, and a geometric convergence is assured for such an iteration. Finally, a few numerical experiments are presented, which confirm the robustness and accuracy of the proposed scheme. 
	\end{abstract}
	
\noindent
{\bf Key words.} \, square phase field crystal equation, Fourier pseudo-spectral approximation, second order BDF stencil, energy stability, optimal rate convergence analysis, preconditioned steepest descent iteration
	
\medskip
	
\noindent 
{\bf AMS Subject Classification} \, 35K30, 35K55, 65K10, 65M12, 65M70

	\section{Introduction}
	
The phase field crystal (PFC) model was proposed in \cite{elder02} as a new approach to simulating crystal dynamics at the atomic scale in space but on diffusive scales in time.  The model naturally incorporates elastic and plastic deformations, multiple crystal orientations and defects and has already been used to simulate a wide variety of microstructures, such as epitaxial thin film growth~\cite{elder04}, grain growth~\cite{stefanovic06}, eutectic solidification~\cite{elder07}, and dislocation formation and motion~\cite{stefanovic06}, to name a few.  See the related review~\cite{provatas07}. The key idea is that the phase variable describes a coarse-grained temporal average of the number density of atoms and the approach can be related to dynamic density functional theory \cite{backofen07, marconi99}.  This method represents a significant advantage over other atomistic methods, such as molecular dynamics methods where the time steps are constrained by atomic-vibration time scales.	 In more detail, the dimensionless energy is given by the following form~\cite{elder02, elder04, swift77}
	\begin{equation}
E_{\rm pfc}(\phi) = \int_\Omega \left\{ \frac{1}{4}\phi^4 +\frac{1-\varepsilon}{2}\phi^2-\left|\nabla\phi\right|^2 +\frac{1}{2} (\Delta\phi)^2 \right\} d{\bf x}  , 
  \quad 0 < \varepsilon , 
	\label{energy-PFC}
	\end{equation} 
where $\Omega\subset \mathbb{R}^D$, $D = 2$ or 3, $\phi:\Omega\rightarrow \mathbb{R}$ is the atom density field. Typically the parameter $\varepsilon$ which represents a deviation from the melting temperature, satisfies $0< \varepsilon < 1$, though it may be possible that $\varepsilon>1$. In this paper, $\phi$ is assumed to be periodic on the rectangular domain $\Omega$. Quite often, in the physics literature especially, the energy is rewritten as
	\begin{equation}
E_{\rm pfc}(\phi) = \int_\Omega \left\{ \frac{1}{4}\phi^4  - \frac{\varepsilon}{2}\phi^2 +\frac{1}{2}\phi\left(1+\Delta\right)^2 \phi \right\} d{\bf x},
	\label{energy-PFC-alt-1}
	\end{equation} 
where
	\[
\left(1+\Delta\right)^2 \phi =  \left(1+2\Delta+\Delta^2\right) \phi = \phi+2\Delta\phi +\Delta^2\phi .
	\]
The two views of the energy allow us to analyze the convexity structure from different perspectives. In \eqref{energy-PFC}, we view the quadratic term $-\left|\nabla\phi\right|^2$ as destabilizing (concave), and $\frac{1-\varepsilon}{2}\phi^2$ (and all other terms) as stabilizing (convex). This view is valid only when $a:=1-\varepsilon$ is positive, which, as we have said, may be slightly restrictive.  By contrast, in \eqref{energy-PFC-alt-1}, we view $- \frac{\varepsilon}{2}\phi^2$ as destabilizing and all other terms as stabilizing. In either case, the phase field crystal (PFC) equation is defined as
	\[
\partial_t \phi = \Delta\mu, \quad \mu := \delta_\phi E_{\rm pfc} = \phi^3 -\varepsilon\phi +(1+\Delta)^2\phi . 	
	\]
When $\varepsilon<0$, the PFC equation can have solutions that exhibit spatial oscillations, and, typically in 2D, the peaks and valleys of $\phi$ are arranged in a hexagonal pattern. Such solutions are considered to represent ``solid phase" solutions in the model. It is also possible to have ``liquid phase" solutions, which are spatially uniform and constant; and these can even be in coexistence with the solid phase solutions to describe a crystal in equilibrium with its melt. See, for example, the book by Provatas and Elder~\cite{provatas10}.

Alternate lattice structures, such as ``square" symmetry crystal lattices, are possible in 2D solutions. As mentioned in \cite{elder04}, and motivated by the work of~\cite{golovin03}, a different choice of nonlinear term in the PFC model is needed to obtain a square symmetry crystal lattice rather than the usual hexagonal structure.   Specifically, such symmetries can be obtained~\cite{golovin03} by replacing $\phi^4$ in (\ref{energy-PFC}) with $\left| \nabla \phi \right|^4$. (See also~\cite{wu10} for a related method.)  Doing so one obtains an energy of the form  
	\begin{align}
E_{\rm spfc}(\phi) & = \int_\Omega\left\{ \frac{a}{2}\phi^2 +\frac{1}{4} \left| \nabla \phi \right|^4  -\left|\nabla\phi\right|^2 + \frac12 (\Delta\phi)^2 \right\} d{\bf x} 
	\nonumber
	\\
& =  \int_\Omega\left\{ \frac{1}{4} \left| \nabla \phi \right|^4 -\frac{\varepsilon}{2}\phi^2 +  \frac12\phi \left(1+\Delta\right)^2 \phi  \right\} d{\bf x} .
	\label{energy-SPFC} 
	\end{align} 
We observe that there are essential similarities between this energy and the Aviles-Giga-type energy~\cite{aviles96}.  The square phase field crystal (SPFC) equation is given by the following dynamics
	\begin{equation}
	\partial_t \phi =  \Delta \mu \ ,   \quad  \mu := \delta_\phi E_{\rm spfc} =  - \nabla \cdot \left(  | \nabla \phi |^2  \nabla \phi \right) -\varepsilon \phi + \left(1+ \Delta\right)^2 \phi .
  	\label{equation-SPFC}
	\end{equation}
We will  assume for simplicity that $a=1-\varepsilon>0$. In this case, the energy will be bounded from below, a fact that we will prove later. For the standard PFC model and its modified version, there have been extensive numerical works~\cite{baskaran13a, baskaran13b, dong18, hu09, wang10c, wang11a, wise09, zhang13}, \emph{et cetera}, in the existing literature.  Of course, because of its generality, the new SAV approach of Shen \emph{et al.}~\cite{shen18a} could be applied to the PFC and SPFC problems.  For the SPFC equation, few if any simulation results exist, to our knowledge, though a closely related equation is solved in~\cite{golovin03}.

\textcolor{black}{Regarding scaling and parameters, we first note that $\epsilon$ -- which represents the deviation of the temperature from the freezing temperature -- in the present setting is not necessarily a small parameter. However, in the standard PFC literature, the characteristic feature size, that is the unit cell size of the pattern, is generally taken to be $O(1)$ and the domain size, $L$, is typically large, perhaps several hundred unit cell sizes wide. If one rescales the domain so that $L = O(1)$, then the feature size is a small fraction of 1, and a small parameter appears in the equation:
	\[
\partial_\tau \phi = \Delta_z\left( - \sigma^4 \nabla_z \cdot \left(  | \nabla_z \phi |^2  \nabla_z \phi \right) -\varepsilon \phi + \left(1+ \sigma^2\Delta_z\right)^2 \phi\right) , 
	\]
where $\tau$ and $z$ are rescaled time and space coordinates and $\sigma>0$ is a potentially small parameter. In particular, we see that the highest order diffusion term $\sigma^4\Delta_z^2\phi$ and the nonlinear term $- \sigma^4 \nabla_z \cdot \left(  | \nabla_z \phi |^2  \nabla_z \phi \right)$ will always be of the same magnitude, so that the stiffness in the nonlinear term could never be removed via scaling. Implicit treatment of the highest order surface diffusion as well as the nonlinear term is necessary to ensure an energy stability at a theoretical level.}

\textcolor{black}{Regarding the nonlinearity, although the only difference between the standard PFC and SPFC equations is the replacement of $\phi^4$ by $| \nabla \phi |^4$ in the free energy functional, the analysis and numerical approximation of the later are much more challenging, especially when using pseudo-spectral approximations of spatial derivatives. For example, a nonlinear multigrid numerical solver has been very successfully applied to the standard PFC model in the framework of finite differences~\cite{baskaran13a, hu09, wise09}. However, such a solver leads to a fairly poor numerical performance in the computation of the 4-Laplacian problem, due to its highly nonlinear nature. In addition, the $H^{-1}$ gradient flow pattern for the 4-Laplacian term makes the overall system even more difficult to solve. All these well-known difficulties have made a rigorous numerical analysis for the SPFC model an open problem until now.} 

In this article we consider two energy stable schemes for the SPFC equation, both with second order temporal accuracy and Fourier pseudo-spectral spatial discretization. They will differ in construction based only on whether one views $-\left|\nabla\phi\right|^2$ as destabilizing or one views $-\frac{\varepsilon}{2}\phi^2$ as destabilizing. Since the schemes are closely related, we will conduct the detailed convergence analysis only for one and just comment here and there on the differences one would encounter in the analysis of the other. 

The SPFC equation~\eqref{equation-SPFC} is structurally very different from the PFC model, with a much higher nonlinearity.  However, its energy~\eqref{energy-SPFC} can still be decomposed into purely convex and concave parts, so that a first order convex splitting scheme could be appropriately derived. On the other hand, first order temporal accuracy -- especially in the convex splitting setting which is highly dissipative -- is not satisfactory in practical computations, in particular due to the large time scale involved for the SPFC model. Therefore, a second-order-accurate in time, energy stable numerical algorithm is highly desired. Instead of the modified Crank-Nicolson approach for the gradient structure, which has been successfully applied to the Cahn-Hilliard~\cite{chen19a, cheng2018a, cheng16a, diegel17, diegel16, du91, guan17a, guan14b, guo16, han15} and epitaxial thin film equation~\cite{shen12}, we make use of a modified backward differentiation formula (BDF) approach. In more details, we apply the second order BDF concept to derive second order temporal accuracy, but modified so that the concave diffusion term is treated by an explicit extrapolation. Such an explicit treatment for the concave part of the chemical potential ensures the unique solvability of the scheme without sacrificing energy stability. An additional term $A \dt \Delta (\phi^{k+1} - \phi^k)$ is added, which represents a second order Douglas-Dupont-type regularization. Moreover, a careful analysis shows that energy stability is guaranteed for the proposed numerical schemes, provided a mild condition is enforced:
	\[
A \ge  \frac{\varepsilon^2}{16} . 
	\]
See also a related discussion in~\cite{cheng2018c} for the epitaxial thin film growth model. 

In the spatial discretization, we use Fourier pseudo-spectral approximation for its ability to capture more detailed structure with a reduced computational cost. Summation-by-parts formulae and aliasing error control techniques enable us to derive unique solvability and energy stability for the fully discrete numerical scheme. As a result of this discrete energy stability, a uniform-in-time discrete $H^2$ bound for the numerical solution becomes available. With such an $H_N^2$ bound at hand, we are able to derive a discrete $W_N^{1,6}$ bound for the numerical solution, uniform-in-time, with the help of discrete Sobolev embedding in the Fourier pseudo-spectral space. Such an embedding analysis cannot be derived from a straightforward calculation.  Instead, detailed discrete Fourier analyses, combined with certain non-trivial aliasing error estimates, are involved in the derivation. In turn, these preliminary estimates enable one to obtain an optimal rate ($O (\dt^2 + h^m)$) convergence analysis for the proposed numerical scheme, in the $\ell^\infty (0,T; \ell^2) \cap \ell^2 (0,T; H_N^3)$ norm. 

There have been related works of spectral/pseudo-spectral differentiation to various gradient flow models; see the related references~\cite{cheng16a, LiD2016b}, \emph{et cetera}. On the other hand, such a differentiation turns out to be a global operator in space, and this feature leads to great challenges in the numerical implementations, especially in the case of an implicit treatment of nonlinear terms. Moreover, due to the composition of the highly nonlinear 4-Laplacian and the regular Laplacian operators, the numerical implementation becomes even more challenging. As a result, an efficient solver for a regularized p-Laplacian equation, in the discrete $H^{-1}$ space, has to be utilized. In a recent work~\cite{feng2017preconditioned}, a preconditioned steepest descent (PSD) algorithm was proposed for such problems. At each iteration stage, only a purely linear elliptic equation needs to be solved to obtain a search direction, and the numerical efficiency for such an elliptic equation could be greatly improved with the use of FFT-based solvers. In turn, an optimization in the given search direction becomes one-dimensional, with its well-posedness assured by convexity arguments. Moreover, a geometric convergence of such an iteration could be theoretically derived, for both the $L^2$ and $H^{-1}$ gradient flow structures, so that a great improvement of the numerical efficiency is justified. For the $L^2$ gradient flow, the numerical comparison has been reported in~\cite{fengW17c} for the epitaxial thin film model, with an application of the Polak-Ribi\'ere variant of NCG (nonlinear conjugate gradient) method \cite{polak69}, reported in \cite{shen12, wang10a}. For the 4-Laplacian problem in an $H^{-1}$ gradient flow, the PSD solver in the finite difference version has been reported in~\cite{fengW17b} for the functionalized Cahn-Hilliard/Willmore model, while an efficient spectral/pseudo-spectral PSD solver for the composition of 4-Laplacian and regular Laplacian operators has not been available in the existing literature.   
 	
 The outline of the paper is given as follows. In Section~\ref{sec:numerical scheme} we present the numerical scheme. First we review the Fourier pseudo-spectral approximation in space and recall an aliasing error control technique. Then we formulate the proposed numerical scheme. Subsequently, the unique solvability and energy stability analyses are provided in Section~\ref{sec:stability}, and an optimal rate convergence analysis is established in Section~\ref{sec:convergence}. In addition, the details of the preconditioned steepest descent (PSD) solver are outlined in Section~\ref{sec:PSD}. Some numerical results are presented in Section~\ref{sec:numerical results}.  Finally, some concluding remarks are made in Section~\ref{sec:conclusion}.

	\section{The numerical scheme}
	\label{sec:numerical scheme}
	
\subsection{Fourier pseudo-spectral approximations}

The Fourier pseudo-spectral method is also referred as the Fourier collocation spectral method. It is closely related to the Fourier spectral method, but complements the basis by an additional pseudo-spectral basis, which allows to represent functions on a quadrature grid. This simplifies the evaluation of certain operators, and can considerably speed up the calculation when using fast algorithms such as the fast Fourier transform (FFT); see the related descriptions in~\cite{Boyd2001, cheng2015fourier, cheng16b, cheng16a, gottlieb12a, gottlieb12b, HGG2007, zhangC18a, zhangC17a}.

To simplify the notation in our pseudo-spectral analysis, we assume that the domain is given by $\Omega = (0,1)^3$, $N_x = N_y = N_z =: N\in\mathbb{N}$ and $N \cdot h = 1$. We further assume that $N$ is odd:
	\[
N = 2K+1, \quad \mbox{for some} \ K\in\mathbb{N}.
	\]
The analyses for more general cases are a bit more tedious, but can be carried out without essential difficulty. The spatial variables are evaluated on the standard 3D numerical grid $\Omega_N$, which is defined by grid points $(x_i, y_j, z_k)$, with $x_i = i h$, $y_j=jh$, $z_k = k h$, $0 \le i , j, k \le 2K +1$. This description for three-dimensional mesh ($d=3$) can here and elsewhere be trivially modified for the two-dimensional case ($d=2$).

We define the grid function space
	\begin{equation}
\mathcal{G}_N := \left\{ f:\mathbb{Z}^3 \to \mathbb{R} \ \middle| \ f \ \mbox{is $\Omega_N$-periodic} \right\} .
	\end{equation}
Given any periodic grid functions $f,g\in\mathcal{G}_N$, the $\ell^2$ inner product and norm are defined as
	\begin{eqnarray}
 \left\langle f , g \right\rangle  := h^3 \sum_{i,j,k=0}^{N -1}   f_{i,j,k}\cdot g_{i,j,k} , \quad \left\| f \right\|_2 := \sqrt{ \left\langle f , f \right\rangle } .
	\label{spectral-coll-inner product-1}
	\end{eqnarray}
The zero-mean grid function subspace is denoted $\mathring{\mathcal{G}}_N := \left\{ f\in \mathcal{G}_N \ \middle| \  \langle f, 1\rangle =:  \overline{f}  = 0\right\}$.  For  $f\in \mathcal{G}_N$, we have the discrete Fourier expansion
	\begin{equation}
f_{i,j,k} = \sum_{\ell,m,n=-K}^{K} \hat{f}_{\ell,m,n}^N \exp \left( 2 \pi {\rm i} ( \ell x_i + m y_j + n z_k ) \right) ,
	\label{spectral-coll-1}
	\end{equation}
where the discrete Fourier coefficients are given by 
	\begin{equation}
\hat{f}_{\ell,m,n}^N := h^3\sum_{i,j,k = 0}^{N-1} f_{i,j,k}\exp\left(-2\pi {\rm i} \left(\ell x_i + m x_j + n x_k \right)\right) . 
	\end{equation}
The collocation Fourier spectral first and second order \emph{derivatives} of $f$ are defined as
	\begin{eqnarray}
{\cal D}_x f_{i,j,k} := \sum_{\ell,m,n = -K}^{K}  \left( 2 \pi {\rm i} \ell \right) \hat{f}_{\ell,m,n}^N  \exp \left( 2 \pi {\rm i} ( \ell x_i + m y_j + n z_k ) \right) ,
	\label{spectral-coll-2-1}
	\\
{\cal D}_x^2 f_{i,j,k} := \sum_{\ell,m,n = -K}^{K}   \left( - 4 \pi^2 \ell^2 \right) \hat{f}_{\ell,m,n}^N \exp \left( 2 \pi {\rm i} ( \ell x_i + m y_j + n z_k) \right) .
	\label{spectral-coll-2-3}
	\end{eqnarray}
The differentiation operators in the $y$ and $z$ directions, ${\cal D}_y$, ${\cal D}_y^2$, ${\cal D}_z$ and ${\cal D}_z^2$ can be defined in the same fashion. In turn, the discrete Laplacian, gradient and divergence operators are given by
	\begin{equation}
\Delta_N f :=  \left( {\cal D}_x^2  + {\cal D}_y^2 + {\cal D}_z^2 \right) f , \quad \nabla_N f := \left(
	\begin{array}{c}
{\cal D}_x f
	\\
{\cal D}_y f
	\\
{\cal D}_z f
	\end{array}
\right),  \quad  \nabla_N \cdot \left(
	\begin{array}{c}
f _1
	\\
f _2
	\\
f_3
	\end{array}
\right)  := {\cal D}_x f_1 + {\cal D}_y f_2 + {\cal D}_z f_3 ,
	\label{spectral-coll-3}
	\end{equation}
at the point-wise level. It is straightforward to verify that
	\begin{equation}
\nabla_N \cdot \nabla_N f = \Delta_N f .
	\label{spectral-coll-4-a}
	\end{equation}
See the derivations in the related references~\cite{Boyd2001, canuto82,
Gottlieb1977}.

	\begin{defi}
Suppose that the grid function $f\in\mathcal{G}_N$ has the discrete Fourier expansion (\ref{spectral-coll-1}).  Its spectral extension into the trigonometric polynomial space $\mathcal{P}_K$ (the space of trigonometric polynomials of degree at most $K$) is defined as
	\begin{equation}
f_S (x,y,z)  = \sum_{\ell,m,n=-K}^{K}   \hat{f}_{\ell,m,n}^N \exp \left( 2 \pi {\rm i} ( \ell x + m y + n z) \right) .
	\label{spectral-coll-projection-2}
	\end{equation}
We write $S_N(f) = f_S$ and call $S_N:\mathcal{G}_N \to \mathcal{P}_K$ the spectral interpolation operator. Suppose $g\in C_{\rm per}(\Omega,\mathbb{R})$. We define the grid projection $Q_N: C_{\rm per}(\Omega,\mathbb{R})\to\mathcal{G}_N$ via
	\begin{equation}
Q_N(g)_{i,j,k} := g(x_i,y_j,z_k),
	\end{equation}
The resultant grid function may, of course, be expressed as a discrete Fourier expansion:
	\[
Q_N(g)_{i,j,k} = \sum_{\ell,m,n=-K}^{K}   \widehat{Q_N(g)}_{\ell,m,n}^N \exp \left( 2 \pi {\rm i} ( \ell x_i + m y_j + n z_k)\right) .
	\]
We define the de-aliasing operator $R_N :C_{\rm per}(\Omega,\mathbb{R}) \to \mathcal{P}_K$ via $R_N := S_N(Q_N)$. In other words,
	\begin{eqnarray}
R_N(g)(x,y,z)  = \sum_{\ell,m,n=-K}^{K}   \widehat{Q_N(g)}_{\ell,m,n}^N \exp \left( 2 \pi {\rm i} ( \ell x + m y + n z)\right) .
    \label{spectral-coll-projection-3}
	\end{eqnarray}
Finally, for any $g\in L^2(\Omega,\mathbb{R})$, we define the (standard) Fourier projection operator $P_N:L^2(\Omega,\mathbb{R}) \to {\mathcal P}_K$ via
	\[
P_N(g) (x,y,z) = \sum_{\ell,m,n = -K}^K \hat{g}_{\ell,m,n} \exp \left( 2 \pi {\rm i} ( \ell x + m y + n z)\right),
	\]
where
	\[
\hat{g}_{\ell,m,n} = \int_\Omega g(x,y,z) \exp\left(- 2\pi{\rm i} \left(\ell x+m y+n z \right)\right)  d\x,
	\]
are the (standard) Fourier coefficients.
	\end{defi}
	
	\begin{rem}
Note that, in general, for $g\in C_{\rm per}(\Omega,\mathbb{R})$, $P_N(g) \ne R_N(g)$, and, in particular,
	\[
\hat{g}_{\ell,m,n} \ne \widehat{Q_N(g)}_{\ell,m,n}^N .
	\]
However, if $g\in\mathcal{P}_K$ to begin with, then $\hat{g}_{\ell,m,n} = \widehat{Q_N(g)}_{\ell,m,n}^N$. In other words, $R_N :\mathcal{P}_K \to \mathcal{P}_K$ is the identity operator.
	\end{rem}

To overcome a key difficulty associated with the $H^m$ bound of the nonlinear term  obtained by collocation interpolation, the following lemma is introduced.
	\begin{lem}
	\label{lemma:aliasing error}
Suppose that $m$ and $K$ are non-negative integers, and,  as before, assume that $N = 2K+1$. For any $\varphi \in {\cal P}_{mK}$ in $\mathbb{R}^d$, we have the estimate
	\begin{equation}
\left\| R_N (\varphi) \right\|_{H^r}  \le  m^{\frac{d}{2}}  \left\|  \varphi \right\|_{H^r},
	\label{spectral-coll-projection-4}
	\end{equation}
for any non-negative integer $r$.
	\end{lem}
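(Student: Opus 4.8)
The plan is to work directly in Fourier space, comparing the aliased coefficients $\widehat{Q_N(\varphi)}^N_{\ell,m,n}$ with the true Fourier coefficients $\hat\varphi_{\ell,m,n}$ of $\varphi$, and to control the difference via the aliasing relation. Since $\varphi\in\mathcal{P}_{mK}$, its continuous Fourier expansion is a finite sum over frequencies $(\ell_1,\ell_2,\ell_3)$ with $|\ell_i|\le mK$. When such a $\varphi$ is sampled on the $N$-point grid in each direction and then interpolated by $R_N$, each high frequency is folded back into the base band $\{-K,\dots,K\}$: a frequency $\ell_i$ is identified with $\ell_i \bmod N$ shifted into $[-K,K]$. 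Concretely, for a one-dimensional frequency $\mu\in[-K,K]$, the aliased coefficient satisfies
	\[
\widehat{Q_N(\varphi)}^N_{\mu} = \sum_{\substack{|\ell|\le mK \\ \ell \equiv \mu \!\!\pmod N}} \hat\varphi_{\ell},
	\]
and in $d$ dimensions this becomes a product/sum over the three coordinate directions. The key combinatorial observation is that, for $\varphi\in\mathcal{P}_{mK}$ in one dimension, the number of integers $\ell$ with $|\ell|\le mK$ congruent to a fixed $\mu$ modulo $N=2K+1$ is at most $m$ (the residues run over an interval of length $2mK+1 \le m(2K+1)$). Hence each base-band coefficient is a sum of at most $m^d$ true coefficients, over disjoint frequency sets as $\mu$ varies.

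With this structure in hand, I would estimate the $H^r$ norm as follows. Writing $\|R_N(\varphi)\|_{H^r}^2 = \sum_{|\mu_i|\le K} (1+|\mu|^2)^r \big|\widehat{Q_N(\varphi)}^N_{\mu}\big|^2$ (using whatever equivalent weight $(1+4\pi^2|\mu|^2)^r$ the paper's convention dictates; the argument is insensitive to the exact constant), I apply Cauchy--Schwarz to the sum of at most $m^d$ terms defining each aliased coefficient:
	\[
\big|\widehat{Q_N(\varphi)}^N_{\mu}\big|^2 \le m^d \sum_{\substack{\ell \equiv \mu \\ |\ell_i|\le mK}} |\hat\varphi_{\ell}|^2 .
	\]
The remaining point is that for every frequency $\ell$ appearing on the right (with $|\ell_i|\le mK$) and its alias $\mu$ (with $|\mu_i|\le K$), one has $(1+|\mu|^2)^r \le (1+|\ell|^2)^r$, since $|\mu_i|\le|\ell_i|$ componentwise — the folding never increases the magnitude of a frequency. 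Summing over $\mu$ and using that the frequency sets $\{\ell : \ell\equiv\mu\}$ are disjoint and exhaust all of $\{|\ell_i|\le mK\}$, the double sum collapses to $m^d \sum_{|\ell_i|\le mK} (1+|\ell|^2)^r |\hat\varphi_\ell|^2 = m^d \|\varphi\|_{H^r}^2$, which gives \eqref{spectral-coll-projection-4} after taking square roots.

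The main obstacle, and the step deserving the most care, is the bookkeeping in the aliasing identity itself: verifying that the base-band coefficient is exactly the sum over the congruence class, that at most $m$ frequencies per direction fall into each class (so $m^d$ in $d$ dimensions), and — crucially for the norm comparison — that the representative $\mu$ chosen in $[-K,K]$ always satisfies $|\mu_i|\le|\ell_i|$ for each $\ell$ in its class. This last monotonicity fact is what makes the Sobolev weight behave; it follows because reducing $\ell_i$ modulo $N$ into the symmetric range $[-K,K]$ can only move it closer to the origin when $|\ell_i|\le mK$, but it should be checked carefully at the boundary cases. Once the aliasing combinatorics is set up cleanly, the Cauchy--Schwarz and summation steps are routine. (For $d=2$ one simply drops the third coordinate throughout, obtaining the constant $m$ in place of $m^{3/2}$.)
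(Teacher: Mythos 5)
Your argument is correct and is essentially the standard aliasing-decomposition proof: the paper itself gives no proof of this lemma, deferring to the cited references (E for $r=0$, Gottlieb--Wang for $r\ge 1$), and those references proceed exactly as you do --- folding frequencies into the base band, counting at most $m$ representatives per congruence class per direction, applying Cauchy--Schwarz, and using that reduction modulo $N$ into $[-K,K]$ never increases $|\ell_i|$ so the Sobolev weight is controlled. The only caveat is the degenerate case $m=0$, where the stated bound trivially fails for nonzero constants; this is an artifact of the lemma's phrasing rather than a gap in your reasoning.
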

The case of $r=0$ was proven in Weinan E's earlier papers~\cite{E92, E93}.  The case of $r \ge 1$ was analyzed in a recent article by Gottlieb and Wang~\cite{gottlieb12b}. The proof of the following estimate can be found in~\cite{canuto82}.

	\begin{prop}
Suppose that $\phi \in H_{\rm per}^m(\Omega)$ and $m> \frac{d}{2}$. Then, there is some constant $C>0$, such that
	\begin{equation}
  \| \phi - R_N(\phi) \|_{H^k} \leq C \| \phi \|_{H^m} h^{m-k} , \quad \mbox{for} \quad  0 \le k \le m .  
   \label{spectral-approximation} 
	\end{equation} 
	\end{prop}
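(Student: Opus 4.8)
The plan is to route the estimate through the standard Fourier truncation (projection) operator $P_N$, writing
\[
\phi - R_N(\phi) = \bigl( \phi - P_N(\phi) \bigr) + \bigl( P_N(\phi) - R_N(\phi) \bigr) ,
\]
and bounding the two summands separately: the first is the genuine spectral truncation error, and the second is the aliasing error committed by sampling $\phi$ on the grid. Because $m > \frac{d}{2}$, the Sobolev embedding $H_{\rm per}^m(\Omega) \hookrightarrow C_{\rm per}(\Omega,\mathbb{R})$ makes the grid projection $Q_N(\phi)$, and hence $R_N(\phi) = S_N(Q_N(\phi))$, well defined. Since $P_N(\phi)$ and $R_N(\phi)$ both lie in $\mathcal{P}_K$, the whole computation can be carried out mode by mode: for a trigonometric polynomial, $\| \, \cdot \, \|_{H^k}^2$ equals, up to fixed constants, the sum of $(1 + |\xi|^2)^{k} | \widehat{(\cdot)}_{\xi} |^2$ over the wavenumbers $\xi \in \mathbb{Z}^d$ (here and below $|\xi|$ is the Euclidean length and $\|\xi\|_\infty$ the max-norm of the multi-index).

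The truncation term is routine. On the discarded set $\|\xi\|_\infty > K$ one has $(1 + |\xi|^2)^{k} \le C K^{-2(m-k)} (1 + |\xi|^2)^{m}$, so Parseval's identity immediately yields $\| \phi - P_N(\phi) \|_{H^k} \le C K^{-(m-k)} \| \phi \|_{H^m} \le C h^{m-k} \| \phi \|_{H^m}$, using $N = 2K + 1$ and $N h = 1$.

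The aliasing term carries the real content. The aliasing (Poisson summation) identity states that for a retained wavenumber $\xi$, $\|\xi\|_\infty \le K$, the discrete Fourier coefficient satisfies $\widehat{Q_N(\phi)}_{\xi}^N = \sum_{p \in \mathbb{Z}^d} \hat\phi_{\xi + N p}$, whence $\widehat{Q_N(\phi)}_{\xi}^N - \hat\phi_{\xi} = \sum_{p \ne 0} \hat\phi_{\xi + N p}$ and
\[
\| P_N(\phi) - R_N(\phi) \|_{H^k}^2 \le C \sum_{\| \xi \|_\infty \le K} (1 + |\xi|^2)^{k} \Bigl| \sum_{p \ne 0} \hat\phi_{\xi + N p} \Bigr|^2 .
\]
Inserting the factors $(1 + |\xi + N p|^2)^{\pm m/2}$ into the inner sum and applying the Cauchy--Schwarz inequality, one reduces matters to controlling $(1 + |\xi|^2)^{k} \sum_{p \ne 0} (1 + |\xi + N p|^2)^{-m}$; since $\|\xi\|_\infty \le K$ forces $1 + |\xi|^2 \le C N^2$, while $|\xi + N p| \ge c N |p|$ for $p \ne 0$ with a dimensional constant $c > 0$ (for $d \le 3$), this quantity is at most $C N^{2k} \cdot N^{-2m} \sum_{p \ne 0} |p|^{-2m} \le C N^{-2(m-k)}$, the series converging precisely because $2 m > d$. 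Consequently,
\[
\| P_N(\phi) - R_N(\phi) \|_{H^k}^2 \le C N^{-2(m-k)} \sum_{\| \xi \|_\infty \le K} \, \sum_{p \ne 0} (1 + |\xi + N p|^2)^{m} \, | \hat\phi_{\xi + N p} |^2 \le C N^{-2(m-k)} \| \phi \|_{H^m}^2 ,
\]
where the last step uses that the pairs $(\xi, p)$ with $\|\xi\|_\infty \le K$ and $p \ne 0$ parametrize $\mathbb{Z}^d \setminus \{ -K, \dots, K \}^d$ bijectively (the base-$N$ expansion with digits centered at zero), so the double sum is dominated by the full weighted $\ell^2$ norm of the Fourier coefficients of $\phi$, i.e.\ by $\| \phi \|_{H^m}^2$. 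Combining with $N^{-1} = h$ and the truncation bound completes the proof.

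The step I expect to require the most care is exactly this aliasing bookkeeping: establishing the uniform lower bound $|\xi + N p| \ge c N |p|$ for $p \ne 0$ (which simultaneously yields the bijective base-$N$ indexing of the non-retained frequencies), and verifying that the residual series $\sum_{p \ne 0} (1 + |\xi + N p|^2)^{-m}$ converges with a constant independent of $\xi$ and of $N$. Both hinge on the hypothesis $m > \frac{d}{2}$, equivalently $2 m > d$. By contrast, the truncation estimate and the concluding Parseval assembly are entirely standard.
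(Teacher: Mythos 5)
Your proof is correct. Note that the paper itself does not prove this proposition; it simply cites the literature (the reference \cite{canuto82}), so there is no in-paper argument to compare against. What you have written is precisely the classical argument underlying that citation: split $\phi - R_N(\phi)$ into the truncation error $\phi - P_N(\phi)$ and the aliasing error $P_N(\phi) - R_N(\phi)$, bound the former by Parseval on the discarded modes, and control the latter via the folding identity $\widehat{Q_N(\phi)}_{\xi}^N = \sum_{p}\hat\phi_{\xi+Np}$ together with a weighted Cauchy--Schwarz step. The two points you flag as delicate are handled correctly: since $K=(N-1)/2$, each nonzero component satisfies $|\xi_i+Np_i|\ge N|p_i|-K\ge \tfrac{N}{2}|p_i|$, which gives both the lower bound $|\xi+Np|\ge cN|p|$ and the bijective balanced-residue indexing of $\mathbb{Z}^d\setminus\{-K,\dots,K\}^d$; and the series $\sum_{p\ne 0}|p|^{-2m}$ converges exactly because $2m>d$, which is also what legitimizes the pointwise aliasing identity (absolute convergence of the Fourier series via $H^m\hookrightarrow C_{\rm per}$). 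The argument is complete and yields the stated rate with $h=N^{-1}$.
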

 
 We also note the following identity:
 
	\begin{prop}
Suppose that $\phi\in C_{\rm per}(\Omega)$. Then
	\begin{equation} 
{\cal D}_x Q_N(\phi) = Q_N(\partial_x R_N(\phi) ).
	\label{interpolationl-2}
	\end{equation}
Similar identities are available to the higher order derivatives.
	\end{prop}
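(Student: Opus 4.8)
The plan is pure definition-chasing: both sides of \eqref{interpolationl-2} are grid functions in $\mathcal{G}_N$, so it suffices to show they take the same value at every grid point $(x_i,y_j,z_k)$, or equivalently that they have the same discrete Fourier coefficients. First I would recall from \eqref{spectral-coll-projection-2} that $R_N(\phi) = S_N(Q_N(\phi))$ is the trigonometric polynomial in $\mathcal{P}_K$ whose (continuous) Fourier coefficients are exactly the discrete Fourier coefficients $\widehat{Q_N(\phi)}_{\ell,m,n}^N$ of the grid function $Q_N(\phi)$. Differentiating this finite sum termwise in $x$ gives another element of $\mathcal{P}_K$, namely $\partial_x R_N(\phi)(x,y,z) = \sum_{\ell,m,n=-K}^{K} (2\pi{\rm i}\,\ell)\,\widehat{Q_N(\phi)}_{\ell,m,n}^N \exp\left(2\pi{\rm i}(\ell x + m y + n z)\right)$.

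Next I would evaluate at the grid: applying $Q_N$ to $\partial_x R_N(\phi)$ just means setting $(x,y,z)=(x_i,y_j,z_k)$ in the display above, which yields $Q_N(\partial_x R_N(\phi))_{i,j,k} = \sum_{\ell,m,n=-K}^{K}(2\pi{\rm i}\,\ell)\,\widehat{Q_N(\phi)}_{\ell,m,n}^N \exp\left(2\pi{\rm i}(\ell x_i + m y_j + n z_k)\right)$. On the other hand, the definition \eqref{spectral-coll-2-1} of ${\cal D}_x$ applied to the grid function $Q_N(\phi)$, whose discrete Fourier expansion has coefficients $\widehat{Q_N(\phi)}_{\ell,m,n}^N$, produces precisely the same expression. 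Hence the two grid functions agree pointwise, which is the claim; the asserted identities for ${\cal D}_x^2$, ${\cal D}_y$, ${\cal D}_y^2$, ${\cal D}_z$, ${\cal D}_z^2$ follow identically, replacing the Fourier symbol $2\pi{\rm i}\,\ell$ by $-4\pi^2\ell^2$, $2\pi{\rm i}\,m$, and so on.

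The only step needing a word of justification — and the closest thing to an obstacle — is the identification of the continuous Fourier coefficients of $R_N(\phi)$ with the discrete Fourier coefficients of $Q_N(\phi)$. This is exactly the content of the remark that $R_N$ restricted to $\mathcal{P}_K$ is the identity: since $R_N(\phi)\in\mathcal{P}_K$ and $Q_N(R_N(\phi)) = Q_N(S_N(Q_N(\phi)))$ reproduces the sampled data $Q_N(\phi)$ at every grid point, and since the discrete Fourier transform is a bijection between $\Omega_N$-periodic grid functions and coefficient arrays indexed by $-K\le \ell,m,n\le K$, it follows that $\widehat{R_N(\phi)}_{\ell,m,n} = \widehat{Q_N(\phi)}_{\ell,m,n}^N$. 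With this in hand the proof reduces to the one-line comparison of the two finite sums above; no regularity of $\phi$ beyond continuity (so that the pointwise evaluations defining $Q_N$ are meaningful) is used.
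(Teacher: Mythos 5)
Your proof is correct and is exactly the definition-chasing the paper relies on: the paper states this proposition without proof, treating it as immediate from the definitions \eqref{spectral-coll-2-1} and \eqref{spectral-coll-projection-3}, and your comparison of the two finite Fourier sums is precisely that argument. The only minor remark is that the identification $\widehat{R_N(\phi)}_{\ell,m,n}=\widehat{Q_N(\phi)}_{\ell,m,n}^N$ you justify at the end is already the literal content of the definition \eqref{spectral-coll-projection-3}, so no appeal to the DFT bijection is needed.
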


In addition, we introduce the discrete  fractional operator $(-\Delta_N)^\gamma$ (with $\gamma >0$): 
	\begin{equation}
(-\Delta_N)^\gamma f_{i,j,k} := \sum_{\ell,m,n=-K}^{K} \left( 4 \pi^2 ( \ell^2 + m^2 + n^2 ) \right)^\gamma \hat{f}_{\ell,m,n}^N  \exp \left( 2 \pi {\rm i} ( \ell x_i + m y_j + n z_k) \right) ,
	\label{spectral-coll-4-b}
	\end{equation}
for a grid function $f$ with the discrete Fourier expansion as (\ref{spectral-coll-1}). Similarly, for a grid function $f\in\mathring{\mathcal{G}}_N$ of  (discrete) mean zero, a discrete version of the operator $(-\Delta)^{-\gamma}$ may be defined as
	\begin{equation}
(-\Delta_N)^{-\gamma} f_{i,j,k} := \sum_{\substack{\ell,m,n =-K \\ 
(\ell,m,n) \ne {\bf 0}}}^K \left( 4 \pi^2 ( \ell^2 + m^2 + n^2 ) \right)^{-\gamma} \hat{f}_{\ell,m,n}^N \exp \left( 2 \pi {\rm i} ( \ell x_i + m y_j + n z_k) \right).
	\label{spectral-coll-5}
	\end{equation}
Observe that, in this way of defining the inverse operator, the result is a periodic grid function of zero mean, \emph{i.e}, $(-\Delta_N)^{-\gamma} f\in\mathring{\mathcal{G}}_N$.

Detailed calculations show that the following summation-by-parts formulas
are valid (see the related discussions in~\cite{chen12, chen14,   gottlieb12a, gottlieb12b}): for any periodic grid functions $f,g\in\mathcal{G}_N$,
	\begin{equation}
\left\langle f ,  \Delta_N  g  \right\rangle  = - \left\langle \nabla_N f ,  \nabla_N g   \right\rangle  ,    \quad \left\langle f ,  \Delta_N^2  g  \right\rangle =  \left\langle \Delta_N f ,  \Delta_N g   \right\rangle ,   \quad \left\langle f ,  \Delta_N^3  g  \right\rangle =  - \left\langle \nabla_N \Delta_N f ,  \nabla_N \Delta_N g   \right\rangle . 
	\label{spectral-coll-inner product-3}
	\end{equation}

Since the SPFC equation (\ref{equation-SPFC}) is an $H^{-1}$ gradient flow, we need  a discrete version of the norm $\| \cdot \|_{H^{-1}}$ defined on $\mathring{\mathcal G}_N$. For any $f, g \in \mathring{\mathcal G}_N$, we define
\begin{eqnarray}
  \langle f,  g \rangle_{-1,N} := \left\langle f ,  ( - \Delta_N )^{-1} g  \right\rangle
  = \left\langle ( - \Delta_N )^{-\frac12}  f , ( - \Delta_N )^{-\frac12} g   \right\rangle, 
  \label{spectral-coll-inner product-6}
\end{eqnarray} 
so that the $\| \cdot \|_{-1,N}$ norm could be introduced as 
	\begin{equation}
\| f \|_{-1,N} := \sqrt{ \langle f , f \rangle_{-1,N} } = \|  ( - \Delta_N )^{-\frac12}  f  \|_2 .
	\label{spectral-coll-inner product-5}
	\end{equation}


In addition to the standard $\ell^2$ norm, we also introduce the $\ell^p$, $1\le p <\infty$, and $\ell^\infty$ norms for a grid function $f\in\mathcal{G}_N$:
\begin{equation}
 \nrm{f}_{\infty} := \max_{i,j,k} |f_{i,j,k}| ,   \qquad
 \nrm{f}_{p}  := \Bigl( h^3\sum_{i,j,k=0}^{N-1} |f_{i,j,k} |^p \Bigr)^{\frac{1}{p}} , \quad 1\leq p < \infty.  \label{spectral-defi-Lp}
\end{equation}
The discrete $H^1$ and $H^2$ norms are introduced as
\begin{equation} 
  \| f \|_{H_N^1}^2 = \| f \|_2^2 + \| \nabla_N f \|_2^2 ,  \quad 
  \| f \|_{H_N^2}^2 = \| f \|_{H_N^1}^2 + \| \Delta_N f \|_2^2 . 
   \label{spectral-defi-Hm}
\end{equation}
For any periodic grid function $\phi\in\mathcal{G}_N$, the discrete SPFC energy is defined as
	\be
E_N (\phi) := \frac14 \| \nabla_N \phi \|_4^4 + \frac{a}{2} \| \phi \|_2^2 - \| \nabla_N \phi \|_2^2 + \frac12 \nrm{\Delta_N \phi}_2^2 .
	\label{energy-discrete-spectral}
	\ee
	
The inequalities in the next lemma will play an important role in the convergence analysis. 

\begin{lem}  \label{lem: inequality} 
  For any $f \in  \mathcal{G}_N$, we have 
\begin{eqnarray} 
    \| \nabla_N f \|_2 \le \| f \|_2^{2/3} \cdot \| \nabla_N \Delta_N f \|_2^{1/3} ,   \quad 
   \| \Delta_N f \|_2 \le \| f \|_2^{1/3} \cdot \| \nabla_N \Delta_N f \|_2^{2/3} . 
   \label{inequality-0-1}    
\end{eqnarray} 
\end{lem}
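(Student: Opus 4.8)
\emph{Proof proposal.} The two estimates are discrete Gagliardo--Nirenberg interpolation inequalities, and the plan is to obtain them directly from the summation-by-parts formulas \eqref{spectral-coll-inner product-3} and the Cauchy--Schwarz inequality, without ever touching discrete Fourier coefficients. First I would record two elementary ``one-step'' bounds. Taking $g=f$ in the first identity of \eqref{spectral-coll-inner product-3} gives $\|\nabla_N f\|_2^2 = -\langle f,\Delta_N f\rangle$, and Cauchy--Schwarz on the right-hand side yields $\|\nabla_N f\|_2^2 \le \|f\|_2\,\|\Delta_N f\|_2$. Likewise, applying the same identity with $\Delta_N f$ in the first argument and $f$ in the second gives $\|\Delta_N f\|_2^2 = \langle \Delta_N f,\Delta_N f\rangle = -\langle \nabla_N \Delta_N f,\nabla_N f\rangle$, so that Cauchy--Schwarz produces $\|\Delta_N f\|_2^2 \le \|\nabla_N \Delta_N f\|_2\,\|\nabla_N f\|_2$.

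Second, I would interpolate between these. From the first bound, $\|\nabla_N f\|_2 \le \|f\|_2^{1/2}\|\Delta_N f\|_2^{1/2}$; substituting this into the second bound gives $\|\Delta_N f\|_2^2 \le \|\nabla_N \Delta_N f\|_2\,\|f\|_2^{1/2}\|\Delta_N f\|_2^{1/2}$, hence $\|\Delta_N f\|_2^{3/2} \le \|f\|_2^{1/2}\|\nabla_N \Delta_N f\|_2$, which is the second claimed inequality after raising to the power $2/3$. Feeding that estimate back into the first bound, $\|\nabla_N f\|_2^2 \le \|f\|_2\,\|\Delta_N f\|_2 \le \|f\|_2^{4/3}\|\nabla_N \Delta_N f\|_2^{2/3}$, which is the first claimed inequality. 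The only point that needs a word of care is the degenerate case $\Delta_N f = 0$ (equivalently, $f$ a constant grid function), where dividing by $\|\Delta_N f\|_2^{1/2}$ is not allowed; but then $\nabla_N f = 0$ and $\nabla_N \Delta_N f = 0$ as well, so both inequalities hold trivially.

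There is no genuine obstacle here: the argument is routine once the correct two summation-by-parts reductions are chosen, and the only ``hard part'' is the bookkeeping of the exponents $1/3, 2/3, 3/2$. As an alternative worth mentioning, one may instead pass to discrete Fourier space: with $\mu_{\ell,m,n} := 4\pi^2(\ell^2+m^2+n^2)\ge 0$ and $a_{\ell,m,n} := |\hat{f}_{\ell,m,n}^N|^2 \ge 0$, Parseval's identity together with \eqref{spectral-coll-inner product-3} shows that $\|f\|_2^2$, $\|\nabla_N f\|_2^2$, $\|\Delta_N f\|_2^2$ and $\|\nabla_N \Delta_N f\|_2^2$ coincide with $\sum_{\ell,m,n} a$, $\sum_{\ell,m,n}\mu a$, $\sum_{\ell,m,n}\mu^2 a$ and $\sum_{\ell,m,n}\mu^3 a$, respectively; the two inequalities then reduce to $\sum \mu a \le (\sum a)^{2/3}(\sum \mu^3 a)^{1/3}$ and $\sum \mu^2 a \le (\sum a)^{1/3}(\sum \mu^3 a)^{2/3}$, which follow from H\"older's inequality applied to the pointwise factorizations $\mu a = a^{2/3}(\mu^3 a)^{1/3}$ and $\mu^2 a = a^{1/3}(\mu^3 a)^{2/3}$ with conjugate exponent pairs $(3/2,3)$ and $(3,3/2)$. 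I would present the summation-by-parts version as the primary proof, since it uses only tools already introduced in the excerpt.
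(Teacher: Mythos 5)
Your proposal is correct and follows essentially the same route as the paper: both start from the two summation-by-parts reductions $\|\nabla_N f\|_2^2 \le \|f\|_2\|\Delta_N f\|_2$ and $\|\Delta_N f\|_2^2 \le \|\nabla_N f\|_2\|\nabla_N\Delta_N f\|_2$ and bootstrap them, the only (immaterial) difference being that you derive the second inequality first and feed it back, whereas the paper self-substitutes to get the first inequality first. Your explicit treatment of the degenerate case $\Delta_N f = 0$ is a small point of care the paper omits.
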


\begin{proof} 
An application of the summation by parts formula~\eqref{spectral-coll-inner product-3} gives 
\begin{equation} 
  \| \nabla_N f \|_2^2 = - \langle f , \Delta_N f \rangle \le \| f \|_2 \cdot \| \Delta_N f \|_2 . \label{inequality-1} 
 \end{equation} 
 Meanwhile, another summation by parts formula reveals that 
 \begin{equation} 
  \| \Delta_N f \|_2^2 = - \langle \nabla_N f , \nabla_N \Delta_N f \rangle \le \| \nabla_N f \|_2 \cdot \| \nabla_N \Delta_N f \|_2 . \label{inequality-2} 
 \end{equation}  
 Therefore, a combination of~\eqref{inequality-1} and \eqref{inequality-2} leads to 
 \begin{align} 
   \| \nabla_N f \|  \le  \| f \|_2^{1/2} \cdot \| \Delta_N f \|_2^{1/2} 
     & \le  \| f \|_2^{1/2} \cdot ( \| \nabla_N f \|_2^{1/2} \cdot \| \nabla_N \Delta_N f \|_2^{1/2}  )^{1/2}
	\nonumber 
	\\
&= \| f \|_2^{1/2} \cdot \| \nabla_N f \|_2^{1/4} \cdot \| \nabla_N \Delta_N f \|_2^{1/4} ,    
	\label{inequality-3-1} 
	\end{align}  
which in turn results in 
	\begin{equation} 
\| \nabla_N f \|^{3/4} \le  \| f \|_2^{1/2} \cdot \| \nabla_N \Delta_N f \|_2^{1/4} ,   
   \quad \mbox{i.e.,}  \quad  \| \nabla_N f \| \le  \| f \|_2^{2/3} \cdot \| \nabla_N \Delta_N f \|_2^{1/3} . 
	\label{inequality-3-2} 
	\end{equation}     
 This finishes the proof of the first inequality in~\eqref{inequality-0-1}. For the second part, we make use of the preliminary estimate~\eqref{inequality-2}, and substitute the derived inequality in~\eqref{inequality-3-2}: 
 \begin{eqnarray} 
  \| \Delta_N f \|_2  \le \| \nabla_N f \|_2^{1/2} \cdot \| \nabla_N \Delta_N f \|_2^{1/2} 
 &\le& (\| f \|_2^{2/3} \cdot \| \nabla_N \Delta_N f \|_2^{1/3} ) ^{1/2} \cdot \| \nabla_N \Delta_N f \|_2^{1/2}  \nonumber 
\\
  &=& 
  \| f \|_2^{1/3} \cdot \| \nabla_N \Delta_N f \|_2^{2/3} , 
  \label{inequality-4} 
 \end{eqnarray}     
 so that the second inequality in~\eqref{inequality-0-1} is valid. This completes the proof for Lemma~\ref{lem: inequality}.  
	\end{proof} 
	
The following result corresponds to a discrete Sobolev embedding from $H_N^2$ to $W_N^{1,6}$ in the pseudo-spectral space. Similar discrete embedding estimates, in the lower order ones, could be found in Lemma 2.1 of~\cite{cheng16a}; also see the related results \cite{feng2017preconditioned,  fengW17c} in the finite difference version. A direct calculation is not able to derive these inequalities; instead, a discrete Fourier analysis has to be applied in the derivation; the details will be left in Appendix~\ref{proof:Prop 2.2}.

	\begin{prop}
\label{prop:embedding} 
  For any periodic grid function $f$, we have 
\begin{eqnarray} 
  \| \nabla_N f \|_6 \le C \| \Delta_N f \|_2 ,  \label{embedding-0} 
\end{eqnarray} 
for some constant $C$ only dependent on $\Omega$. 
	\end{prop}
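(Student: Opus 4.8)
The plan is to transfer the inequality to the spectral interpolant $f_S := S_N(f) \in \mathcal{P}_K$, to reduce it to the classical periodic Sobolev embedding $H^1 \hookrightarrow L^6$ (valid in dimension $d\le 3$), and to quantify the gap between the discrete $\ell^6$ norm of $\nabla_N f$ and the continuous $L^6$ norm of $\nabla f_S$ by a tailored aliasing estimate. The point of departure is the pointwise identities $\nabla_N f = Q_N(\nabla f_S)$ and $\Delta_N f = Q_N(\Delta f_S)$, which follow directly from the spectral definitions \eqref{spectral-coll-2-1} of the collocation derivatives (cf.\ also \eqref{interpolationl-2}). The right-hand side of \eqref{embedding-0} is then easy to handle: because $|\Delta f_S|^2 \in \mathcal{P}_{2K}$ and the $N$-point quadrature is exact on $\mathcal{P}_{2K}$ when $N = 2K+1$ (the only frequency $(\ell,m,n)$ with all components divisible by $N$ and $\max(|\ell|,|m|,|n|)\le 2K$ is the origin), one obtains $\| \Delta_N f\|_2^2 = h^3\sum_{i,j,k=0}^{N-1}|\Delta f_S(x_i,y_j,z_k)|^2 = \int_\Omega |\Delta f_S|^2\,d{\bf x} = \| \Delta f_S\|_{L^2}^2$. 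The continuous side is equally routine: if $\tilde f_S$ denotes the mean-zero part of $f_S$, a comparison of Fourier coefficients gives $\| \nabla \tilde f_S\|_{H^1}^2 \le 2\| \Delta f_S\|_{L^2}^2$ (using $4\pi^2(\ell^2+m^2+n^2)\ge 1$ for every nonzero integer triple), so that $\| \nabla f_S\|_{L^6} = \| \nabla\tilde f_S\|_{L^6} \le C\| \nabla\tilde f_S\|_{H^1} \le C\| \Delta f_S\|_{L^2} = C\| \Delta_N f\|_2$ by Sobolev embedding.

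The heart of the matter is the reverse-direction estimate $\| \nabla_N f\|_6 \le C\| \nabla f_S\|_{L^6}$ with $C$ independent of $N$, i.e.\ $h^3\sum_{i,j,k=0}^{N-1}|\nabla f_S(x_i,y_j,z_k)|^6 \le C\int_\Omega|\nabla f_S|^6\,d{\bf x}$. This is \emph{not} an equality, since $g := |\nabla f_S|^2 = \sum_{j}(\partial_{x_j}f_S)^2 \in \mathcal{P}_{2K}$ forces $g^3 = |\nabla f_S|^6 \in \mathcal{P}_{6K}$, a degree beyond $2K$, so the quadrature suffers aliasing. I would control it through three observations: (i) the exact sum formula $h^3\sum_{i,j,k=0}^{N-1} g^3(x_i,y_j,z_k) = \sum_{(\ell,m,n):\,N\mid\ell,\,N\mid m,\,N\mid n}\widehat{(g^3)}_{\ell,m,n}$, where $\widehat{(g^3)}$ are the continuous Fourier coefficients of $g^3$; (ii) since $g^3 \ge 0$ pointwise, $|\widehat{(g^3)}_{\ell,m,n}| \le \widehat{(g^3)}_{0,0,0} = \| \nabla f_S\|_{L^6}^6$ for every $(\ell,m,n)$; and (iii) since $g^3 \in \mathcal{P}_{6K}$ with $6K = 3N - 3 < 3N$, any admissible aliasing frequency has each of $\ell, m, n$ in $\{0,\pm N,\pm 2N\}$, so at most $5^3$ terms occur. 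Together these give $\| \nabla_N f\|_6^6 \le 5^3\,\| \nabla f_S\|_{L^6}^6$, and chaining with the two continuous estimates above produces \eqref{embedding-0}; the two-dimensional case is entirely analogous, with $H^1 \hookrightarrow L^6$ still available.

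I expect steps (ii)--(iii) to be the genuine obstacle, and the reason a direct computation does not work: the key is that a \emph{nonnegative} trigonometric polynomial of controlled degree has all its Fourier coefficients dominated by its mean, while the aliasing set is small. In particular, the generic aliasing bound of Lemma~\ref{lemma:aliasing error}, applied with $m=6$, would only yield $h^3\sum|\nabla f_S(x_i,y_j,z_k)|^6 \lesssim \big\| |\nabla f_S|^6 \big\|_{L^2} = \| \nabla f_S\|_{L^{12}}^6$, and $H^2_{\rm per}(\Omega)$ does \emph{not} embed into $W^{1,12}(\Omega)$ in three dimensions, so one must stay at the critical exponent $6$ by exploiting nonnegativity. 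Everything else — exactness of the quadrature on $\mathcal{P}_{2K}$, the identities $\nabla_N f = Q_N(\nabla f_S)$ and $\Delta_N f = Q_N(\Delta f_S)$, and the invariance of $\| \nabla f_S\|_{L^6}$ and $\| \Delta f_S\|_{L^2}$ under adding a constant to $f_S$ — is routine.
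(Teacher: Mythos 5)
Your proof is correct and follows essentially the same route as the paper's: pass to the spectral interpolant, control the discrete-to-continuous $\ell^6$ versus $L^6$ gap by an aliasing argument, apply the continuous Sobolev embedding $H^1 \hookrightarrow L^6$, and use exactness of the $N$-point quadrature on $\mathcal{P}_{2K}$ to identify $\| \Delta f_S \|_{L^2}$ with $\| \Delta_N f \|_2$. The only difference is that the paper outsources the key comparison $\| g \|_p \le C \| g_N \|_{L^p}$ to Lemma A.2 of the cited reference, whereas you prove it from scratch via the nonnegativity-of-$|\nabla f_S|^6$ and small-aliasing-set argument (obtaining the constant $5^{3}$ in place of the sharper $3^{3}$ quoted there), which is precisely the mechanism behind that lemma.
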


	\section{Solvability and stability of the fully discrete schemes} 
	\label{sec:stability}

A modified second order BDF temporal discretization is applied to the SPFC equation, combined with Fourier pseudo-spectral approximation in space: 
	\begin{equation} 
\frac{\frac32 \phi^{k+1}- 2 \phi^{k}+ \frac12 \phi^{k-1}}{\dt}  = \Delta_N \mu_i^{k+1}, \quad i = 1,2.
	\end{equation}
The discrete chemical potential is chosen in two different ways:  scheme (1) ($-|\nabla\phi|^2$ is destabilizing)

	\begin{align}
\mu_1^{k+1} & =   - \nabla_N \cdot ( | \nabla_N \phi^{k+1} |^2 \nabla_N \phi^{k+1} )  + a \phi^{k+1}
	\nonumber 
	\\
& \quad  + 2 \Delta_N ( 2 \phi^k - \phi^{k-1} ) - A \dt \Delta_N (\phi^{k+1} - \phi^k) + \Delta_N^2 \phi^{k+1} ,
	\label{scheme-BDF-SPFC-1}
	\end{align}
and, scheme (2) ($-\frac{\varepsilon}{2}\phi^2$ is destabilizing)
	\begin{align} 
\mu_2^{k+1} & =  - \nabla_N \cdot ( | \nabla_N \phi^{k+1} |^2 \nabla_N \phi^{k+1} )  -\varepsilon (2\phi^k-\phi^{k-1})
	\nonumber 
	\\
& \quad - A \dt \Delta_N (\phi^{k+1} - \phi^k) + \left(1+\Delta_N\right)^2 \phi^{k+1}  .
	\label{scheme-BDF-SPFC-1-alt}
	\end{align}
Comparing with the standard BDF2 algorithm, the concave diffusion terms have been explicitly updated, both for the sake of unique solvability and stability. Furthermore, a second order Douglas-Dupont-type regularization term has to be added in the chemical potential to ensure energy stability. Similar ideas could be found  for the epitaxial thin film growth model~\cite{fengW17c} and the Cahn-Hilliard model~\cite{yan17}. On the other hand, a careful analysis will reveal a subtle difference in the artificial regularization between this work and the two earlier ones, as will be demonstrated in the next section.  

	\begin{rem}
We point out that the second scheme~\eqref{scheme-BDF-SPFC-1-alt} is inspired by the ideas in the paper~\cite{shin2016}, where they construct an energy stable numerical scheme for the original PFC equation based on a modified Crank-Nicolson time discretization and a new convex splitting, one that views $-\frac{\varepsilon}{2}\phi^2$ as the destabilizing (concave) term in the energy. They show in some numerical tests that this new approach yields more accurate numerical results compared to the scheme in~\cite{hu09}. The scheme here is new in that we use the BDF2 approach, and it is applied to a different energy/gradient flow than the one considered in~\cite{shin2016}.
	\end{rem}

We will always assume that
	\[
 \varepsilon <1 \quad \iff \quad  a := 1-\varepsilon >0 .	
	\]
By $\Phi$ we denote the exact PDE solution for~\eqref{equation-SPFC}, and the initial value for the numerical methods is taken as 
	\[
\phi^0  = Q_N\left(\Phi(\, \cdot \, , t=0)\right).
	\]
We observe that~\eqref{scheme-BDF-SPFC-1} and \eqref{scheme-BDF-SPFC-1-alt} are two-step numerical methods, so that a ``ghost" point extrapolation for $\phi^{-1}$ is useful. To preserve the second order accuracy in time, we apply the following approximation: 
	\begin{equation} 
  \phi^{-1} = \phi^0 - \dt \Delta_N \mu^0, \quad  \mu^0 := 
  - \nabla_N \cdot ( | \nabla_N \phi^0 |^2 \nabla_N \phi^0 ) 
   + a \phi^0 + 2 \Delta_N \phi^0 + \Delta_N^2 \phi^0 .  \label{scheme-BDF-SPFC-initial-1}
	\end{equation}
A careful Taylor expansion indicates an $O (\dt^2 + h^m)$ accuracy for such an approximation: 
\begin{eqnarray} 
  \| \phi^{-1} - \Phi^{-1} \|_2 \le C (\dt^2 + h^m). \label{scheme-BDF-SPFC-initial-2}
\end{eqnarray}  

	\begin{thm}
	\label{SPFC solvability} 
 Given $\phi^k, \phi^{k-1} \in \mathcal{G}_N$, with $\overline{\phi^k} = \overline{\phi^{k-1}}$, there exist unique solutions -- both labeled $\phi^{k+1} \in \mathcal{G}_N$, for simplicity,  though they are in general distinct -- for the numerical schemes~\eqref{scheme-BDF-SPFC-1} and \eqref{scheme-BDF-SPFC-1-alt}. The  schemes are mass conservative, i.e., $\overline{\phi^k} \equiv \overline{\phi^0} := \beta_0$, for any $k \ge 0$. 
	\end{thm}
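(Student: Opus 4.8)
The plan is to prove mass conservation by a direct averaging computation, and then to obtain unique solvability by recasting each of \eqref{scheme-BDF-SPFC-1}, \eqref{scheme-BDF-SPFC-1-alt} as the Euler--Lagrange equation of a strictly convex, coercive functional on the finite-dimensional affine subspace of grid functions with the prescribed mean. For the conservation statement, I would take the discrete inner product of \eqref{scheme-BDF-SPFC-1} (and of the analogous identity for scheme (2)) with the constant grid function $1$; since $\langle \Delta_N \mu , 1 \rangle = 0$ for every $\mu \in \mathcal{G}_N$ (the zeroth discrete Fourier mode of $\Delta_N \mu$ vanishes), this gives $\tfrac32 \overline{\phi^{k+1}} - 2 \overline{\phi^k} + \tfrac12 \overline{\phi^{k-1}} = 0$. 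The ghost-point definition \eqref{scheme-BDF-SPFC-initial-1} yields $\overline{\phi^{-1}} = \overline{\phi^0} - \dt \, \overline{\Delta_N \mu^0} = \overline{\phi^0} =: \beta_0$, the base case; an induction on $k$ then forces $\overline{\phi^{k+1}} = \tfrac23 \bigl( 2 \overline{\phi^k} - \tfrac12 \overline{\phi^{k-1}} \bigr) = \beta_0$ whenever $\overline{\phi^k} = \overline{\phi^{k-1}} = \beta_0$, hence $\overline{\phi^k} \equiv \beta_0$ for all $k \ge 0$. In particular every solution of either scheme must lie in $\mathcal{A}_{\beta_0} := \{ \phi \in \mathcal{G}_N : \overline{\phi} = \beta_0 \}$, so it suffices to produce a unique solution there.

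Because $\phi^{k+1} - \tfrac43 \phi^k + \tfrac13 \phi^{k-1} \in \mathring{\mathcal{G}}_N$ by the previous step, the inverse operator $(-\Delta_N)^{-1}$ from \eqref{spectral-coll-5} (with $\gamma = 1$) applies to the BDF2 increment; applying it to \eqref{scheme-BDF-SPFC-1} turns the scheme into the assertion that a certain expression equals the constant $\overline{\mu_1^{k+1}}$, and that expression is exactly the $\ell^2$-gradient over $\mathring{\mathcal{G}}_N$ of
\[
\mathcal{J}_1(\phi) := \frac{3}{4\dt} \bigl\| \phi - \tfrac43 \phi^k + \tfrac13 \phi^{k-1} \bigr\|_{-1,N}^2 + \frac14 \| \nabla_N \phi \|_4^4 + \frac{a}{2} \| \phi \|_2^2 + \frac{A \dt}{2} \| \nabla_N \phi \|_2^2 + \frac12 \| \Delta_N \phi \|_2^2 + \bigl\langle 2 \Delta_N ( 2\phi^k - \phi^{k-1} ) + A\dt \, \Delta_N \phi^k , \phi \bigr\rangle ,
\]
the final bracket being affine in $\phi$ and collecting all explicit contributions to $\mu_1^{k+1}$; for scheme (2) the corresponding functional is
\[
\mathcal{J}_2(\phi) := \frac{3}{4\dt} \bigl\| \phi - \tfrac43 \phi^k + \tfrac13 \phi^{k-1} \bigr\|_{-1,N}^2 + \frac14 \| \nabla_N \phi \|_4^4 + \frac{A \dt}{2} \| \nabla_N \phi \|_2^2 + \frac12 \| ( 1 + \Delta_N ) \phi \|_2^2 + \bigl\langle A\dt \, \Delta_N \phi^k - \varepsilon ( 2\phi^k - \phi^{k-1} ) , \phi \bigr\rangle .
\]

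Each $\mathcal{J}_i : \mathcal{A}_{\beta_0} \to \mathbb{R}$ is smooth (the only term not manifestly so, $\tfrac14 \| \nabla_N \phi \|_4^4$, is a degree-four polynomial in the components of $\nabla_N \phi$) and strictly convex on $\mathcal{A}_{\beta_0}$: the quartic term, $\tfrac{A\dt}{2} \| \nabla_N \cdot \|_2^2$, $\tfrac12 \| \Delta_N \cdot \|_2^2$ and $\tfrac12 \| (1 + \Delta_N) \cdot \|_2^2$ are all convex, the affine brackets contribute nothing to the Hessian, and strict convexity is supplied either by $\tfrac{a}{2} \| \cdot \|_2^2$ with $a > 0$ (scheme (1)) or, on $\mathcal{A}_{\beta_0}$, by $\tfrac{3}{4\dt} \| \cdot \|_{-1,N}^2$, a genuine squared norm on $\mathring{\mathcal{G}}_N$ (note also that $1 + \Delta_N$ is invertible on $\mathcal{G}_N$, since $1 - 4\pi^2(\ell^2+m^2+n^2)$ never vanishes, so $\| (1+\Delta_N) \cdot \|_2$ is itself a norm). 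Finally each $\mathcal{J}_i$ is coercive on $\mathcal{A}_{\beta_0}$: on the finite-dimensional space $\mathring{\mathcal{G}}_N$ all norms are equivalent, so the $\| \cdot \|_{-1,N}^2$ term grows like $\| \phi \|_2^2$ up to a fixed constant and dominates the at-most-linear growth of the affine bracket, all remaining terms being nonnegative. Hence $\mathcal{J}_i$ attains its infimum on the closed set $\mathcal{A}_{\beta_0}$, strict convexity makes the minimizer unique, and that minimizer solves the Euler--Lagrange identity, i.e., the scheme; conversely any solution of the scheme lies in $\mathcal{A}_{\beta_0}$ and is a critical point of $\mathcal{J}_i$ there, hence equals this minimizer. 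This gives existence and uniqueness for both \eqref{scheme-BDF-SPFC-1} and \eqref{scheme-BDF-SPFC-1-alt}, which together with the averaging argument completes the proof.

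The main obstacle is the reformulation step: one must check carefully that, after applying $(-\Delta_N)^{-1}$, the $\ell^2$-variation of $\mathcal{J}_i$ over $\mathring{\mathcal{G}}_N$ reproduces \eqref{scheme-BDF-SPFC-1} (resp.\ \eqref{scheme-BDF-SPFC-1-alt}) exactly --- in particular that the Douglas--Dupont regularization $-A\dt \Delta_N(\phi^{k+1} - \phi^k)$ splits as the implicit variation of $\tfrac{A\dt}{2} \| \nabla_N \phi \|_2^2$ plus the explicit term $A\dt \, \Delta_N \phi^k$ in the affine bracket, that the explicit concave-diffusion extrapolations land in the right place, and that the undetermined additive constant of the Euler--Lagrange equation is exactly $\overline{\mu_i^{k+1}}$ (which is free, as always in an $H^{-1}$ gradient flow, and consistent with mass conservation). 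Everything after that --- smoothness, convexity, coercivity, existence of a minimizer in finite dimensions --- is routine.
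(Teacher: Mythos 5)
Your proof is correct and follows essentially the same route as the paper: mass conservation by averaging the scheme, then unique solvability by recasting each scheme as the Euler--Lagrange equation of a strictly convex functional over the mean-constrained hyperplane (your $\mathcal{J}_1$ is the paper's $F_N$ in \eqref{eqn:min-energy-1} divided by $\dt$, up to constants independent of $\phi$). You simply spell out more of the details the paper leaves implicit --- coercivity, existence of the minimizer, the equivalence between critical points and solutions of the scheme, and the explicit treatment of the second scheme.
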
  
	
	\begin{proof} 
We prove the result only for~\eqref{scheme-BDF-SPFC-1}; the proof for the other is similar.  By taking a discrete summation of \eqref{scheme-BDF-SPFC-1}, and making use of the fact that $\overline{\Delta_N \mu_i^{k+1} } =0$, $i=1,2$, as well as the mass conservation of the previous time steps: $\overline{\phi^k} = \overline{\phi^{k-1}} = \beta_0$, we are able to conclude that $\overline{\phi^{k+1}} = \beta_0$, for any $k \ge 0$, provided a solution exists.

Next, observe that \eqref{scheme-BDF-SPFC-1} can be rewritten as 
	\begin{equation} 
\mathcal{N}_N [\phi] = f := - 2 \dt \Delta_N ( 2 \phi^k - \phi^{k-1} ) - A \dt^2 \Delta_N \phi^{k} , 
	\label{eqn:operator-1}
	\end{equation}
where
	\begin{align}
\mathcal{N}_N [\phi]  & := (-\Delta_N)^{-1}\left( \frac32 \phi - 2 \phi^{k} + \frac12\phi^{k-1}\right) - \dt \nabla_N \cdot ( | \nabla_N \phi |^2 \nabla_N \phi ) 
+ a \dt \phi 
	\nonumber 
	\\
& \quad  - A \dt^2 \Delta_N \phi + \dt \Delta_N^2 \phi . 
	\label{eqn:operator-2}  
	\end{align}
The nonlinear equation \eqref{eqn:operator-1} can be recast as a minimization problem for the following discrete energy functional: 
	\begin{align}
F_N [\phi] & :=  \frac{1}{3} \nrm{ \frac32\phi - 2 \phi^{k} + \frac12 \phi^{k-1}}_{-1,N}^2  + \frac{\dt}{4} \| \nabla_N \phi \|_{4}^4 + \frac{a \dt}{2} \| \phi \|_2^2 + \frac{A \dt^2}{2} \| \nabla_N \phi \|_{2}^2  
	\nonumber
	\\
& \quad + \frac{\dt}{2} \| \Delta_N \phi \|_2^2 - \langle f ,\phi \rangle ,
\label{eqn:min-energy-1}
	\end{align}
for any $\phi \in \mathcal{G}_N$. In turn, the strict convexity of $F_N$ (in terms of $\phi$), over the hyperplane of discrete functions satisfying the mass condition $\bar{\phi} = \beta_0$, implies a unique numerical solution for \eqref{scheme-BDF-SPFC-1}. 
	\end{proof}

	\begin{thm} \label{SPFC-energy stability}
For $k \ge 1$, define the discrete modified energies
	\begin{equation}
\mathcal{E}_{N,1} (\phi^{k+1}, \phi^k) := E_N ( \phi^{k+1})+\frac{1}{4\dt} \| \phi^{k+1}- \phi^k\|_{-1,N}^2 + \| \nabla_N ( \phi^{k+1} - \phi^k ) \|_2^2 ,  
	\label{discrete energy}
	\end{equation}
and
	\begin{equation}
\mathcal{E}_{N,2} (\phi^{k+1}, \phi^k) := E_N ( \phi^{k+1})+\frac{1}{4\dt} \| \phi^{k+1}- \phi^k\|_{-1,N}^2 + \frac{\varepsilon}{2} \|  \phi^{k+1} - \phi^k  \|_2^2 . 
	\label{discrete energy-alt}
	\end{equation}
Provided that 
	\begin{equation}
A \ge \frac{\varepsilon^2}{16},
	\label{stability-cond-1-2}
	\end{equation}
solutions of the numerical schemes~\eqref{scheme-BDF-SPFC-1} ($i=1$) and \eqref{scheme-BDF-SPFC-1-alt} ($i=2$) satisfy the respective dissipation properties
	\begin{equation}
\mathcal{E}_{N,i} ( \phi^{k+1}, \phi^k) 
\le \mathcal{E}_{N,i} ( \phi^k, \phi^{k-1}) , \quad i = 1, 2.
	\label{SPFC-eng stab-est}
	\end{equation}
	\end{thm}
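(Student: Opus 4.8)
The plan is to prove the energy dissipation inequality \eqref{SPFC-eng stab-est} by taking the discrete $\ell^2$ inner product of the scheme with the natural test function and carefully bounding the terms that arise from the BDF2 stencil and the explicit extrapolation of the concave part. I will work out the details only for scheme~(1), i.e.\ \eqref{scheme-BDF-SPFC-1}; scheme~(2) is entirely analogous with $-|\nabla_N\phi|^2$ replaced by $-\frac{\varepsilon}{2}\phi^2$ as the destabilized term. First I would rewrite the scheme as $(-\Delta_N)^{-1}\bigl(\tfrac32\phi^{k+1}-2\phi^k+\tfrac12\phi^{k-1}\bigr)=\dt\,\mu_1^{k+1}$ (using that all the quantities involved have zero mean after subtracting the conserved average, so $(-\Delta_N)^{-1}$ is well defined on the differences), and then pair both sides with $\phi^{k+1}-\phi^k$ in the $\langle\cdot,\cdot\rangle$ inner product.

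The second step is to handle the BDF2 term on the left. Using the standard identity $\langle\tfrac32\phi^{k+1}-2\phi^k+\tfrac12\phi^{k-1},\,(-\Delta_N)^{-1}(\phi^{k+1}-\phi^k)\rangle$ and the polarization/telescoping formula for the BDF2 stencil in the $\|\cdot\|_{-1,N}$ norm, this produces a difference of the form $\tfrac14\|\phi^{k+1}-\phi^k\|_{-1,N}^2-\tfrac14\|\phi^k-\phi^{k-1}\|_{-1,N}^2$ plus a nonnegative remainder (something like $\tfrac14\|\phi^{k+1}-2\phi^k+\phi^{k-1}\|_{-1,N}^2$), which I will keep on the favorable side of the inequality. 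On the right side, the convex terms are treated by convexity inequalities: for the $4$-Laplacian term, $\langle -\nabla_N\cdot(|\nabla_N\phi^{k+1}|^2\nabla_N\phi^{k+1}),\phi^{k+1}-\phi^k\rangle=\langle|\nabla_N\phi^{k+1}|^2\nabla_N\phi^{k+1},\nabla_N(\phi^{k+1}-\phi^k)\rangle\ge \tfrac14\|\nabla_N\phi^{k+1}\|_4^4-\tfrac14\|\nabla_N\phi^k\|_4^4$ by the convexity of $\tfrac14|\cdot|^4$; similarly $a\langle\phi^{k+1},\phi^{k+1}-\phi^k\rangle\ge\tfrac{a}{2}\|\phi^{k+1}\|_2^2-\tfrac{a}{2}\|\phi^k\|_2^2$ and the biharmonic term gives $\tfrac12\|\Delta_N\phi^{k+1}\|_2^2-\tfrac12\|\Delta_N\phi^k\|_2^2$ (both with an extra nonnegative squared-difference term that I discard). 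The Douglas--Dupont term contributes $-A\dt\langle\Delta_N(\phi^{k+1}-\phi^k),\phi^{k+1}-\phi^k\rangle=A\dt\|\nabla_N(\phi^{k+1}-\phi^k)\|_2^2\ge0$, which I keep.

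The delicate step---and the one I expect to be the main obstacle---is the concave extrapolation term $2\langle\Delta_N(2\phi^k-\phi^{k-1}),\phi^{k+1}-\phi^k\rangle = -2\langle\nabla_N(2\phi^k-\phi^{k-1}),\nabla_N(\phi^{k+1}-\phi^k)\rangle$. Writing $2\phi^k-\phi^{k-1}=\phi^{k+1}-(\phi^{k+1}-2\phi^k+\phi^{k-1})$, this splits into $-2\|\nabla_N(\phi^{k+1}-\phi^k)\|_2^2 \cdot(\text{after regrouping})$ plus cross terms involving the second difference $\phi^{k+1}-2\phi^k+\phi^{k-1}$; the genuinely destabilizing piece is $-2\langle\nabla_N\phi^{k+1},\nabla_N(\phi^{k+1}-\phi^k)\rangle$, which by polarization equals $-\|\nabla_N\phi^{k+1}\|_2^2+\|\nabla_N\phi^k\|_2^2-\|\nabla_N(\phi^{k+1}-\phi^k)\|_2^2$, accounting exactly for the $-\|\nabla_N\phi\|_2^2$ term in $E_N$ and, crucially, for the $\|\nabla_N(\phi^{k+1}-\phi^k)\|_2^2$ term in the modified energy $\mathcal{E}_{N,1}$. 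The remaining cross terms couple $\|\nabla_N(\phi^{k+1}-\phi^k)\|_2^2$ against $\|\nabla_N(\phi^{k+1}-2\phi^k+\phi^{k-1})\|_2^2$; here I would use a Cauchy--Schwarz/Young inequality and absorb the bad contribution into the nonnegative $A\dt\|\nabla_N(\phi^{k+1}-\phi^k)\|_2^2$ term together with the nonnegative $\|\nabla_N(\phi^{k+1}-2\phi^k+\phi^{k-1})\|_2^2$ leftover from the BDF2 analysis. Tracking the constants through this absorption is exactly where the condition $A\ge\varepsilon^2/16$ emerges: the coefficient $\varepsilon=1-a$ enters because the "$-1$" multiplying $\|\nabla_N\phi\|_2^2$ in the energy can be combined with the $\tfrac{a}{2}\|\phi\|_2^2$ term, or equivalently because in scheme~(2) the destabilizing coefficient is literally $\varepsilon$, and matching the two viewpoints forces the artificial diffusion to dominate $\tfrac14(\tfrac{\varepsilon}{2})^2$. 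Once all nonnegative remainders are shown to have nonnegative coefficients under \eqref{stability-cond-1-2}, collecting the telescoping terms yields $\mathcal{E}_{N,1}(\phi^{k+1},\phi^k)\le\mathcal{E}_{N,1}(\phi^k,\phi^{k-1})$, and the same bookkeeping with the $\ell^2$ polarization in place of the gradient polarization gives the scheme~(2) statement with the $\tfrac{\varepsilon}{2}\|\phi^{k+1}-\phi^k\|_2^2$ term in $\mathcal{E}_{N,2}$.
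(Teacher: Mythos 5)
Your overall architecture --- testing against $(-\Delta_N)^{-1}(\phi^{k+1}-\phi^k)$, telescoping the BDF2 stencil in the $\|\cdot\|_{-1,N}$ norm, using convexity for the $4$-Laplacian, the $a$-term and the biharmonic term, and splitting $2\phi^k-\phi^{k-1}=\phi^{k+1}-(\phi^{k+1}-2\phi^k+\phi^{k-1})$ in the concave extrapolation --- is exactly the paper's. But the step you yourself flag as ``the main obstacle'' is handled incorrectly, and this is a genuine gap. After the decomposition, the destabilizing leftover is a full $-\|\nabla_N(\phi^{k+1}-\phi^k)\|_2^2$ (the $+\|\nabla_N(\phi^{k+1}-\phi^k)\|_2^2-\|\nabla_N(\phi^k-\phi^{k-1})\|_2^2$ pair telescopes into the modified energy $\mathcal{E}_{N,1}$; it does not cancel the bad term). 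Your plan is to absorb this into the nonnegative Douglas--Dupont contribution $A\dt\|\nabla_N(\phi^{k+1}-\phi^k)\|_2^2$, but that comparison only closes if $A\dt\ge 1$, i.e.\ it produces a lower bound on the time step rather than the $\dt$-independent condition $A\ge\varepsilon^2/16$. (You also attribute the nonnegative $\|\nabla_N(\phi^{k+1}-2\phi^k+\phi^{k-1})\|_2^2$ to the BDF2 analysis; the BDF2 remainder is $\tfrac{1}{4\dt}\|\phi^{k+1}-2\phi^k+\phi^{k-1}\|_{-1,N}^2$, in the wrong norm to help here.)

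The paper closes the argument with two ingredients absent from your proposal. First, the BDF2 identity actually yields the coefficient $\tfrac{5}{4\dt}$ on $\|\phi^{k+1}-\phi^k\|_{-1,N}^2$, of which only $\tfrac{1}{4\dt}$ is used for telescoping; the surplus $\tfrac{1}{\dt}\|\phi^{k+1}-\phi^k\|_{-1,N}^2$ is combined with $A\dt\|\nabla_N(\phi^{k+1}-\phi^k)\|_2^2$ via Cauchy's inequality and the duality bound $\|f\|_2^2\le\|f\|_{-1,N}\,\|\nabla_N f\|_2$ to produce the $\dt$-free quantity $2A^{1/2}\|\phi^{k+1}-\phi^k\|_2^2$. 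Second, the leftover $-\|\nabla_N(\phi^{k+1}-\phi^k)\|_2^2$ is controlled by the interpolation inequality $\|\nabla_N f\|_2^2\le\|f\|_2\,\|\Delta_N f\|_2\le\tfrac12\|f\|_2^2+\tfrac12\|\Delta_N f\|_2^2$, with the $\tfrac12\|\Delta_N(\phi^{k+1}-\phi^k)\|_2^2$ supplied by the convexity remainder of the biharmonic term. The stability condition then reads $2A^{1/2}+\tfrac{a}{2}\ge\tfrac12$, which is precisely $A\ge\tfrac{(1-a)^2}{16}=\tfrac{\varepsilon^2}{16}$; your heuristic ``$\tfrac14(\tfrac{\varepsilon}{2})^2$'' lands on the right number but not by the mechanism you describe. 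Without the Cauchy step and the interpolation step, the inequality does not close under the stated hypothesis.
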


	\begin{proof} 
  Since $\phi^{k+1} - \phi^k \in \mathring{\mathcal{G}}_N$, we take a discrete inner product of~\eqref{scheme-BDF-SPFC-1} with $(-\Delta_N)^{-1} (\phi^{k+1} - \phi^k)$. The following identities can be derived: 
	\begin{align} 
& \hspace{-0.3in} \frac{1}{\dt} \left\langle  \frac32 \phi^{k+1} - 2 \phi^k + \frac12 \phi^{k-1} , 
  (-\Delta_N)^{-1} (\phi^{k+1} - \phi^k)  \right\rangle
	\nonumber 
	\\
& = \frac{1}{\dt}  \left( \frac32 \|  \phi^{k+1} - \phi^k \|_{-1,N}^2 
  - \frac12 \langle \phi^k - \phi^{k-1} , \phi^{k+1} - \phi^k \rangle_{-1,N} \right) 
	\nonumber 
	\\
& = \frac{1}{\dt}  \left( \frac54 \| \phi^{k+1} - \phi^k \|_{-1,N}^2 - \frac14 \| \phi^k - \phi^{k-1} \|_{-1,N}^2  + \frac{1}{4} \nrm{\phi^{k+1}-2\phi^k+\phi^{k-1}}_{-1,N}^2\right) , 
    \label{scheme-BDF-stability-1} 
	\\
& \hspace{-0.3in} \left\langle  \Delta_N ( \nabla_N \cdot ( | \nabla_N \phi^{k+1} |^2 \nabla_N \phi^{k+1} )  )  , 
  (-\Delta_N)^{-1} (\phi^{k+1} - \phi^k)  \right\rangle  
	\nonumber 
	\\
& = \left\langle  - \nabla_N \cdot ( | \nabla_N \phi^{k+1} |^2 \nabla_N \phi^{k+1} )  ,  \phi^{k+1} - \phi^k  \right\rangle   
	\nonumber 
	\\
& = \left\langle  | \nabla_N \phi^{k+1} |^2 \nabla_N \phi^{k+1} )  ,  \nabla_N ( \phi^{k+1} - \phi^k  ) \right\rangle = \frac14 (  \| \nabla_N \phi^{k+1} \|_4^4 - \| \nabla_N \phi^k \|_4^4 ) +R_4 , 
    \label{scheme-BDF-stability-2} 
    \end{align}
where the non-negative remainder $R_4\ge 0$ depends upon $\phi^{k+1}$, $\phi^k$. Furthermore,
    \begin{align}
& \hspace{-0.3in} \left\langle  - \Delta_N \phi^{k+1}  , (-\Delta_N)^{-1} (\phi^{k+1} - \phi^k)  \right\rangle =  \langle  \phi^{k+1}  , \phi^{k+1} - \phi^k  \rangle  
	\nonumber 
	\\
& = \frac12 \left(  \| \phi^{k+1} \|_2^2 - \| \phi^k \|_2^2  + \| \phi^{k+1} - \phi^k \|_2^2  \right)  , 
    \label{scheme-BDF-stability-3} 
	\\
& \hspace{-0.3in} \left\langle - \Delta_N^3 \phi^{k+1}  , (-\Delta_N)^{-1} (\phi^{k+1} - \phi^k)  \right\rangle  =  \langle  \Delta_N \phi^{k+1}  , \Delta_N (\phi^{k+1} - \phi^k)  \rangle
	\nonumber 
	\\
& = \frac12 \left(  \| \Delta_N \phi^{k+1} \|_2^2 - \| \Delta_N \phi^k \|_2^2  + \| \Delta_N ( \phi^{k+1} - \phi^k ) \|_2^2  \right)  , 
    \label{scheme-BDF-stability-4}     
	\\
& \hspace{-0.3in} \dt \left\langle  \Delta_N^2 ( \phi^{k+1} - \phi^k ) ,  (-\Delta_N)^{-1} (\phi^{k+1} - \phi^k)  \right\rangle  =  \dt \| \nabla_N  ( \phi^{k+1} - \phi^k ) \|_2^2  , 
    \label{scheme-BDF-stability-5}  
	\\
& \hspace{-0.3in} 2  \left\langle  - \Delta_N^2  ( 2 \phi^k - \phi^{k-1})  , (-\Delta_N)^{-1} (\phi^{k+1} - \phi^k)  \right\rangle  = -2  \left\langle  \nabla_N ( 2 \phi^k - \phi^{k-1} ) , \nabla_N ( \phi^{k+1} - \phi^k)  \right\rangle
	\nonumber 
	\\
& = - \left(  \| \nabla_N \phi^{k+1} \|_2^2 - \| \nabla_N \phi^k \|_2^2 +\nrm{\nabla_N\left(\phi^{k+1}-\phi^k\right)}_2^2  \right) 
	\\
& \quad  + \nrm{\nabla_N\left(\phi^{k+1}-\phi^k \right)}_2^2 - \nrm{\nabla_N\left(\phi^k-\phi^{k-1} \right)}_2^2 + \nrm{\nabla_N\left(\phi^{k+1}-2\phi^k+\phi^{k-1}\right)}_2^2  . 
    \label{scheme-BDF-stability-6}        
	\end{align} 
Meanwhile, an application of Cauchy inequality indicates the following estimate: 
	\begin{equation} 
\frac{1}{\dt} \| \phi^{k+1} - \phi^k \|_{-1,N}^2 + A \dt \| \nabla_N ( \phi^{k+1} - \phi^k ) \|_2^2  \ge 2 A^{1/2}  \| \phi^{k+1} - \phi^k \|_2^2 . 
	\label{scheme-BDF-stability-7}    
	\end{equation} 
A combination of~\eqref{scheme-BDF-stability-1} -- \eqref{scheme-BDF-stability-6} and \eqref{scheme-BDF-stability-7} yields 
	\begin{equation} 
\mathcal{E}_N^{k+1} - \mathcal{E}_N^k - \| \nabla_N ( \phi^k - \phi^{k-1} ) \|_2^2 + \left(2 A^{1/2} + \frac{a}{2}\right)  \| \phi^{k+1} - \phi^k \|_2^2 + \frac12  \| \Delta_N ( \phi^{k+1} - \phi^k ) \|_2^2  \le 0 .   
	\label{scheme-BDF-stability-8}    
	\end{equation} 
In addition, under the condition that 
	\begin{equation} 
2 A^{1/2} + \frac{a}{2}   \ge \frac12 ,   \quad \mbox{or, equivalently,} \quad A \ge \frac{(1 - a)^2}{16} = \frac{\varepsilon^2}{16} ,  
	\label{condition-1} 
	\end{equation} 
we have 
	\begin{align}
\left( 2 A^{1/2} + \frac{\alpha}{2} \right) \| \phi^{k+1} - \phi^k \|_2^2  + \frac12  \| \Delta_N ( \phi^{k+1} - \phi^k ) \|^2  &  \ge  \frac12 \| \phi^{k+1} - \phi^k \|_2^2 + \frac12  \| \Delta_N ( \phi^{k+1} - \phi^k ) \|_2^2  
	\nonumber
	\\
& \ge \| \nabla_N ( \phi^{k+1} - \phi^k ) \|_2^2 .   
	\label{scheme-BDF-stability-9}    
	\end{align}  
Therefore, with an introduction of the modified discrete energy given by~\eqref{discrete energy}, we arrive at $\mathcal{E}_{N,1}^{k+1} \le  \mathcal{E}_{N,1}^k$, under the condition \eqref{stability-cond-1-2}.

For the second scheme~\eqref{scheme-BDF-SPFC-1-alt}, we need the additional identities
    \begin{align}
& \hspace{-0.3in} \left\langle - \Delta_N\left(1+\Delta_N\right)^2 \phi^{k+1}  , (-\Delta_N)^{-1} (\phi^{k+1} - \phi^k)  \right\rangle  =  \langle  \left(1+\Delta_N\right) \phi^{k+1}  , \left(1+\Delta_N\right) (\phi^{k+1} - \phi^k)  \rangle
	\nonumber 
	\\
& = \frac12 \left(  \| (1+\Delta_N) \phi^{k+1} \|_2^2 - \| (1+\Delta_N) \phi^k \|_2^2  + \| (1+\Delta_N) ( \phi^{k+1} - \phi^k ) \|_2^2  \right)  , 
    \label{scheme-BDF-stability-4-alt}
	\\
& \hspace{-0.3in}   \left\langle   \Delta_N  ( 2 \phi^k - \phi^{k-1})  , (-\Delta_N)^{-1} (\phi^{k+1} - \phi^k)  \right\rangle  = -  \left\langle   2 \phi^k - \phi^{k-1}  ,   \phi^{k+1} - \phi^k  \right\rangle
	\nonumber 
	\\
& = - \frac{1}{2}\left(  \| \phi^{k+1} \|_2^2 - \|  \phi^k \|_2^2 +\nrm{ \phi^{k+1}-\phi^k }_2^2  \right) 
	\\
& \quad  + \frac{1}{2}\nrm{\phi^{k+1}-\phi^k}_2^2 - \frac{1}{2}\nrm{ \phi^k-\phi^{k-1} }_2^2 + \frac{1}{2}\nrm{ \phi^{k+1}-2\phi^k+\phi^{k-1} }_2^2 . 
    \label{scheme-BDF-stability-6-alt}        
	\end{align} 
A combination of~\eqref{scheme-BDF-stability-1}, \eqref{scheme-BDF-stability-2}, \eqref{scheme-BDF-stability-4-alt} -- \eqref{scheme-BDF-stability-6-alt}  and \eqref{scheme-BDF-stability-7} yields 
	\begin{align} 
\mathcal{E}_{N,2}^{k+1} - \mathcal{E}_{N,2}^k +\left(2 A^{1/2}-\frac{\varepsilon}{2}\right) \|  \phi^k - \phi^{k-1}  \|_2^2  \le 0 .   
	\label{scheme-BDF-stability-8-alt}    
	\end{align} 
Thus, under the condition $ A \ge \frac{\varepsilon^2}{16}$, which is the same as for the first scheme,  we have $\mathcal{E}_{N,2}^{k+1} \le  \mathcal{E}_{N,2}^k$.
	\end{proof}

	\begin{rem}
We can also get a stability involving the chemical potential, if it is needed. It is observed that
	\begin{align*}
\dt\nrm{\nabla\mu_i^{k+1}}_2^2 & = \frac{1}{\dt}\nrm{\frac32 \phi^{k+1} - 2 \phi^k + \frac12 \phi^{k-1}}_{-1,N}^2
	\\
& = \frac{1}{\dt}\nrm{ \left( \phi^{k+1} - \phi^k \right) +\frac{1}{2} \left(  \phi^{k+1} - 2 \phi^k +  \phi^{k-1}\right) }_{-1,N}^2
	\\
& \le \frac{2}{\dt}\nrm{\phi^{k+1} - \phi^k }_{-1,N}^2 + \frac{1}{2\dt}\nrm{ \phi^{k+1} - 2 \phi^k +  \phi^{k-1}}_{-1,N}^2.
	\end{align*}
Thus, for example,
	\[
\frac{\dt}{2}\nrm{\nabla\mu_i^{k+1}}_2^2 \le \frac{1}{\dt}\nrm{\phi^{k+1} - \phi^k }_{-1,N}^2 + \frac{1}{4\dt}\nrm{ \phi^{k+1} - 2 \phi^k +  \phi^{k-1}}_{-1,N}^2 .
	\]
The first term on the right-hand side was completely utilized in our stability analyses. But, at the expense of a larger splitting parameter $A$, some of the term can be spared, and a norm stability of the chemical potential can be derived.	
	\end{rem}

	\begin{rem}
The energy stability of numerical methods for  gradient flow PDE have attracted a lot of attentions over the years. For the second order numerical scheme using the BDF temporal stencil, an artificial Douglas-Dupont regularization term has to be added to ensure the energy stability, as demonstrated in recent works~\cite{fengW17c, LiW18, yan17}, for the epitaxial thin film growth and Cahn-Hilliard equations, respectively. In particular, a careful comparison reveals that, the energy growth term coming from the concave diffusion process has to be compensated by the artificial diffusion term, as well as the additional stability in the temporal stencil. This in turn requires an artificial diffusion term to have the same diffusion power as the surface diffusion term, which becomes acceptable in the practical computations. 

For our first scheme~\eqref{scheme-BDF-SPFC-1}, on the other hand, if we carry out the same analysis for the SPFC equation~\eqref{equation-SPFC}, an artificial diffusion term with the power index as $- A \dt \Delta_N^3 (\phi^{k+1} - \phi^k)$, is needed to balance the energy growth part coming from the concave diffusion. This artificial diffusion term has an even higher diffusion power than the surface diffusion term, which in turn may lead to an extra numerical dissipation in the long time simulation. To avoid such a numerical artifact, we make use of the additional stability estimate coming from the surface diffusion part, so that only an artificial diffusion power index as $- A \dt \Delta_N (\phi^{k+1} - \phi^k)$ is needed in the numerical algorithm, and the energy stability could be derived using a more involved analysis. This choice avoids a higher order artificial diffusion term, therefore a reduced numerical dissipation is expected for the numerical effect. 
    \end{rem}

As a direct consequence of the energy stability, a uniform in time $H_N^2$ bound for the numerical solution is given as follows. 

	\begin{cor} \label{SPFC: H^2 bound} 
Suppose that the initial data are sufficiently regular so that for the first scheme~\eqref{scheme-BDF-SPFC-1}
	\[
E_N (\phi^0) + \frac{\dt}{4} \| \nabla_N \mu_N^0 \|_2^2 + \dt^2 \| \nabla_N \Delta_N \mu_N^0 \|_2^2 \le \tilde{C}_0,
	\]
and for the second scheme~\eqref{scheme-BDF-SPFC-1-alt},	
	\[
E_N (\phi^0) + \frac{\dt}{4} \| \nabla_N \mu_N^0 \|_2^2 + \frac{\varepsilon\dt^2}{2} \|  \Delta_N \mu_N^0 \|_2^2 \le \tilde{C}_0,
	\]
for some $\tilde{C}_0$ that is independent of $h$, and $A\ge \frac{\varepsilon^2}{16}$. Then we have the following uniform (in time) $H_h^2$ bound for the numerical solution: 
	\begin{equation}
\nrm{ \phi^m}_{H_N^2} \le \tilde{C}_1 ,  \quad \forall \, m \ge 1 , \label{SPFC-H2 stab-0}
	\end{equation} 
where $\tilde{C}_1>0$  depends on $\Omega$ and $\tilde{C}_0$, but is independent of $h$, $\dt$ and final time.  
	\end{cor}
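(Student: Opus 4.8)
The plan is to obtain the bound directly from the dissipation estimate of Theorem~\ref{SPFC-energy stability}, in three stages: (i) match the modified energy at the ``ghost'' step $(\phi^0,\phi^{-1})$ with the quantity appearing in the hypothesis; (ii) propagate the resulting bound to every $m\ge 1$ by the monotonicity~\eqref{SPFC-eng stab-est}; and (iii) convert a uniform bound on $E_N(\phi^m)$ into a uniform $H_N^2$ bound. I carry this out for scheme~\eqref{scheme-BDF-SPFC-1}; scheme~\eqref{scheme-BDF-SPFC-1-alt} is treated identically when $\varepsilon\ge 0$, and with one additional interpolation step when $\varepsilon<0$.

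For stage (i), the ghost-point prescription~\eqref{scheme-BDF-SPFC-initial-1} gives $\phi^0-\phi^{-1}=\dt\,\Delta_N\mu^0$, and a direct computation in the discrete Fourier space yields $\|\phi^0-\phi^{-1}\|_{-1,N}^2=\dt^2\|\nabla_N\mu^0\|_2^2$ and $\|\nabla_N(\phi^0-\phi^{-1})\|_2^2=\dt^2\|\nabla_N\Delta_N\mu^0\|_2^2$ (and $\|\phi^0-\phi^{-1}\|_2^2=\dt^2\|\Delta_N\mu^0\|_2^2$, which is what is needed for scheme~\eqref{scheme-BDF-SPFC-1-alt}). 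Hence $\mathcal{E}_{N,1}(\phi^0,\phi^{-1})$ equals the left-hand side of the stated hypothesis, so that $\mathcal{E}_{N,1}(\phi^0,\phi^{-1})\le\tilde C_0$; likewise $\mathcal{E}_{N,2}(\phi^0,\phi^{-1})\le\tilde C_0$. For stage (ii), note that $\phi^1$ satisfies~\eqref{scheme-BDF-SPFC-1} with $\phi^{k-1}$ replaced by the ghost value $\phi^{-1}$; since the identities~\eqref{scheme-BDF-stability-1}--\eqref{scheme-BDF-stability-9} used in the proof of Theorem~\ref{SPFC-energy stability} are purely algebraic in $(\phi^{k+1},\phi^k,\phi^{k-1})$, they remain valid at $k=0$, so that~\eqref{SPFC-eng stab-est} holds for all $k\ge0$. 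Telescoping then gives $\mathcal{E}_{N,1}(\phi^m,\phi^{m-1})\le\mathcal{E}_{N,1}(\phi^0,\phi^{-1})\le\tilde C_0$ for every $m\ge1$, and discarding the two nonnegative correction terms in~\eqref{discrete energy} yields $E_N(\phi^m)\le\tilde C_0$ for all $m\ge1$.

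Stage (iii) is where the structure of~\eqref{energy-discrete-spectral} enters: $E_N$ is not manifestly coercive because of the destabilizing term $-\|\nabla_N\phi\|_2^2$, which must be absorbed into the quartic term. By the discrete Cauchy--Schwarz inequality $\|\nabla_N\phi\|_2^2\le|\Omega|^{1/2}\|\nabla_N\phi\|_4^2$ and then Young's inequality $|\Omega|^{1/2}t^2\le\frac18 t^4+2|\Omega|$, one has
\[
E_N(\phi)\ \ge\ \frac18\|\nabla_N\phi\|_4^4+\frac a2\|\phi\|_2^2+\frac12\|\Delta_N\phi\|_2^2-2|\Omega| .
\]
Taking $\phi=\phi^m$, and using $E_N(\phi^m)\le\tilde C_0$ together with the standing hypothesis $a=1-\varepsilon>0$, this gives uniform bounds on $\|\phi^m\|_2$, $\|\Delta_N\phi^m\|_2$ and $\|\nabla_N\phi^m\|_4$; since $\|\nabla_N\phi^m\|_2\le|\Omega|^{1/4}\|\nabla_N\phi^m\|_4$, the discrete $H_N^1$ seminorm is controlled as well. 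Collecting these, $\|\phi^m\|_{H_N^2}^2=\|\phi^m\|_2^2+\|\nabla_N\phi^m\|_2^2+\|\Delta_N\phi^m\|_2^2\le\tilde C_1^2$, with $\tilde C_1$ depending only on $\Omega$, $a$ and $\tilde C_0$, hence independent of $h$, $\dt$ and the final time. For scheme~\eqref{scheme-BDF-SPFC-1-alt} with $\varepsilon<0$, $\mathcal{E}_{N,2}$ no longer dominates $E_N(\phi^m)$ termwise, so in stage (ii) one instead writes $E_N(\phi^m)=\mathcal{E}_{N,2}(\phi^m,\phi^{m-1})-\frac{1}{4\dt}\|\phi^m-\phi^{m-1}\|_{-1,N}^2+\frac{|\varepsilon|}{2}\|\phi^m-\phi^{m-1}\|_2^2$ and controls the last term using the interpolation $\|g\|_2^2\le\|g\|_{-1,N}\|\nabla_N g\|_2$, Young's inequality and the coercivity bound above; this closes a contractive recursion for $E_N(\phi^m)$ once $\dt$ is below an $h$- and $T$-independent threshold, and the conclusion follows as before.

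The step I expect to be the main obstacle is stage (iii): because of the sign-indefinite (concave) term, $E_N$ fails to be coercive on $\mathcal{G}_N$, so boundedness of $E_N(\phi^m)$ does not by itself control $\|\phi^m\|_{H_N^2}$ --- one must exploit the quartic gradient term together with the assumption $\varepsilon<1$. The only other nontrivial point is the bookkeeping for scheme~\eqref{scheme-BDF-SPFC-1-alt} when $\varepsilon<0$; everything else reduces to summation-by-parts and telescoping.
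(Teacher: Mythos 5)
Your proof is correct and follows essentially the same route as the paper: identify $\mathcal{E}_{N,i}(\phi^0,\phi^{-1})$ with the hypothesized quantity via the ghost-point relation $\phi^0-\phi^{-1}=\dt\,\Delta_N\mu^0$, telescope the dissipation inequality, and then recover coercivity of $E_N$ by absorbing the concave term $-\|\nabla_N\phi\|_2^2$ into the quartic term. The only (cosmetic) difference is that the paper absorbs it via the pointwise inequality $\tfrac14|\nabla_N\phi|^4-2|\nabla_N\phi|^2\ge-4$, which directly leaves a positive $\|\nabla_N\phi\|_2^2$ in the lower bound, whereas you use H\"older plus Young and then recover $\|\nabla_N\phi\|_2$ from the retained $\|\nabla_N\phi\|_4$ bound; also, your extra discussion of $\varepsilon<0$ is unnecessary under the paper's standing assumption $0<\varepsilon<1$.
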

	
	\begin{proof} 
As a result of \eqref{SPFC-eng stab-est}, for the first scheme~\eqref{scheme-BDF-SPFC-1}, the following energy bound is available: 
	\begin{align} 
E_N (\phi^m) & \le \mathcal{E}_{N,1} (\phi^{m}, \phi^{m-1}) \le   \mathcal{E}_{N,1} (\phi^0, \phi^{-1}) 
	\nonumber
	\\
& = E_N (\phi^0) + \frac{1}{4 \dt}  \| \phi^0 - \phi^{-1} \|_{-1,N}^2 + \| \nabla_N ( \phi^0 - \phi^{-1} ) \|_2^2   
	\nonumber 
	\\
&= E_N (\phi^0) + \frac{\dt}{4} \| \nabla_N \mu_N^0 \|_2^2 + \dt^2 \| \nabla_N \Delta_N \mu_N^0 \|_2^2 \le \tilde{C}_0 , \quad \forall m \ge 1. 
	\label{SPFC-H2 bound-1} 
	\end{align}
For the second scheme~\eqref{scheme-BDF-SPFC-1-alt}
	\begin{align} 
E_N (\phi^m) & \le \mathcal{E}_{N,2} (\phi^{m}, \phi^{m-1}) \le   \mathcal{E}_{N,2} (\phi^0, \phi^{-1}) 
	\nonumber
	\\
& = E_N (\phi^0) + \frac{1}{4 \dt}  \| \phi^0 - \phi^{-1} \|_{-1,N}^2 + \frac{\varepsilon}{2} \|  ( \phi^0 - \phi^{-1} ) \|_2^2   
	\nonumber 
	\\
&= E_N (\phi^0) + \frac{\dt}{4} \| \nabla_N \mu_N^0 \|_2^2 + \frac{\varepsilon\dt^2}{2} \|  \Delta_N \mu_N^0 \|_2^2 \le \tilde{C}_0 , \quad \forall m \ge 1. 
	\label{SPFC-H2 bound-1-alt} 
	\end{align}

On the other hand, the point-wise quadratic inequality, $\frac14 | \nabla_N \phi |^4 - 2 | \nabla_N \phi |^2 \ge - 4$, implies that 
\begin{eqnarray} 
  \frac14 \| \nabla_N \phi^m \|_4^4 - 2 \| \nabla_N \phi^m \|_2^2 \ge - 4 | \Omega | . 
  \label{SPFC-H2 bound-2} 
\end{eqnarray}
Its substitution into either~\eqref{SPFC-H2 bound-1} or \eqref{SPFC-H2 bound-1-alt} yields 
	\[ 
\frac{a}{2} \| \phi^m \|_2^2 + \| \nabla_N \phi^m \|_2^2 + \frac12  \| \Delta_N \phi^m \|_2^2  \le \tilde{C}_0 + 4 | \Omega | , 
	\]
so that, since $a > 0$,
	\[
\| \phi^m \|_{H_N^2}^2  \le \| \phi^m \|_2^2 + \| \nabla_N \phi^m \|_2^2 
  +  \| \Delta_N \phi^m \|_2^2 
  \le \frac{2}{a} (\tilde{C}_0 + 4 | \Omega |) , 
	\]
which implies 
	\begin{equation}
\| \phi^m \|_{H_N^2}. \le \Bigl( \frac{2}{a} (\tilde{C}_0 + 4 | \Omega |)  \Bigr)^{1/2} , \quad \forall m \ge 1 . 
	\label{SPFC-H2 bound-3} 
	\end{equation}
This completes the proof of Corollary~\ref{SPFC: H^2 bound}. 
	\end{proof} 

	\begin{rem}  
	\label{rem:W16 est}
As a combination of the uniform in time $H_N^2$ bound~\eqref{SPFC-H2 stab-0} and the discrete Sobolev embedding inequality~\eqref{embedding-0}, we arrive at a uniform in time $W_N^{1,6}$ estimate for the numerical solution: 
	\begin{equation} 
\| \nabla_N \phi^m \|_6 \le C \tilde{C}_1 ,  \quad \forall \ m \ge 1 .  
	\label{SPFC-W16 est-0} 
	\end{equation} 
This estimate will be useful in the convergence analysis presented below.  
\end{rem}

	\section{Convergence analysis}  \label{sec:convergence}

Now we proceed into the $\ell^\infty (0,T; \ell^2) \cap \ell^2 (0, T; H_N^3)$ convergence analysis for the proposed numerical scheme. With an initial data with sufficient regularity, we could assume that the exact solution has regularity of class $\mathcal{R}$: 
	\begin{equation}
\Phi \in \mathcal{R} := H^3 (0,T; C^0) \cap H^2 (0,T; H^4) \cap L^\infty (0,T; H^{m+6}).
	\label{assumption:regularity.1}
	\end{equation} 
We prove convergence only for the first scheme~\eqref{scheme-BDF-SPFC-1}; the details of the proof for the second are similar.
		
	\begin{thm}
	\label{thm:convergence}
Given initial data $\Phi_0 \in H_{\rm per}^{m+6} (\Omega)$, suppose the exact solution for SPFC equation~\eqref{equation-SPFC} is of regularity class $\mathcal{R}$. Then, provided $A \ge \frac{\varepsilon^2}{16}$ and $\dt$ and $h$ are sufficiently small, we have
	\begin{equation}
\max_{0\le n\le M}\| \Phi^n - \phi^n \|_2 +  ( \dt   \sum_{m=1}^{M} \| \nabla_N \Delta_N ( \Phi^m - \phi^m ) \|_2^2 )^{1/2}  \le C ( \dt^2 + h^m ),
	\label{SPFC-convergence-0}
	\end{equation}
where $C>0$ is independent of $\dt$ and $h$, and $\dt = T/M$.
	\end{thm}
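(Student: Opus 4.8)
The plan is to carry out a standard-but-careful consistency-plus-stability argument for the error function $e^k := \Phi^k - \phi^k$, where $\Phi^k := Q_N(\Phi(\cdot,t_k))$ is the interpolation of the exact solution onto the grid. First I would derive the consistency estimate: plug the exact solution into the scheme~\eqref{scheme-BDF-SPFC-1} and Taylor-expand in time (using $\Phi \in H^3(0,T;C^0)\cap H^2(0,T;H^4)$ for the BDF2 stencil and the explicit extrapolation $2\Phi^k-\Phi^{k-1}$ of the concave term), while controlling the spatial truncation via the spectral approximation estimate~\eqref{spectral-approximation} and the collocation-derivative identity~\eqref{interpolationl-2}; the de-aliasing estimate of Lemma~\ref{lemma:aliasing error} is needed to bound the interpolated $4$-Laplacian nonlinearity in the appropriate $H^r$ norm. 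This produces $\tau^k$ with $\|\tau^k\|_{-1,N} \le C(\dt^2 + h^m)$ together with a similar bound on $\phi^{-1}$ from~\eqref{scheme-BDF-SPFC-initial-2}. Note that since $\overline{e^k}=0$ for all $k$ (both schemes are mass conservative by Theorem~\ref{SPFC solvability} and the initial data match to the stated order — one absorbs the $O(h^m)$ mean discrepancy if needed), all $(-\Delta_N)^{-1}$ operations and $\|\cdot\|_{-1,N}$ norms are well defined on $e^k$.

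Next I would write the error equation: subtracting the scheme from the consistent exact-solution identity gives, schematically,
\begin{equation*}
\frac{\frac32 e^{k+1} - 2e^k + \tfrac12 e^{k-1}}{\dt} = \Delta_N\Big( -\mathcal{NL}^{k+1} + a\,e^{k+1} + 2\Delta_N(2e^k - e^{k-1}) - A\dt\Delta_N(e^{k+1}-e^k) + \Delta_N^2 e^{k+1}\Big) + \tau^k,
\end{equation*}
where $\mathcal{NL}^{k+1} := \nabla_N\cdot(|\nabla_N\Phi^{k+1}|^2\nabla_N\Phi^{k+1}) - \nabla_N\cdot(|\nabla_N\phi^{k+1}|^2\nabla_N\phi^{k+1})$ is the difference of the $4$-Laplacians. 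Then I take the discrete inner product with $(-\Delta_N)^{-1}e^{k+1}$ — the $H^{-1}$ test function dictated by the gradient-flow structure — and estimate term by term, mirroring the energy-stability computation in Theorem~\ref{SPFC-energy stability}. The BDF2 term yields the telescoping sum $\tfrac{1}{4\dt}(\|e^{k+1}\|_{-1,N}^2 - \|e^k\|_{-1,N}^2 + \cdots)$ of G-norm type (one can use the standard $\frac32 a - 2b + \frac12 c$ identity, i.e. the $\|2e^{k+1}-e^k\|^2$-style quantity, to get a genuine Lyapunov structure); the linear terms $a\,e^{k+1}$, $\Delta_N^2 e^{k+1}$ contribute $\|e^{k+1}\|_2^2$ and $\|\Delta_N e^{k+1}\|_2^2$; and the surface-diffusion term $\langle \Delta_N^3 e^{k+1}, (-\Delta_N)^{-1}e^{k+1}\rangle = \|\nabla_N\Delta_N e^{k+1}\|_2^2$ supplies the coercive $H_N^3$-seminorm that drives the $\ell^2(0,T;H_N^3)$ part of~\eqref{SPFC-convergence-0}. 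The concave explicit terms $2\Delta_N(2e^k-e^{k-1})$ and the Douglas–Dupont term are handled exactly as in the stability proof, using~\eqref{scheme-BDF-stability-7} and the condition $A\ge\varepsilon^2/16$ so that the negative contributions are absorbed. The right-hand side $\langle\tau^k, (-\Delta_N)^{-1}e^{k+1}\rangle \le \|\tau^k\|_{-1,N}\|e^{k+1}\|_{-1,N}$ is controlled by Cauchy–Schwarz and Young's inequality into $\frac14\|e^{k+1}\|_{-1,N}^2 + C(\dt^2+h^m)^2$.

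The main obstacle is the nonlinear term $\mathcal{NL}^{k+1}$. I would write $|\nabla_N\Phi^{k+1}|^2\nabla_N\Phi^{k+1} - |\nabla_N\phi^{k+1}|^2\nabla_N\phi^{k+1}$ in a form linear in $\nabla_N e^{k+1}$ with coefficients that are quadratic in $\nabla_N\Phi^{k+1}$ and $\nabla_N\phi^{k+1}$ — specifically using the elementary identity $|\mathbf{u}|^2\mathbf{u} - |\mathbf{v}|^2\mathbf{v} = \frac12(|\mathbf{u}|^2+|\mathbf{v}|^2)(\mathbf{u}-\mathbf{v}) + \frac12(|\mathbf{u}|^2-|\mathbf{v}|^2)(\mathbf{u}+\mathbf{v})$, which shows the leading part $\langle \mathcal{NL}^{k+1}, e^{k+1}\rangle = \langle \text{(difference flux)}, \nabla_N e^{k+1}\rangle$ has a monotone principal piece $\ge 0$ and a remainder bounded by $C(\|\nabla_N\Phi^{k+1}\|_\infty^2 + \|\nabla_N\phi^{k+1}\|_\infty^2)\cdot\|\nabla_N e^{k+1}\|_2^2$. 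Here the uniform-in-time $W_N^{1,6}$ bound from Remark~\ref{rem:W16 est} on $\phi^{k+1}$ (and the $C^0$/Sobolev bound on $\Phi$) is essential; combined with the discrete Sobolev embedding~\eqref{embedding-0} and the interpolation inequalities of Lemma~\ref{lem: inequality} this term becomes $\le C\|e^{k+1}\|_2^{2/3}\|\nabla_N\Delta_N e^{k+1}\|_2^{4/3} \le \eta\|\nabla_N\Delta_N e^{k+1}\|_2^2 + C_\eta\|e^{k+1}\|_2^2$ by Young, with $\eta$ small enough to be absorbed into the coercive $H_N^3$ term. (One must be slightly careful that the nonlinear error is tested against $e^{k+1}$ itself, not $(-\Delta_N)^{-1}e^{k+1}$, because of the $\Delta_N$ acting on the whole chemical potential — this is exactly the cancellation that makes the $H^{-1}$ inner product the right choice.) After assembling all bounds into an inequality of the form
\begin{equation*}
\frac{1}{4\dt}\Big(\|e^{k+1}\|_{-1,N}^2 - \|e^k\|_{-1,N}^2\Big) + (\text{nonneg. telescoping surplus}) + \tfrac12\|\nabla_N\Delta_N e^{k+1}\|_2^2 \le C\|e^{k+1}\|_2^2 + C\|e^k\|_{H_N^?}^2 + C(\dt^2+h^m)^2,
\end{equation*}
I would finish by using $\|e^{k+1}\|_2^2 \le \|e^{k+1}\|_{-1,N}\|\nabla_N e^{k+1}\|_2 \le \cdots$ (or an $H^{-1}$--$H^1$--$H^3$ interpolation) to re-express the $\|e^{k+1}\|_2^2$ term as $\eta\|\nabla_N\Delta_N e^{k+1}\|_2^2 + C\|e^{k+1}\|_{-1,N}^2$, absorbing the former, summing over $k$, invoking the small-$\dt$ condition to move the $\|e^{k+1}\|_{-1,N}^2$ term to the left, and applying the discrete Gr\"onwall inequality. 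This yields $\max_n\|e^n\|_{-1,N}^2 + \dt\sum\|\nabla_N\Delta_N e^m\|_2^2 \le C(\dt^2+h^m)^2$; a final elliptic-regularity / interpolation step ($\|e^n\|_2 \le C\|e^n\|_{-1,N}^{?}\|\nabla_N\Delta_N e^n\|_2^{?} + \cdots$ using summed $H_N^3$ control, or the observation that $\|e^n\|_2^2 \le \|e^n\|_{-1,N}\|\nabla_N e^n\|_2$ together with the $H^3$ bound) upgrades the $\ell^\infty$-in-time control from the $\|\cdot\|_{-1,N}$ norm to the $\ell^2$ norm, giving~\eqref{SPFC-convergence-0}.
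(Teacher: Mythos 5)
Your overall architecture (consistency, error equation, flux-difference decomposition linear in $\nabla_N e^{k+1}$, the $W_N^{1,6}$ bound from Remark~\ref{rem:W16 est}, the interpolation inequalities of Lemma~\ref{lem: inequality}, Young absorption, Gr\"onwall) matches the paper, but your choice of test function is wrong for the norms claimed in the theorem, and this is a genuine gap rather than a stylistic difference. The paper tests the error equation with $e^{k+1}$ itself, not with $(-\Delta_N)^{-1}e^{k+1}$. With your choice, the surface-diffusion contribution is $\left\langle \Delta_N^3 e^{k+1}, (-\Delta_N)^{-1}e^{k+1}\right\rangle = -\left\langle \Delta_N^2 e^{k+1}, e^{k+1}\right\rangle = -\|\Delta_N e^{k+1}\|_2^2$ (symbol $-\lambda^3\cdot\lambda^{-1}=-\lambda^2$), \emph{not} $\|\nabla_N\Delta_N e^{k+1}\|_2^2$ as you assert. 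So the coercive term you actually produce is only $\dt\,\|\Delta_N e^{k+1}\|_2^2$, an $\ell^2(0,T;H_N^2)$ control, and the telescoping quantity is $\|e^{k+1}\|_{-1,N}^2$. Neither of the two norms appearing in \eqref{SPFC-convergence-0} comes out of this computation. Your proposed repair at the end --- upgrading $\ell^\infty(H^{-1})$ to $\ell^\infty(\ell^2)$ by interpolating against the summed $H_N^3$ (or $H_N^2$) control --- cannot recover the optimal rate: the $\ell^2$-in-time bound only gives $\|\nabla_N\Delta_N e^n\|_2 \lesssim (\dt^2+h^m)\dt^{-1/2}$ at a fixed time level, so $\|e^n\|_2 \le \|e^n\|_{-1,N}^{3/4}\|\nabla_N\Delta_N e^n\|_2^{1/4}$ loses a factor $\dt^{-1/8}$, and the situation is worse with only $H_N^2$ control available.

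Testing with $e^{k+1}$ fixes all of this at once: the BDF2 stencil telescopes directly in $\|e\|_2^2$ (giving the $\ell^\infty(\ell^2)$ piece with no interpolation step), the term $\left\langle \Delta_N(\Delta_N^2 e^{k+1}), e^{k+1}\right\rangle$ yields $-\|\nabla_N\Delta_N e^{k+1}\|_2^2$ (the correct $H_N^3$ coercivity), and the nonlinear term, after summation by parts, is paired with $\nabla_N\Delta_N e^{k+1}$ and bounded by $\|{\cal N}\|_2\,\|\nabla_N\Delta_N e^{k+1}\|_2$ using exactly the $W_N^{1,6}$ estimate and Lemma~\ref{lem: inequality} as you describe; no monotonicity of the $4$-Laplacian is needed or used. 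The explicit concave term is then handled by $\|\Delta_N e^\ell\|_2^2 \le \|e^\ell\|_2^{2/3}\|\nabla_N\Delta_N e^\ell\|_2^{4/3}$ plus Young, absorbing into the $H_N^3$ term. One further small point: with $\ell^2$ testing the truncation error only needs to be measured in $\|\cdot\|_2$, not $\|\cdot\|_{-1,N}$, and the mean-zero bookkeeping you raise is unnecessary for the main energy estimate.
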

	\begin{proof} 
For $\Phi \in \mathcal{R}$, a careful consistency analysis indicates the following truncation error estimate: 
	\begin{align}
\frac{\frac32 \Phi^{k+1} - 2 \Phi^k + \frac12 \Phi^{k-1}}{\dt}  &= \Delta_N 
   \Bigl(  - \nabla_N \cdot ( | \nabla_N \Phi^{k+1} |^2 \nabla_N \Phi^{k+1} ) + a \Phi^{k+1}
	\nonumber 
	\\
& \quad + 2 \Delta_N ( 2 \Phi^k - \Phi^{k-1} ) - A \dt \Delta_N (\Phi^{k+1} - \Phi^k) + \Delta_N^2 \Phi^{k+1} \Bigr) + \tau^{k+1} ,  
	\label{SPFC-consistency-1}
	\end{align} 
with $\| \tau^{k+1} \|_2 \le C (\dt^2 + h^m)$. The derivation of~\eqref{SPFC-consistency-1} is accomplished with the help of the spectral approximation estimate \eqref{spectral-approximation} and other related estimates; the details are left to interested readers. 

The numerical error function is defined at a point-wise level: 
	\begin{eqnarray} 
e^k := \Phi^k - \phi^k ,  \quad \forall k \ge 0 .   
	\label{SPFC-error function-1}
	\end{eqnarray} 
In turn, subtracting the numerical scheme \eqref{scheme-BDF-SPFC-1} from \eqref{SPFC-consistency-1} gives 
	\begin{align}
\frac{\frac32 e^{k+1} - 2 e^k + \frac12 e^{k-1}}{\dt}  &=  \Delta_N \Bigl(  - \nabla_N \cdot {\cal N} (\Phi^{k+1}, \phi^{k+1}) + a e^{k+1} + 2 \Delta_N ( 2 e^k - e^{k-1} )  
	\nonumber 
	\\
& \quad - A \dt \Delta_N ( e^{k+1} - e^k)  + \Delta_N^2 e^{k+1} \Bigr) + \tau^{k+1} ,  
	\label{SPFC-consistency-2-1} 
	\end{align}
where
	\begin{equation}
{\cal N} (\Phi^{k+1}, \phi^{k+1})  :=  | \nabla_N \phi^{k+1} |^2 \nabla_N e^{k+1} + ( \nabla_N ( \Phi^{k+1} + \phi^{k+1} ) \cdot \nabla_N e^{k+1} )  \nabla_N \Phi^{k+1} .  
	\label{SPFC-consistency-2-2}     
	\end{equation} 
	
Taking a discrete inner product of \eqref{SPFC-consistency-2-1}-\eqref{SPFC-consistency-2-2} with $e^{k+1}$, with a repeated application of summation by parts, we get 
	\begin{align} 
\left\langle \frac32 e^{k+1} - 2 e^k + \frac12 e^{k-1} , e^{k+1} \right\rangle  &  + \dt \| \nabla_N \Delta_N e^{k+1}  \|_2^2 +   A \dt^2 \langle \Delta_N ( e^{k+1} - e^k ) , \Delta_N e^{k+1} \rangle +  a \dt \| e^{k+1} \|_2^2 
	\nonumber
	\\
&  \le \dt \langle {\cal N} (\Phi^{k+1}, \phi^{k+1}) ,  \nabla_N \Delta_N e^{k+1}  \rangle + 2 \dt \langle \Delta_N ( 2 e^k - e^{k-1} )  ,  \Delta_N e^{k+1}   \rangle 
	\nonumber 
	\\
& \quad + \dt \langle e^{k+1} , \tau^{k+1} \rangle .
	\label{SPFC-convergence-1} 
	\end{align} 
The temporal stencil term could be analyzed in a standard way:  
\begin{eqnarray} 
  \hspace{-0.3in}
   \left\langle \frac32 e^{k+1} - 2 e^k + \frac12 e^{k-1} , e^{k+1} \right\rangle 
   &=& \frac14 \Bigl( \| e^{k+1} \|_2^2 - \| e^k \|_2^2 + \| 2 e^{k+1} - e^k \|_2^2 
   - \| 2 e^k - e^{k-1} \|_2^2   \nonumber 
 \\
   && 
   + \| e^{k+1} - 2 e^k + e^{k-1} \|_2^2 \Bigr) . 
   \label{SPFC-convergence-2}
\end{eqnarray} 
The artificial diffusion on the left hand side of~\eqref{SPFC-convergence-1} could be handled as follows:
	\begin{equation} 
\left\langle( \Delta_N ( e^{k+1} - e^k ) , \Delta_N e^{k+1}  \right\rangle_2  \ge \frac12 \left( \| \Delta_N e^{k+1} \|_2^2  - \| \Delta_N e^k \|_2^2 \right) .  
	\label{SPFC-convergence-3}
	\end{equation} 
The term associated with the local truncation error could be bounded with the help of Cauchy inequality: 
\begin{eqnarray} 
  \langle e^{k+1} , \tau^{k+1} \rangle  \le  \| e^{k+1}\|_2 \cdot \|  \tau^{k+1} \|_2  
  \le  \frac12 ( \| e^{k+1}\|_2^2 + \|  \tau^{k+1} \|_2^2 ) .  \label{SPFC-convergence-4}
\end{eqnarray} 

The term associated with the concave diffusion could be bounded in a similar way: 
\begin{eqnarray} 
  2 \langle \Delta_N ( 2 e^k - e^{k-1} )  ,  \Delta_N e^{k+1}   \rangle   
  \le 3 \|  \Delta_N e^{k+1} \|_2^2  + 2 \| \Delta_N e^k \|_2^2 
  + \| \Delta_N e^{k-1} \|_2^2 .  \label{SPFC-convergence-4-2}
\end{eqnarray} 
On the other hand, an application of the preliminary inequality~\eqref{inequality-0-1} reveals that
\begin{eqnarray} 
\| \Delta_N e^\ell \|_2^2 \le \| e^\ell \|_2^{2/3} \cdot \| \nabla_N \Delta_N e^\ell \|_2^{4/3} \le C \| e^\ell \|_2^2 + \frac{1}{24} \| \nabla_N \Delta_N e^\ell \|_2^2 , 
   \label{SPFC-convergence-4-3}    
\end{eqnarray} 
for $\ell=k-1, k, k+1$, in which the Young's inequality has been applied at the second step. Its substitution into~\eqref{SPFC-convergence-4-2} yields 
\begin{eqnarray} 
  2 \langle \Delta_N ( 2 e^k - e^{k-1} )  ,  \Delta_N e^{k+1}   \rangle   
  &\le&  \frac18 \|  \nabla_N \Delta_N e^{k+1} \|_2^2  + \frac{1}{12} \| \nabla_N \Delta_N e^k \|_2^2 + \frac{1}{24} \| \nabla_N \Delta_N e^{k-1} \|_2^2 \nonumber 
\\
  &&
  + C ( \| e^{k+1} \|_2^2 + \| e^k \|_2^2 + \| e^{k-1} \|_2^2 )  . 
   \label{SPFC-convergence-4-4}
\end{eqnarray}

For the nonlinear error term, we begin with the first term. An application of discrete H\"older inequality indicates that  
\begin{eqnarray} 
  \| | \nabla_N \phi^{k+1} |^2 \nabla_N e^{k+1}  \|_2 
  \le \|  \nabla_N \phi^{k+1}  \|_6^2 \cdot \| \nabla_N e^{k+1}  \|_6 
  \le  C \tilde{C}_1^2 \| \nabla_N e^{k+1}  \|_6  
  \le C \tilde{C}_1^2 \| \Delta_N e^{k+1}  \|_2 , 
   \label{SPFC-convergence-5-1}
\end{eqnarray} 
in which the preliminary estimate~\eqref{SPFC-W16 est-0} (which comes from the uniform in time $H_N^2$ bound~\eqref{SPFC-H2 stab-0}) and~\eqref{inequality-0-1} have been applied in the second and third steps, respectively. A bound for the second nonlinear error term could be derived in a similar way: 
\begin{eqnarray} 
  \| ( \nabla_N ( \Phi^{k+1} + \phi^{k+1} ) \cdot \nabla_N e^{k+1} )  \nabla_N \Phi^{k+1}  \|_2  \le C ( \tilde{C}_1^2 + (C^*)^2 ) \| \Delta_N e^{k+1}  \|_2 , 
   \label{SPFC-convergence-5-2}
\end{eqnarray} 
in which the constant $C^*$ corresponds to the preliminary estimate, $\|  \nabla_N \Phi^{k+1} \|_6 \le C^*$, for the exact solution $\Phi$. As a result, we get 
\begin{eqnarray} 
  \| {\cal N} (\Phi^{k+1} , \phi^{k+1} ) \|_2   
   \le C ( \tilde{C}_1^2 + (C^*)^2 ) \| \Delta_N e^{k+1}  \|_2   
   \le \tilde{C}_2 \| e^{k+1}  \|_2^{1/3} \cdot \| \nabla_N \Delta_N e^{k+1} \|_2^{2/3}  ,  \label{SPFC-convergence-5-3}
\end{eqnarray}   
with $\tilde{C}_2 = C ( (C^*)^2 + \tilde{C}_1^2 )$, in which the preliminary inequality~\eqref{inequality-0-1} has been applied in the second step. In turn, a bound for the nonlinear error inner product term becomes available
\begin{eqnarray} 
  \langle {\cal N} (\Phi^{k+1}, \phi^{k+1}) ,  \nabla_N \Delta_N e^{k+1}  \rangle  
  &\le& \| {\cal N} (\Phi^{k+1} , \phi^{k+1} ) \|_2  \cdot \| \nabla_N \Delta_N e^{k+1} \|_2  \nonumber 
\\
  &\le&  \tilde{C}_2  
  \| e^{k+1} \|_2^{1/3} \cdot   
  \| \nabla_N \Delta_N e^{k+1} \|_2^{5/3} )  \nonumber 
\\
  &\le& 
  \tilde{C}_{3} \| e^{k+1} \|_2^2   
  + \frac14 \| \nabla_N \Delta_N e^{k+1} \|_2^2  ,  
  \label{SPFC-convergence-5-4}
\end{eqnarray} 
with the Young's inequality applied in the last step. 

Subsequently, a substitution of~\eqref{SPFC-convergence-2}-\eqref{SPFC-convergence-4}, \eqref{SPFC-convergence-4-4} and \eqref{SPFC-convergence-5-4} into \eqref{SPFC-convergence-1} results in  
	\begin{align}
{\cal H}^{k+1} - {\cal H}^k & + \dt \| \nabla_N \Delta_N e^{k+1} \|_2^2    
	\nonumber 
	\\
& \le( 2 \tilde{C}_3 + C + 1 ) \dt \| e^{k+1} \|_2^2  + C \dt (  \| e^k \|_2^2 + \| e^{k-1} \|_2^2 )  + \dt \| \tau^{k+1} \|_2^2 ,  
	\label{SPFC-convergence-6-1} 
	\end{align}
where
	\begin{equation}
{\cal H}^{k+1} := \frac12 ( \| e^{k+1} \|_2^2 + \| 2 e^{k+1} - e^k \|_2^2 )  + A \dt^2 \| \Delta_N e^{k+1} \|_2^2 . 
    \label{SPFC-convergence-6-2} 
	\end{equation}  
Therefore, with an application of discrete Gronwall inequality, and making use of the fact that $\| \tau^{k+1} \|_2 \le C (\dt^2 + h^m)$, we arrive at 
\begin{eqnarray} 
   {\cal H}^{k+1}  + \dt \sum_{i=1}^{k+1} \| \nabla_N \Delta_N e^i \|_2^2  \le \hat{C} ( \dt^4 + h^{2m}) ,   \label{SPFC-convergence-6-3} 
\end{eqnarray}  
with $\hat{C}$ independent on $\dt$ and $h$. In turn, the desired convergence estimate is available 
\begin{eqnarray} 
   \| e^{k+1} \|_2  + \Bigl( \dt \sum_{i=1}^{k+1} \| \nabla_N \Delta_N e^i \|_2^2  \Bigr)^{1/2} \le C \hat{C}^{1/2} ( \dt^2 + h^m) ,   \label{SPFC-convergence-6-4} 
\end{eqnarray}  
This completes the proof of Theorem~\ref{thm:convergence}. 
	\end{proof}

	\section{Preconditioned steepest descent solver}
	\label{sec:PSD}
In this section we describe a preconditioned steepest descent (PSD) algorithm following the practical and  theoretical framework in~\cite{feng2017preconditioned}. We give the details for the first proposed BDF2 scheme~\eqref{scheme-BDF-SPFC-1}; the details for the second~\eqref{scheme-BDF-SPFC-1-alt} and related schemes will be similar. We first note that~\eqref{scheme-BDF-SPFC-1} can be recast as a minimization problem for~\eqref{eqn:min-energy-1}. One observes that the fully discrete scheme \eqref{scheme-BDF-SPFC-1} is the discrete variation of the strictly convex energy \eqref{eqn:min-energy-1} set equal to zero. The nonlinear scheme at a fixed time level may be expressed as~\eqref{eqn:operator-1}-\eqref{eqn:operator-2}. 

The essential idea of the PSD solver is to use a linearized version of the nonlinear operator as a pre-conditioner, or in other words, as a metric for choosing the search direction. In the following, we use the notation introduced in Theorem~\ref{SPFC solvability}. A linearized version of the nonlinear operator $\mathcal{N}_N$, denoted as $\mathcal{L}_N: \mathring{\mathcal{G}}_N \to \mathring{\mathcal{G}}_N$,  is defined as follows: 	\begin{equation*}
{\mathcal L}_N [\psi] :=  \frac{3}{2} (-\Delta_N)^{-1} \psi   - \dt \Delta_N \psi + a \dt \psi  
  - A \dt^2 \Delta_N \psi + \dt \Delta_N^2 \psi .
	\end{equation*}
In turn, this positive, symmetric operator could be used as a pre-conditioner for the numerical iteration. Specifically, this ``metric" is used to find an appropriate search direction for the steepest descent solver~\cite{feng2017preconditioned}. Given the current iterate $\phi_n\in \mathcal{G}_N$, we define the following \emph{search direction} problem: find $d_n \in \mathcal{G}_N$ such that
\[
{\mathcal L}_N [d_n] = r_n-\overline{r_n} , \quad r_n := f-\mathcal{N}_N [\phi_n],
\]
where $r_n$ is the nonlinear residual of the $n^{\rm th}$ iterate $\phi_n$. Of course, this equation can be efficiently solved using the Fast Fourier Transform (FFT).

Subsequently, the next iterate is obtained as
	\begin{equation}
\phi_{n+1} := \phi_n + \alpha_n d_n,
	\end{equation}
where $\alpha_n\in\mathbb{R}$ is the unique solution to the steepest descent line minimization problem
	\begin{equation}
\alpha_n := \operatorname*{argmin}_{\alpha\in\mathbb{R}} F_N [\phi_n + \alpha d_n]= \operatorname*{argzero}_{\alpha\in\mathbb{R}}\delta F_N [\phi_n + \alpha d_n](d_n) .
	\label{eqn-search}
	\end{equation}
Following similar techniques reported in~\cite{feng2017preconditioned} for the finite difference numerical method, a theoretical analysis ensures a geometric convergence of the iteration sequence: 
	\begin{equation} 
\| \phi_n - \phi^{k+1} \|_{H_N^2} \le \beta^n \| \phi_0 - \phi^{k+1} \|_{H_N^2} , \quad 0 < \beta< 1 , 
	\end{equation} 
where $\beta$ is independent of $N$ and $\phi^{k+1}$ stands for the exact numerical solution to \eqref{scheme-BDF-SPFC-1} at time level $k+1$, \emph{i.e.},  $\mathcal{N}_N [\phi^{k+1}] = f$.

	\begin{rem}
We note that our PSD method can be viewed as a quasi-Newton method, with an orthogonalization (line search) step. Indeed, $\mathcal{L}_N$ may be viewed as an approximation of the Jacobian. To fit more neatly into the framework of a traditional quasi-Newton method, one could just take step size equal to 1, so that the correction is just $\phi_{n+1} := \phi_n +  d_n$. Alternatively, one can just use quadratic line search methods to obtain an approximation of $\alpha_n$, call it $\alpha_n^{\rm q}$, to obtain a good-enough approximation that one can still prove a geometric convergence rate that is independent of $N$.
	\end{rem}

	\begin{rem} 
\textcolor{black}{There have been quite a few existing works~\cite{fengW17c, qiao17, shen12, wang10a} on the the slope-selection (SS) model for epitaxial thin film growth, including both numerical analysis and implementation issues. These existing works only deal with the 4-Laplacian term in an $L^2$ gradient flow. This work is, to our knowledge, the first attempt to address the energy stability for a 4-Laplacian energy term in an $H^{-1}$ gradient flow. Because of its highly nonlinear nature, a 4-Laplacian term in an $H^{-1}$ flow turns out to be much more challenging than in an $L^2$ flow, at both the theoretical and numerical levels. Furthermore, most existing works for the 4-Laplacian problem have been focused on either the finite difference and finite element spatial approximations, while this work reports a Fourier pseudo-spectral approach for the first time. Such a spectral approach has the advantage of avoiding complicated staggered mesh points in the evaluation of the gradient variables, which may lead to much reduced numerical accuracy for a complicated physical energy (see a related report~\cite{fengW17b} for the functionalized Cahn-Hilliard model). Meanwhile, the analysis for such a global spatial discretization becomes much more involved, as described in this work.} 
	\end{rem} 

	\section{Numerical results}
	\label{sec:numerical results}

	\subsection{Convergence test for the numerical scheme}

In this subsection we perform some numerical experiments to verify the accuracy order of the proposed numerical scheme. In particular, it is observed that the search direction and Poisson-like equations can also be efficiently solved by using the Fourier pseudo-spectral method (see the related discussions in~\cite{Boyd2001, cheng2015fourier, Gottlieb1977, HGG2007}) and Fast Fourier Transform (FFT). 

To test the convergence rate, we choose the following exact solution for \eqref{equation-SPFC} on the square domain $\Omega=(0, 1)^2$:
	\begin{equation}
	\label{eqn:init1}
\phi_e(x,y,t) = \frac{1}{2\pi}\sin (2 \pi x) \cos (2 \pi y) \cos(t).
	\end{equation}
We take $a =0.975$, and we choose the artificial diffusion coefficient as $A = 0.25$. The final time is taken as $T=0.16$.
        	
	\begin{figure}
	\centering
    \hbox{ 
	\includegraphics[width=3in,height=2.8in]{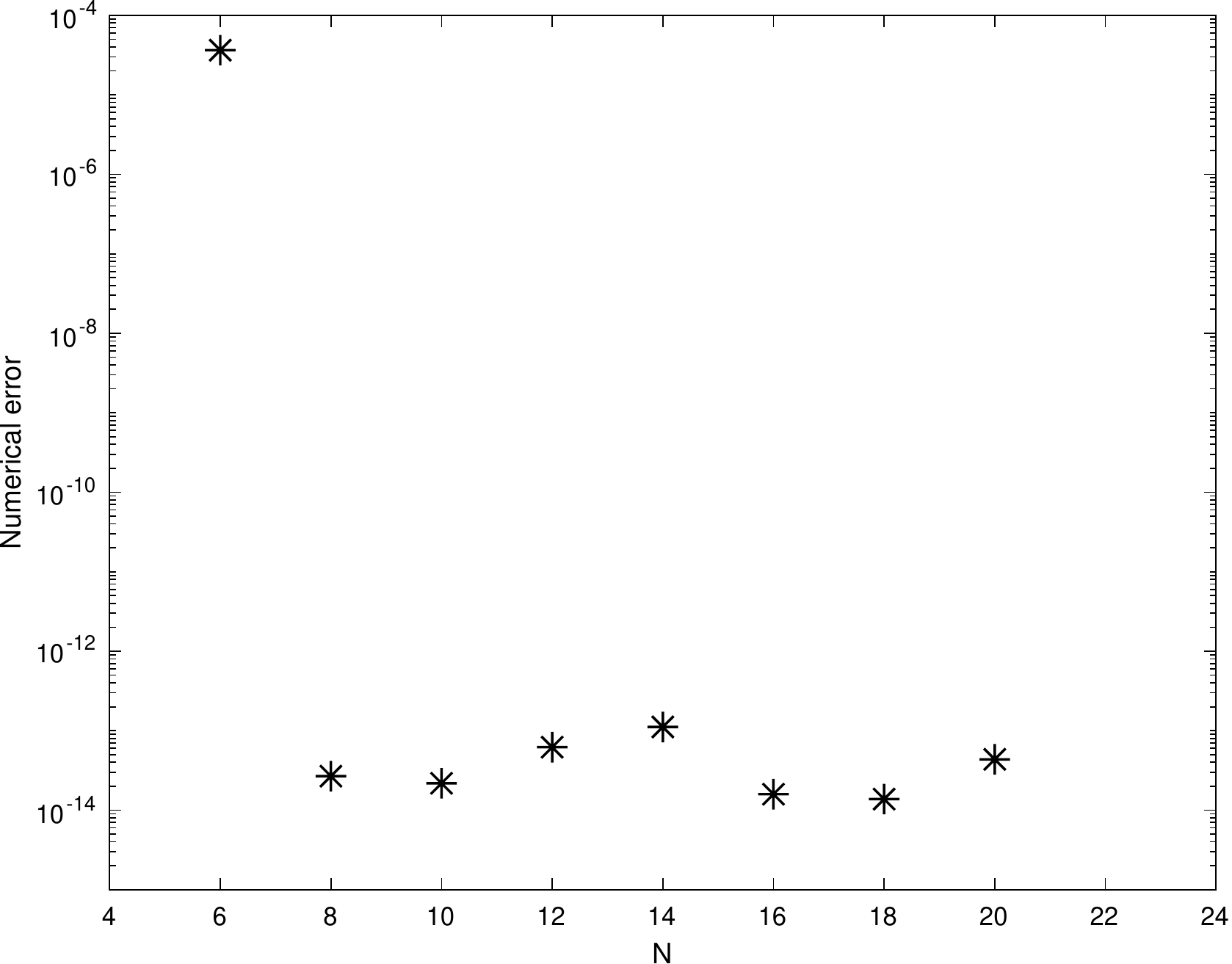}  \hskip 0.2cm
\includegraphics[height=2.8in,width=3in]{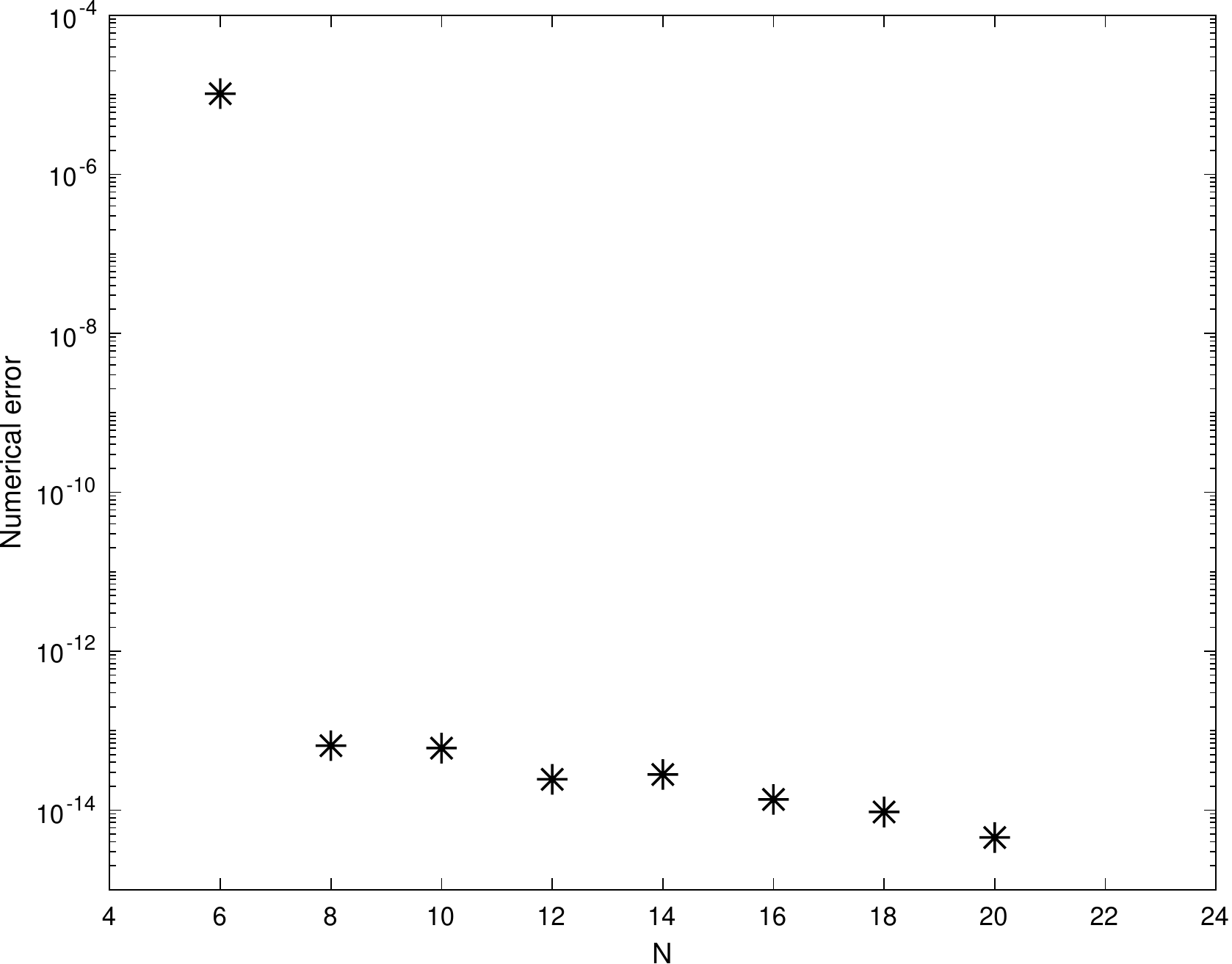} } 	
\caption{Left: Discrete $\ell^2$ numerical errors for the phase variable at $T=0.16$, plotted versus $N$, the number of spatial grid point, for the fully discrete pseudospectral scheme~\eqref{scheme-BDF-SPFC-1}. Right: The same numerical error plot for the other proposed scheme~\eqref{scheme-BDF-SPFC-1-alt}.  The time step size is fixed as $\dt = 10^{-4}$. An apparent spatial spectral accuracy is observed.}     
	\label{fig1}
	\end{figure}

To investigate the accuracy in space, we fix {\color{black}$\dt = 10^{-4}$, and absorb the temporal discretization errors into the external source term, so that the spatial approximation error dominates the overall numerical error.} We compute solutions with grid sizes {\color{black}$N = 6$ to $N=20$ in increments of 2}, and we solve up to time $T = 0.16$. The $\ell^2$ numerical errors, computed by the proposed numerical schemes~\eqref{scheme-BDF-SPFC-1} and \eqref{scheme-BDF-SPFC-1-alt}, are displayed in Fig.~\ref{fig1}. The spatial spectral accuracy is apparently observed for the phase variable. Due to the {\color{black}round-off errors}, a saturation of spectral accuracy appears with an increasing $N$, for both schemes.

	\begin{figure}
	\centering
        \hbox{ 
	\includegraphics[width=3in,height=2.8in]{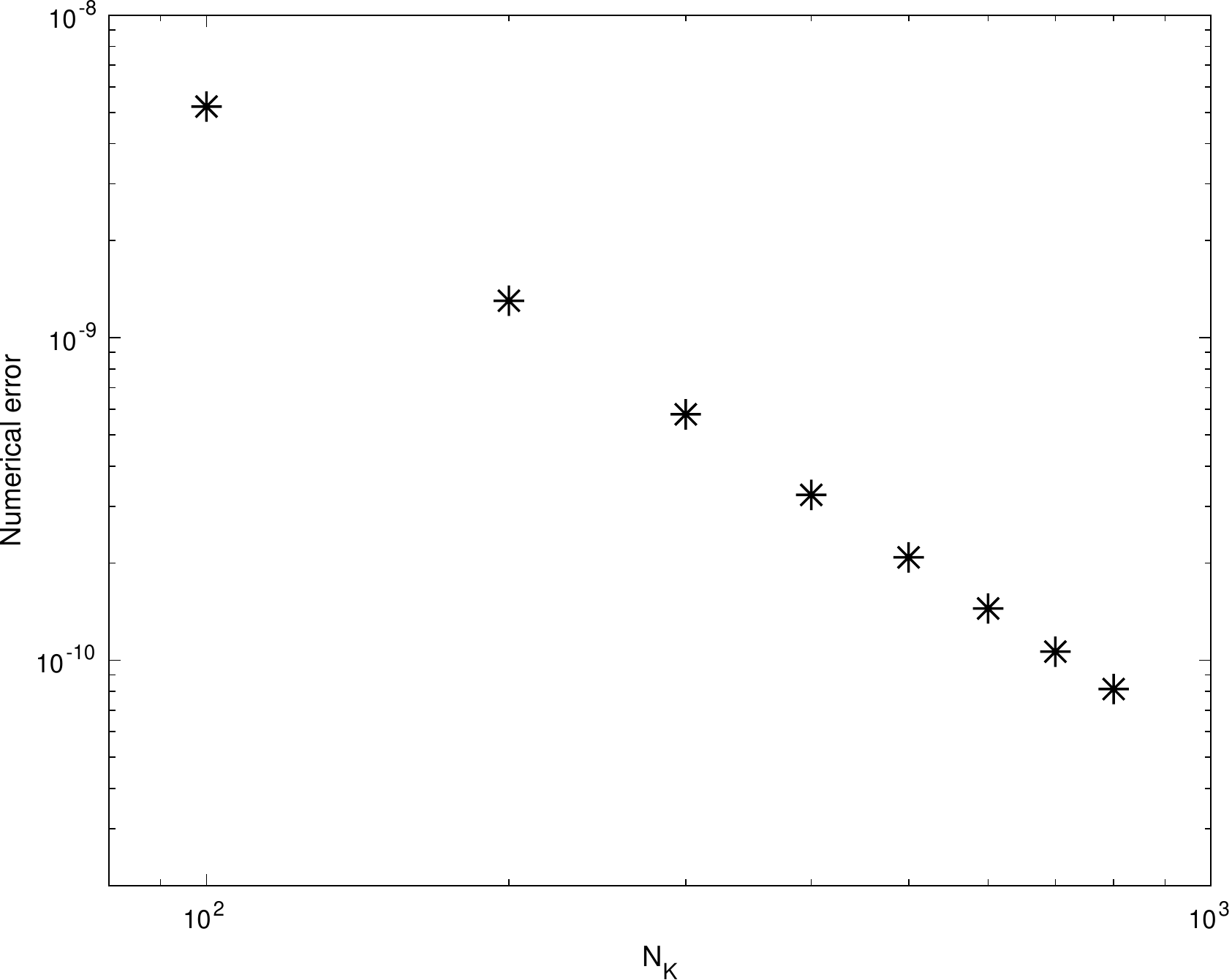}  \hskip 0.2cm
\includegraphics[height=2.8in,width=3in]{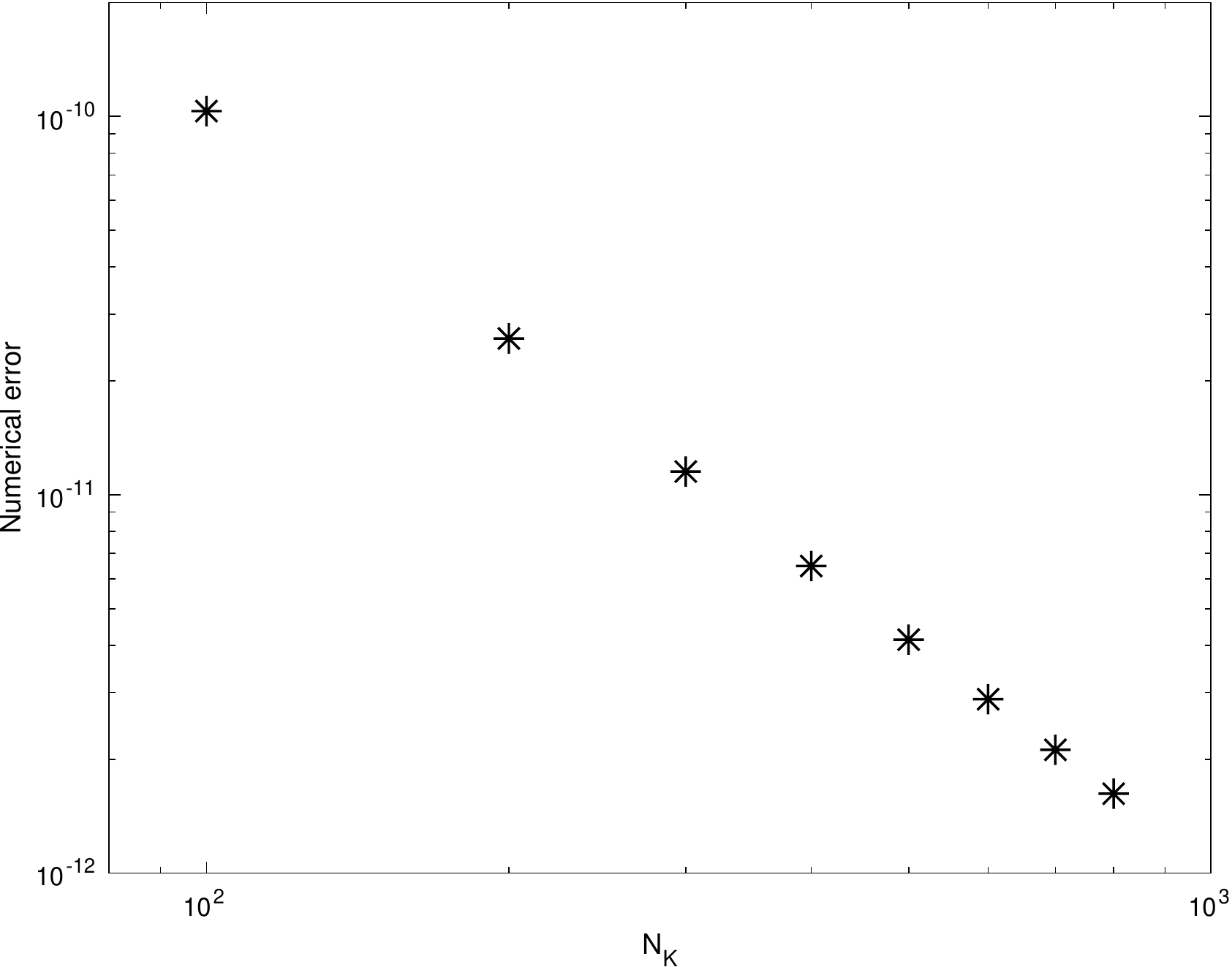} } 	
\caption{Left: Discrete $\ell^2$ numerical errors for the phase variable at $T=0.16$, plotted versus $N_K$, the number of time steps, for the fully discrete pseudo spectral scheme~\eqref{scheme-BDF-SPFC-1} (BDF2-ES-1). Right: The same numerical error plot for the second scheme~\eqref{scheme-BDF-SPFC-1-alt} (BDF2-ES-2).  Second-order temporal accuracy is observed. Note that the error for method 2, BDF2-ES-2, is significantly  smaller, as expected.}     
	\label{fig2}
	\end{figure}

To explore the temporal accuracy, we fix the spatial resolution as $N=128$ so that the numerical error is dominated by the temporal ones. We compute solutions with a sequence of time step sizes, $\dt = \frac{T}{N_k}$, with $N_k=100$ to $N_k=800$ in increments of 100, and the same final time $T=0.16$. Fig.~\ref{fig2} shows the discrete $\ell^2$ norms of the errors between the numerical and exact solutions, computed by the proposed numerical schemes~\eqref{scheme-BDF-SPFC-1} (BDF2-ES-1) and \eqref{scheme-BDF-SPFC-1-alt} (BDF2-ES-2), respectively. A clear second order accuracy has been demonstrated for both schemes. Moreover, while both schemes preserve second order temporal accuracy, it is interesting to observe that, the scale of the numerical error for the second scheme~\eqref{scheme-BDF-SPFC-1-alt} is lower than the first scheme~\eqref{scheme-BDF-SPFC-1}. Such a phenomenon comes from a subtle fact that, the second scheme~\eqref{scheme-BDF-SPFC-1-alt} takes a closer form than the standard BDF2 scheme. On the other hand, the standard BDF2 scheme gives smaller truncation errors than the ones with extrapolation and regularization terms, although a theoretical justification of its energy stability is not available.

\subsection{Numerical simulation of square symmetry patterns} 

The $4$-Laplacian term in \eqref{equation-SPFC} gives preference to rotationally invariant patterns with square symmetry. In this subsection, we perform two-dimensional numerical simulations showing the emergence of these patterns. The rest of the parameters are given by $a = 0.5$ and $\Omega =(0,L)^2$, with $L=100$. The initial data for the simulations are given by 
\begin{equation}
	\label{eqn:init2}
 \phi^0_{i,j}=0.05\cdot(2r_{i,j}-1),
	\end{equation}
where the $r_{i,j}$ are uniformly distributed random numbers in $[0, 1]$. In addition, 
we add nucleation sites at specific locations in the domain, with magnitude 10, at (50,50) as an example of one nucleation site, at (25,25), (25,75), (75,25), and (75,75) as another example of four nucleation sites. For the temporal step size $\dt$, we use increasing values of $\dt$ in the time evolution: $\dt = 0.05$ on the time interval $[0,1000]$ and $\dt = 0.1$ on the time interval $[1000, 9000]$. Whenever a new time step size is applied, we initiate the two-step numerical scheme by  taking $\phi^{-1} = \phi^0$, with the initial data $\phi^0$ given by the final time output of the last time period. The time snapshots of the evolution by using the given parameters are presented in Figures~\ref{fig:long-time-spfc-one} (one nucleation site) and \ref{fig:long-time-spfc-four} (four nucleation sites). These tests confirm the emergence of the rotationally invariant square-symmetry patterns in the density field.

	\begin{figure}[h]
	\begin{center}
	\begin{subfigure}{0.48\textwidth}
\includegraphics[height=0.48\textwidth,width=0.48\textwidth]{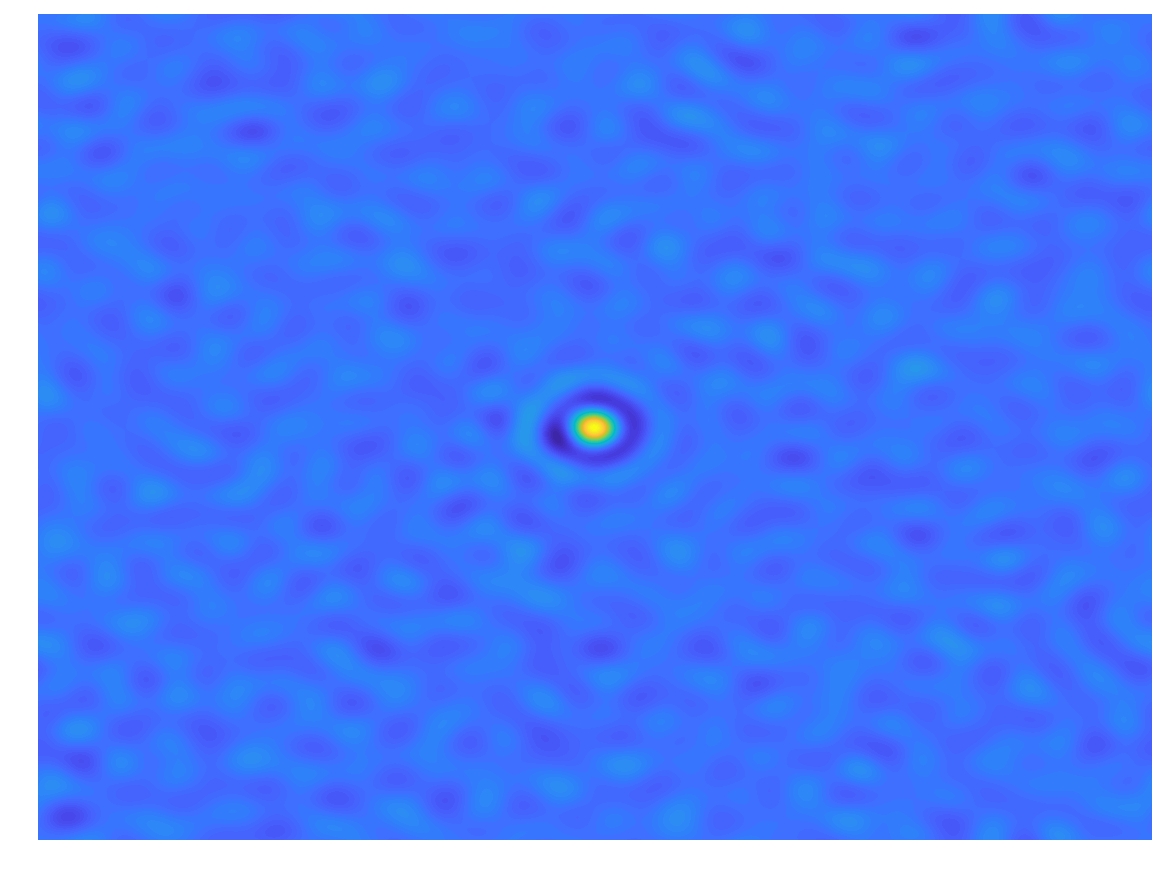} 
\includegraphics[height=0.48\textwidth,width=0.48\textwidth]{phi_T2_t1.jpg} 		
\caption*{$t=1, 10$}
	\end{subfigure}

	\begin{subfigure}{0.48\textwidth}
\includegraphics[height=0.48\textwidth,width=0.48\textwidth]{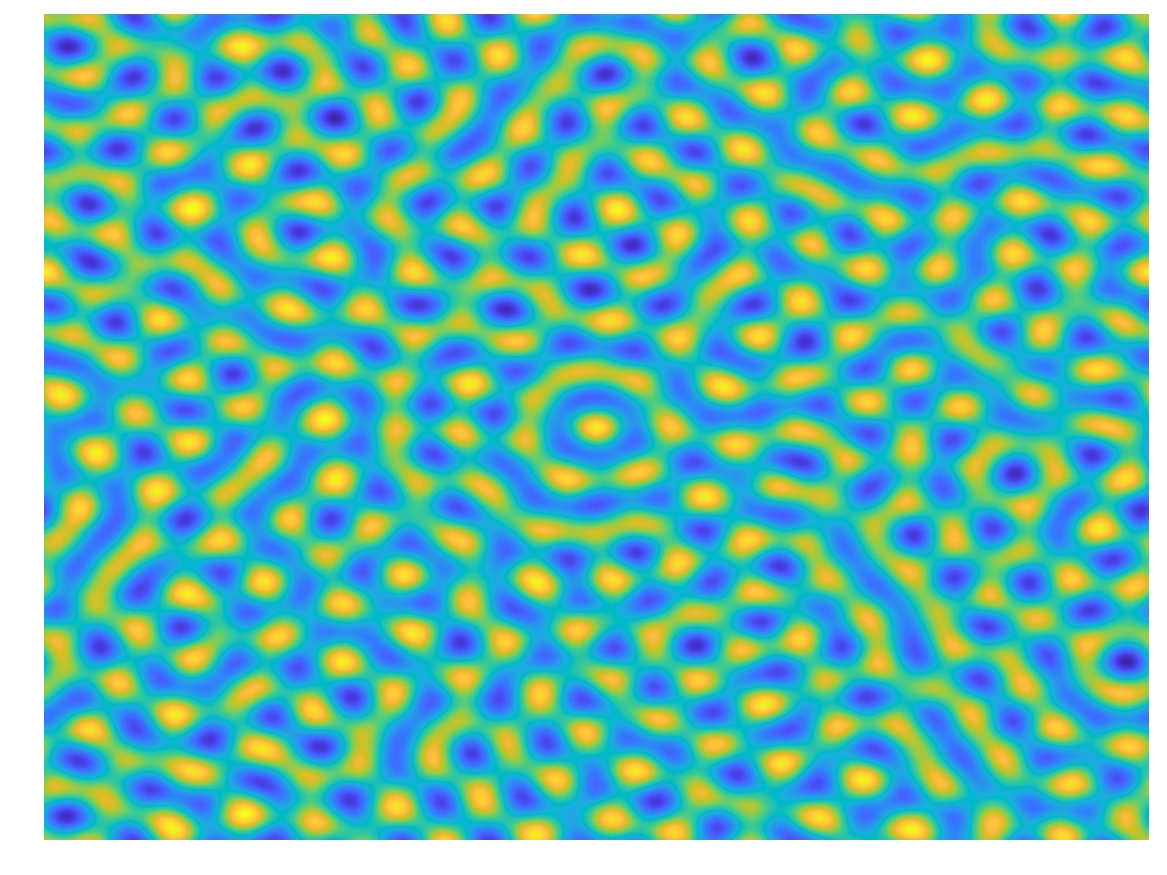} 
\includegraphics[height=0.48\textwidth,width=0.48\textwidth]{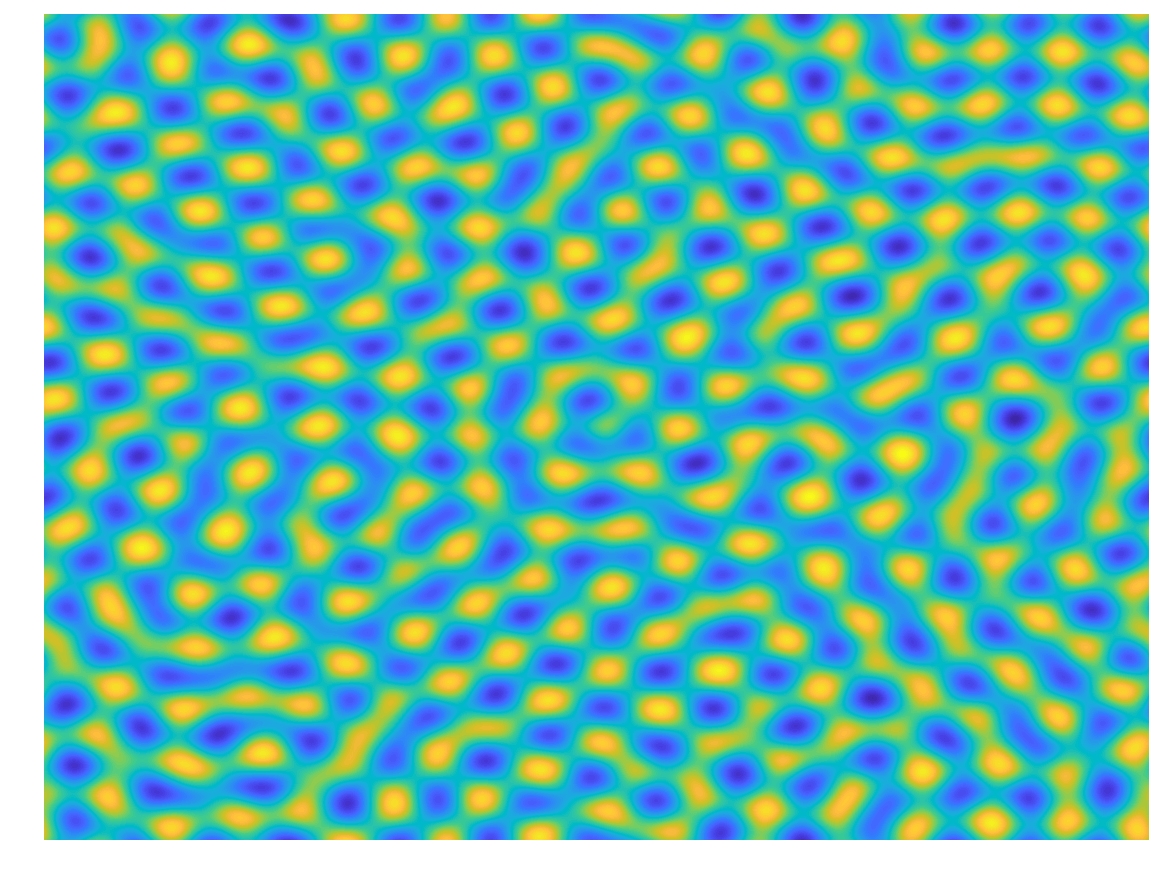}
\caption*{$t=20, 40$}
	\end{subfigure}
	
	\begin{subfigure}{0.48\textwidth}
\includegraphics[height=0.48\textwidth,width=0.48\textwidth]{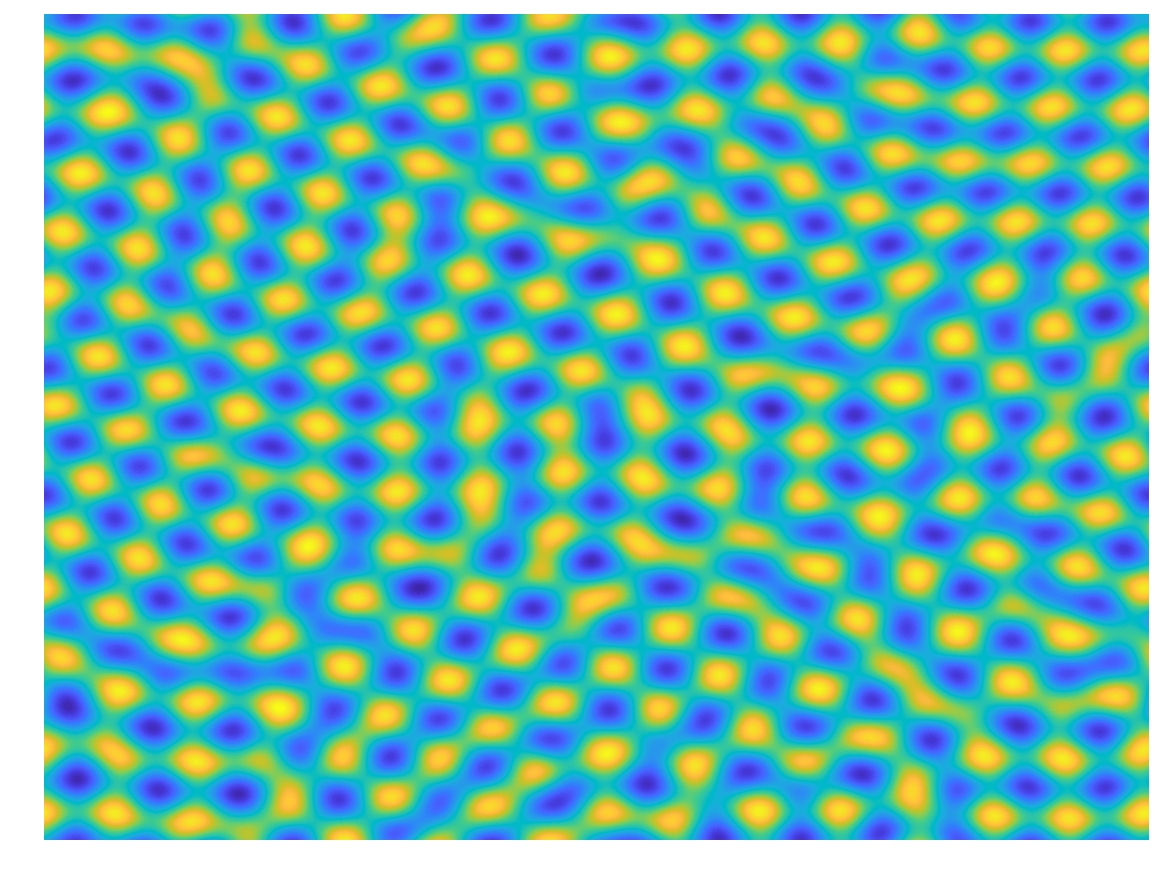}
\includegraphics[height=0.48\textwidth,width=0.48\textwidth]{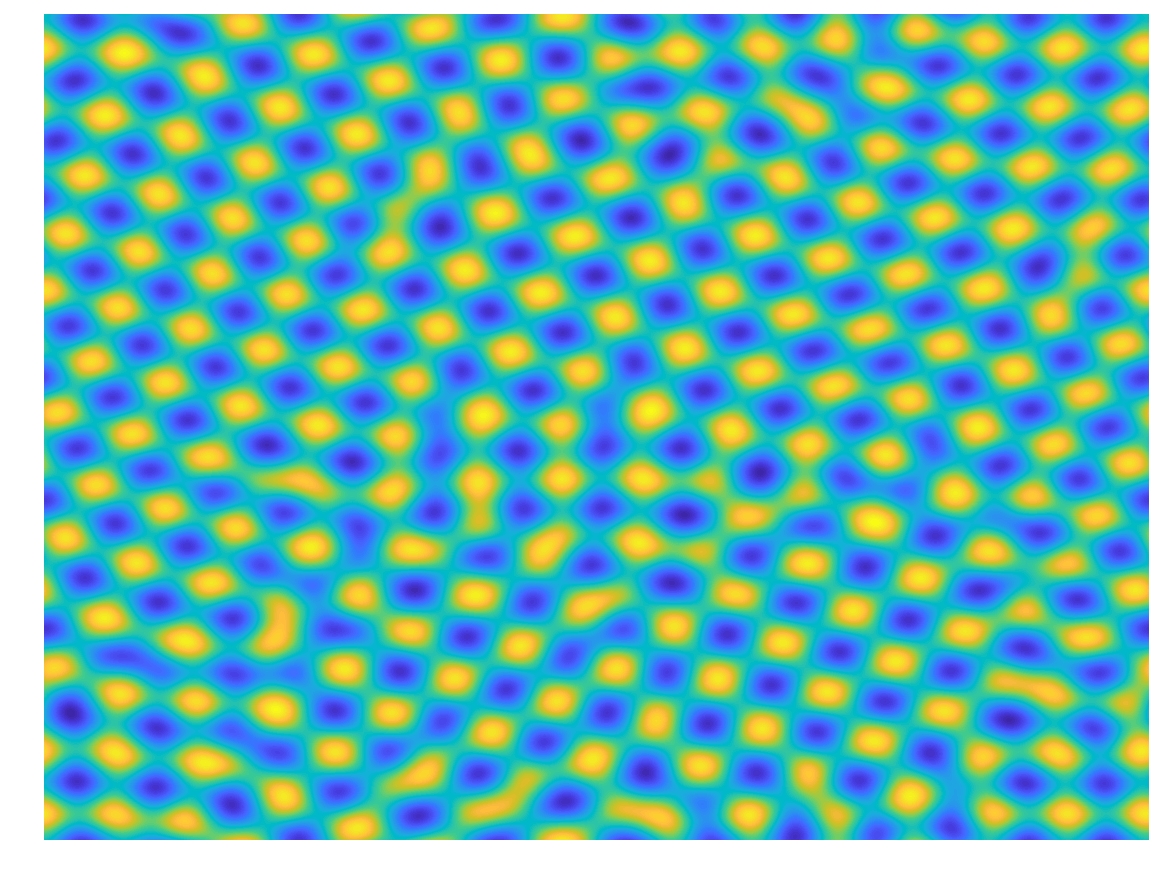}
\caption*{$t=100,200$}
	\end{subfigure}
	
	\begin{subfigure}{0.48\textwidth}
\includegraphics[height=0.48\textwidth,width=0.48\textwidth]{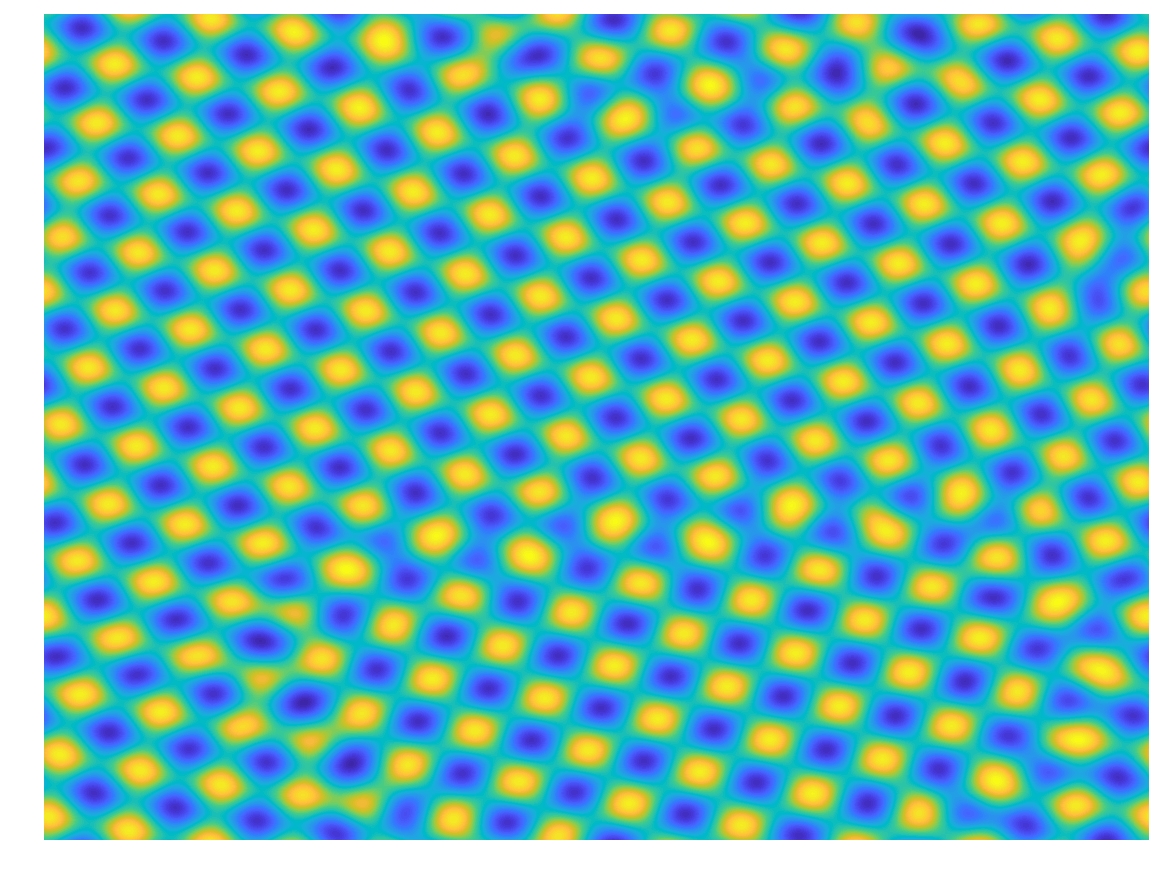} 
\includegraphics[height=0.48\textwidth,width=0.48\textwidth]{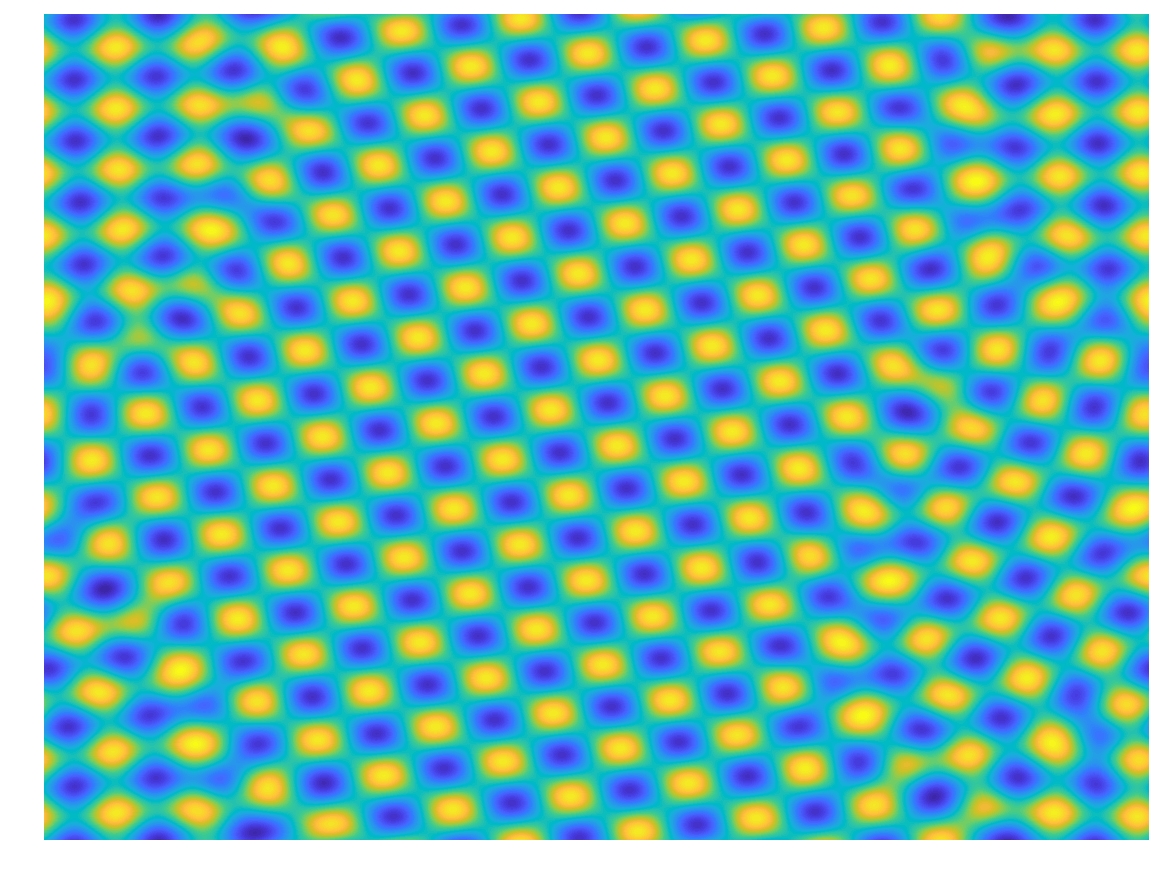}		
\caption*{$t=3000, 9000$}
	\end{subfigure}
	
\caption{Time snapshots of the evolution for square phase field crystal model, with one nucleation site at $(50, 50)$. The time sequence for the snapshots is $t=1$, 10, 20, 40,  100, 200, 3000, and 9000. The parameters are $a =0.5, \Omega=[0, 100]^2$.}
	\label{fig:long-time-spfc-one}
	\end{center}
	\end{figure}

	\begin{figure}[h]
	\begin{center}
	\begin{subfigure}{0.48\textwidth}
\includegraphics[height=0.48\textwidth,width=0.48\textwidth]{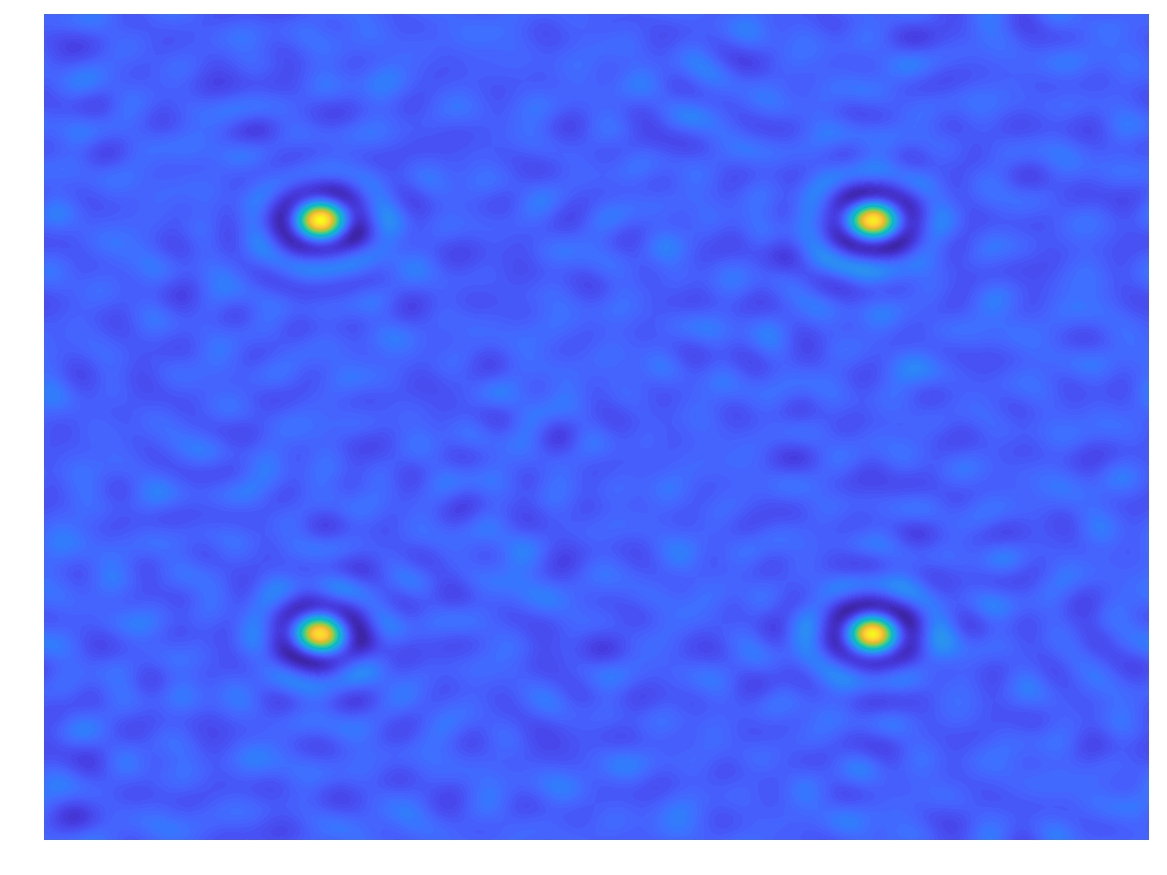} 
\includegraphics[height=0.48\textwidth,width=0.48\textwidth]{phi_T4_t1.jpg} 
\caption*{$t=1, 10$}
	\end{subfigure}
	
	\begin{subfigure}{0.48\textwidth}
\includegraphics[height=0.48\textwidth,width=0.48\textwidth]{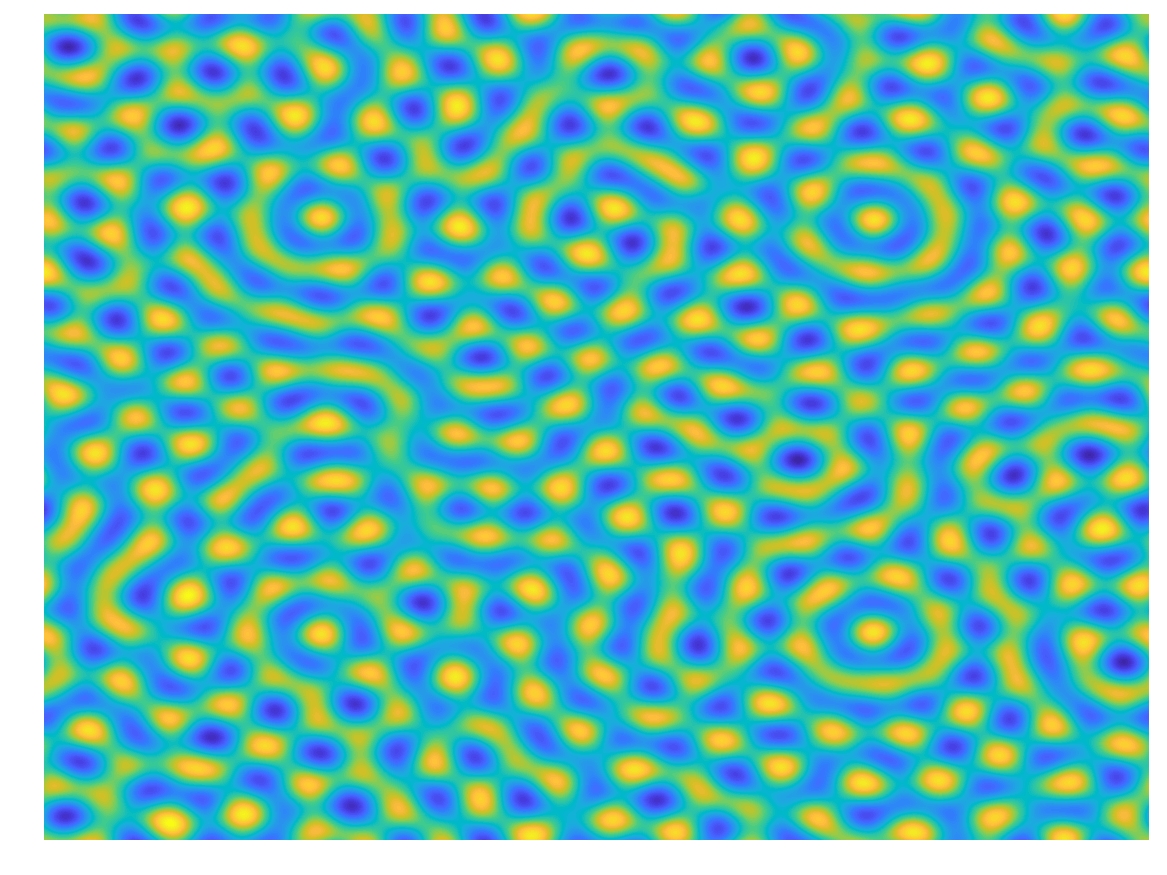} 		\includegraphics[height=0.48\textwidth,width=0.48\textwidth]{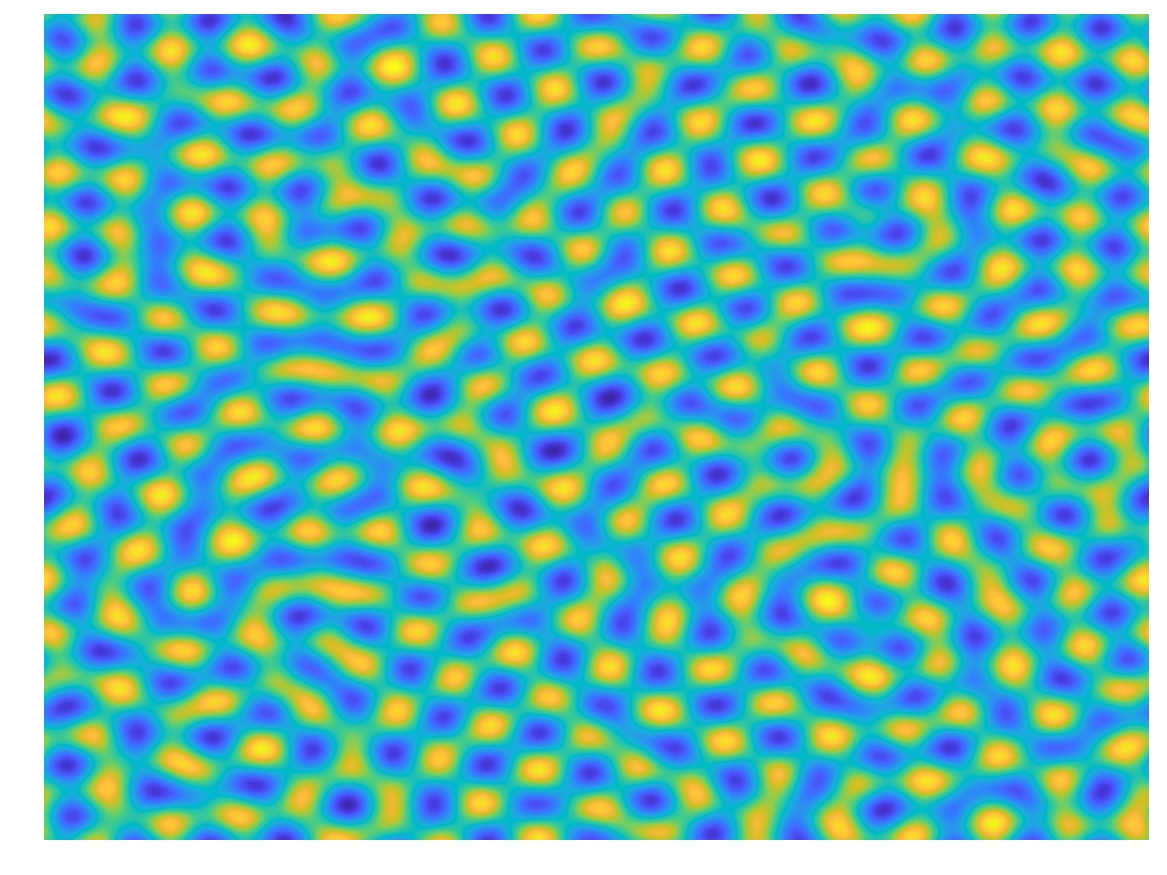}
\caption*{$t=20, 40$}
	\end{subfigure}
	
	\begin{subfigure}{0.48\textwidth}
	\includegraphics[height=0.48\textwidth,width=0.48\textwidth]{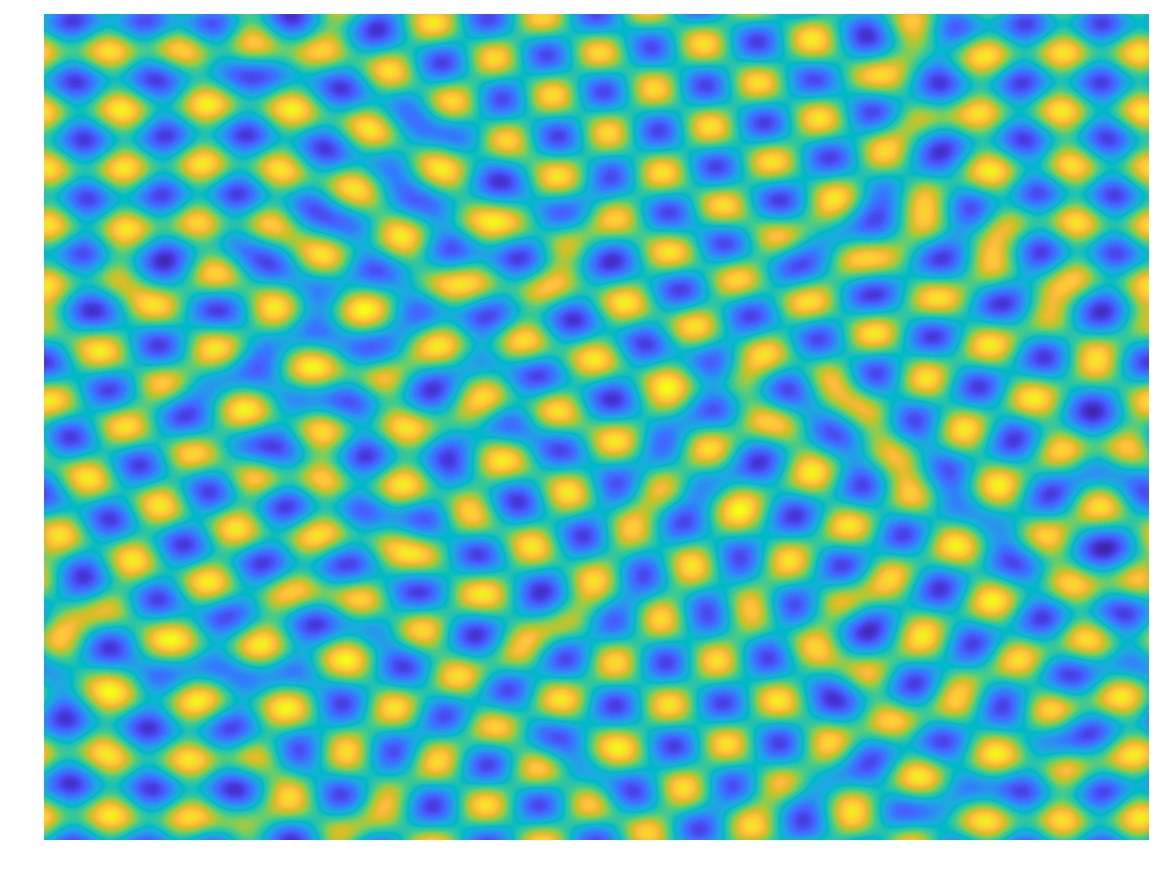}			\includegraphics[height=0.48\textwidth,width=0.48\textwidth]{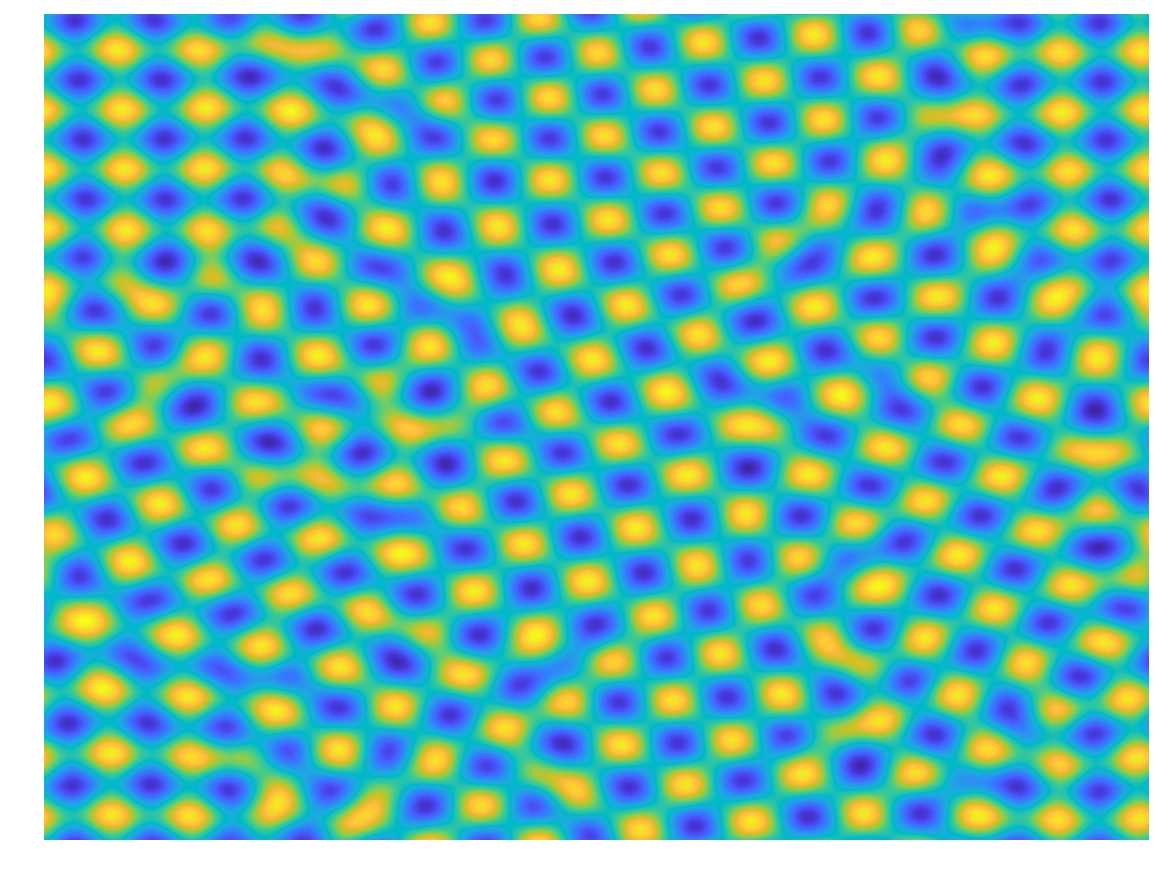}
\caption*{$t=100,200$}
	\end{subfigure}
	
	\begin{subfigure}{0.48\textwidth}
\includegraphics[height=0.48\textwidth,width=0.48\textwidth]{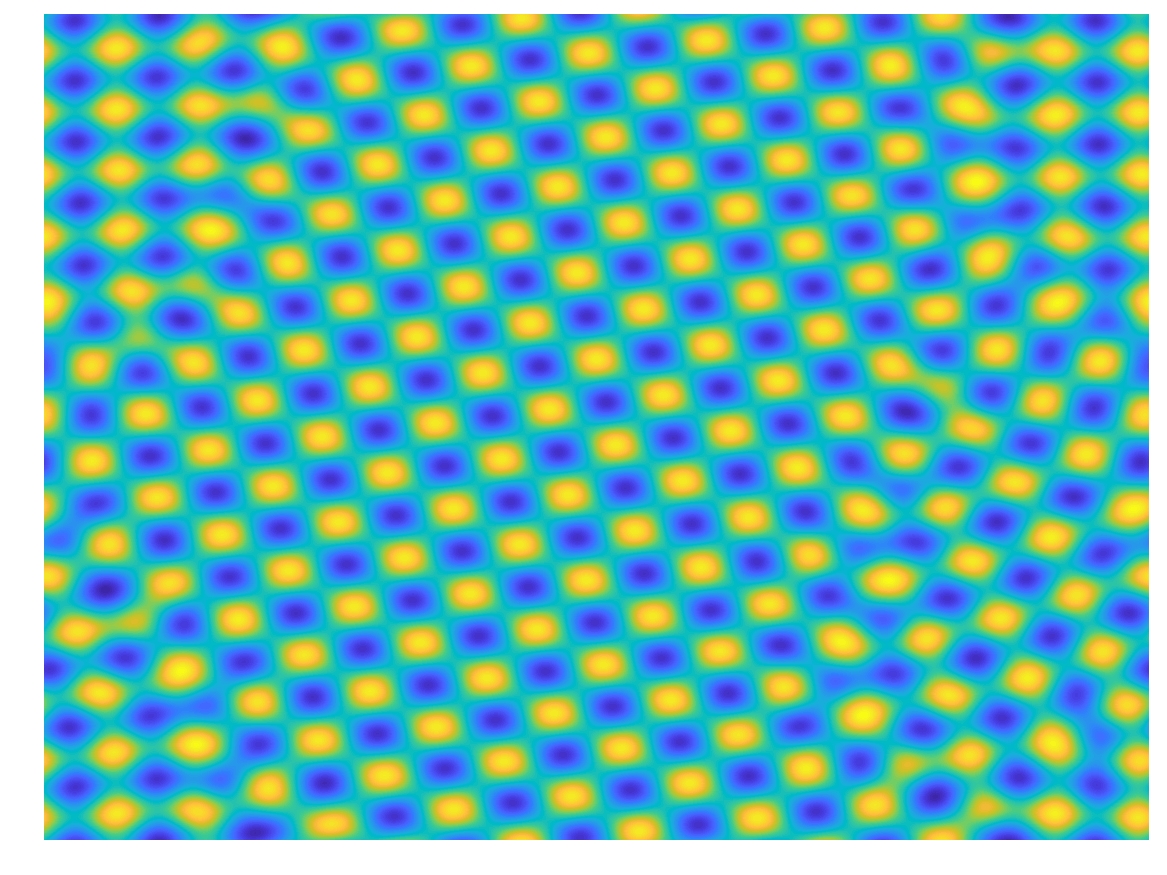} 
\includegraphics[height=0.48\textwidth,width=0.48\textwidth]{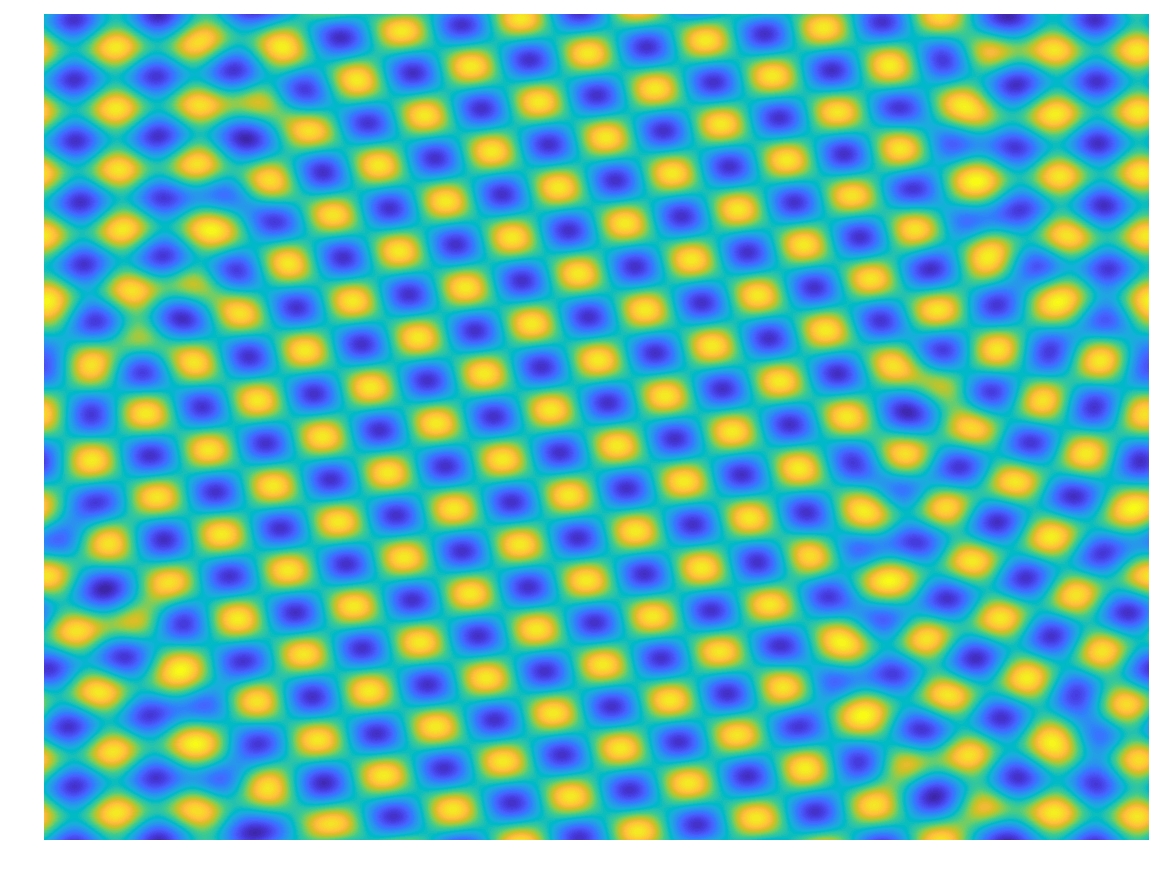}
\caption*{$t=3000, 9000$}
	\end{subfigure}
	
\caption{Time snapshots of the evolution for squared phase field crystal model, with four nucleation sites at $(25, 25), (25,75), (75,25), (75,75)$, respectively. The time sequence for the snapshots is  $t=1$, 10, 20, 40, 100, 200, 500 3000, and 9000. The parameters are $a =0.5, \Omega=[0, 100]^2$.}
	\label{fig:long-time-spfc-four}
	\end{center}
	\end{figure}
	
{\color{black}To illustrate the energy stability property of the proposed numerical scheme, we display the energy evolution of the one nucleation site example, up to $t=1000$, in Figure~\ref{fig:energy evolution}. Three different time step sizes, $\dt_1 = 0.1$, $\dt_2 =0.05$ and $\dt_3 = 0.025$, have been taken in the comparison. The energy dissipation property is satisfied in the numerical simulation. Moreover, the numerical results have a perfect agreement between different time step sizes; this is an amazing fact, due to the large time scale of the numerical simulations.}  

	\begin{figure}
	\centering 
	\includegraphics[width=4.0in]{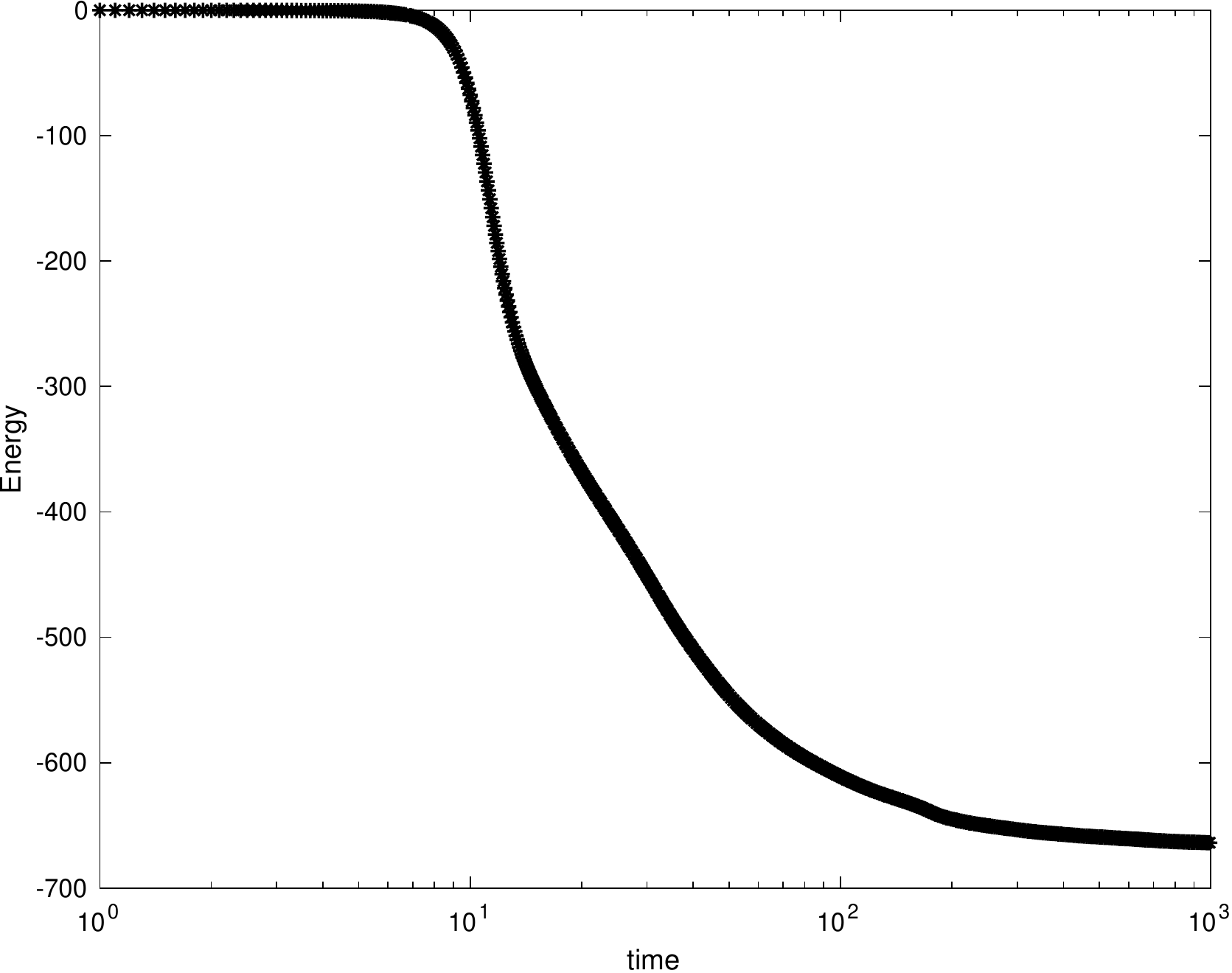}  
\caption{Semi-log plot of the temporal evolution the energy $E_N$ up to $t=1000$, with three different time step sizes: $\dt_1 = 0.1$, $\dt_2 =0.05$ and $\dt_3 = 0.025$. The dotted, solid and star lines represent the plots for $\dt_3 = 0.025$, $\dt_2 = 0.05$ and $\dt_1 = 0.1$, respectively. The plots overlap so that differences are indistinguishable.}     
	\label{fig:energy evolution}
       \end{figure}

\section{Concluding remarks} \label{sec:conclusion}

In this article, we proposed and analyzed two energy stable Fourier pseudo-spectral schemes for the square phase field crystal (SPFC) equation, a gradient flow to model the crystal growth. The schemes exhibit  second order temporal accuracy and spectral accuracy in space. In the energy functional expansion and the corresponding PDE formulations, a composition of the 4-Laplacian and the regular Laplacian operators makes the physical system very challenging, at both the theoretical and numerical levels. To overcome this well-known difficulty, we come up with a modified BDF scheme, with the second order BDF stencil applied in the time direction, combined with an appropriate extrapolation for the concave diffusion term, to ensure the unique solvability and energy stability. In particular, a second order artificial Douglas-Dupont regularization term is added to ensure the energy stability, and a careful treatment leads to the regularization diffusion at a lower order than the surface diffusion term. Such a subtle analysis avoids a higher order artificial diffusion term, therefore a reduced numerical dissipation is expected for the numerical effect. At the theoretical side, the unique solvability, energy stability could be proved with the help of the summation-by-parts formulas in the Fourier pseudo-spectral space. In addition, the energy stability yields a uniform in time $H_N^2$ bound for the numerical solution, and an aliasing error control technique enables us to derive a discrete Sobolev embedding from $H^2$ into $W^{1,6}$. As a result of such a nonlinear estimate, we are able to derive an optimal rate error analysis in the $\ell^\infty (0,T; \ell^2) \cap \ell^2 (0,T; H_N^3)$ norm. In the numerical implementation, the preconditioned steepest descent (PSD) iteration is needed to deal with the composition of the highly nonlinear 4-Laplacian term and the standard Laplacian term, and a geometric convergence could be proved for this iteration. It is the first such result for a 4-Laplacian solver in an $H^{-1}$ gradient flow. A few numerical experiments are presented to demonstrate the efficiency and accuracy of the proposed scheme, including the numerical accuracy test and numerical simulations of square symmetry patterns, with one nucleation site and four nucleation sites, respectively.

	\section*{Acknowledgements} 
This work is supported in part by the Longshan Talent Project of SWUST 18LZX529  (K. Cheng), the grants NSF DMS-1418689 (C.~Wang), NSF DMS-1418692 and NSF DMS-1719854 (S.~Wise). 

	\appendix

\section{Proof of Proposition~\ref{prop:embedding}}
	\label{proof:Prop 2.2}
	
For simplicity of presentation, in the analysis of $\| \nabla_N f \|_6$, we are focused on the estimate of $\| {\cal D}_x f \|_6$. Due to the periodic boundary condition for $f$ and its cell-centered representation, it has a corresponding discrete Fourier transformation, as the form given by~\eqref{spectral-coll-1}: 
	\begin{eqnarray}
f_{i,j,k} = \sum_{\ell,m,n=-K}^{K} \hat{f}_{\ell,m,n}^N \exp \left( 2 \pi {\rm i} ( \ell x_i + m y_j + n z_k ) \right) .
   \label{def:Fourier-1}
	\end{eqnarray}
Then we make its extension to a continuous function:
	\begin{equation}
	\label{def:extension-1}
f_N (x,y,z) = \sum^{K}_{\ell,m,n=-K} \hat{f}^N_{\ell,m,n} \exp \left( 2 \pi {\rm i} ( \ell x + m y + n z ) \right)  .
	\end{equation}
	
The following result is excerpted as Lemma A.2 in~\cite{fengW17c}; similar analyses have also been reported in recent works \cite{cheng16a, feng2017preconditioned}, \emph{et cetera}.  

\begin{lem} \label{lem:A.2} \cite{fengW17c} 
  For $g \in \mathcal{G}_N$,  we have
\begin{eqnarray}
  \| g \|_p \le \sqrt{\frac{p}{2}} \| g_N \|_{L^p} ,  \quad \mbox{with $p = 4, 6$} .  \label{lemma A.2-0}
\end{eqnarray}
\end{lem}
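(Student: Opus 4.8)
The plan is to prove the comparison by passing to Fourier space and quantifying the aliasing (frequency folding) incurred when the continuous interpolant $g_N$ is sampled back onto the grid $\Omega_N$. The restriction to $p\in\{4,6\}$ is essential, because it makes $q:=p/2\in\{2,3\}$ a positive integer, so that the pointwise power $g^q$ is again a grid function and $g_N^q$ is a genuine trigonometric polynomial. Since every $g\in\mathcal{G}_N$ is real-valued, so is $g_N$, and $|g_{i,j,k}|^p=(g_{i,j,k}^q)^2$ holds pointwise. Introducing the grid function $w:=g^q$, I would first record the two elementary identities $\|g\|_p^p=h^d\sum_{i,j,k}|g_{i,j,k}|^p=\|w\|_2^2$ and $\|g_N\|_{L^p}^p=\int_\Omega|g_N|^p\,\dx=\|g_N^q\|_{L^2}^2$. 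The whole point is that $w$ is precisely the restriction to $\Omega_N$ of the continuous function $g_N^q$, whereas $g_N^q$ carries spectral content up to degree $qK$ in each coordinate direction, i.e. strictly beyond the resolved band $\{-K,\dots,K\}$.

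Next I would invoke discrete Parseval for $w$ and continuous Parseval for $g_N^q$. Writing $g_N^q=\sum_{s}\widehat{(g_N^q)}_s\exp(2\pi{\rm i}\,s\cdot x)$ with the multi-index $s$ ranging over $\{-qK,\dots,qK\}^d$, orthonormality of the continuous exponentials gives $\|g_N^q\|_{L^2}^2=\sum_s|\widehat{(g_N^q)}_s|^2$. On the grid, the structural fact driving the estimate is the aliasing relation $\hat w^N_r=\sum_{s\equiv r\,(\mathrm{mod}\,N)}\widehat{(g_N^q)}_s$ for each resolved frequency $r\in\{-K,\dots,K\}^d$; this is immediate from the discrete orthogonality $h^d\sum_{\Omega_N}\exp(2\pi{\rm i}(s-r)\cdot x)=1$ when $s\equiv r$ componentwise modulo $N$ and $=0$ otherwise, reflecting that modes differing by a multiple of $N=2K+1$ are indistinguishable after sampling. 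The counting is then the crux: in a single coordinate the admissible frequencies occupy a window of width $2qK+1=qN-(q-1)$ with residue classes spaced $N$ apart, so at most $q=p/2$ of them fold onto a given $r$; across the $d$ directions the fibres $\{s:s\equiv r\}$ partition the entire frequency set.

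Finally I would estimate the folded sums. A Cauchy--Schwarz bound $|\hat w^N_r|^2\le(\#\{s\equiv r\})\sum_{s\equiv r}|\widehat{(g_N^q)}_s|^2$, combined with the multiplicity bound $\#\{s\equiv r\}\le(p/2)^d$ and the partition property of the fibres, telescopes under summation over $r$ to $\|w\|_2^2=\sum_r|\hat w^N_r|^2\le(p/2)^d\sum_s|\widehat{(g_N^q)}_s|^2=(p/2)^d\|g_N^q\|_{L^2}^2$, whence $\|g\|_p\le(p/2)^{d/p}\|g_N\|_{L^p}$, which is exactly the asserted $\sqrt{p/2}$ in the matched regime $d=p/2$. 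The step I expect to be the main obstacle is precisely this aliasing bookkeeping: one must identify which unresolved frequencies of $g_N^q$ fold onto each resolved mode and establish the multiplicity bound rigorously, and it is here that the oddness $N=2K+1$ and the integrality of $p/2$ enter essentially. The cleanest route to a non-wasteful constant is to view $\hat{(g_N^q)}\mapsto\hat w^N$ as a linear folding operator whose composition with its adjoint is diagonal with entries equal to the fibre multiplicities, so that its operator norm is exactly the square root of the maximal multiplicity; everything else is routine manipulation of the Parseval identities.
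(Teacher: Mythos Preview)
Your argument is correct. The paper does not supply its own proof of this lemma but cites it from~\cite{fengW17c}; what you have written is precisely the standard aliasing argument, and in fact it is just the $r=0$ case of the paper's Lemma~\ref{lemma:aliasing error} applied to $\varphi=g_N^{p/2}\in\mathcal{P}_{(p/2)K}$: the identity $\|g^{p/2}\|_2=\|R_N(g_N^{p/2})\|_{L^2}$ together with that lemma gives $\|g\|_p\le(p/2)^{d/p}\|g_N\|_{L^p}$ directly. Your flag on the constant is also apt: $(p/2)^{d/p}=\sqrt{p/2}$ only when $d=p/2$, but in the paper's three-dimensional setting the lemma is invoked solely with $p=6$ (in the proof of Proposition~\ref{prop:embedding}), where the two constants coincide; for $p=4$ in $d=3$ your bound $2^{3/4}$ is the one the aliasing count actually delivers, and the stated $\sqrt{p/2}$ should be read as inherited from the dimension of the original reference.
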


Then we proceed into the proof of Proposition~\ref{prop:embedding}. 

\begin{proof} 
We denote a discrete grid function, $g := {\cal D}_x f$, at a point-wise level. Since $f$ corresponds to $f_N \in {\cal B}^K$ (the space of trigonometric polynomials of degree at most $K$), we see that $g_N = \partial_x f_N$. As a result, an application of the preliminary estimate~\eqref{lemma A.2-0} reveals that 
\begin{eqnarray} 
  \|  {\cal D}_x f \|_6 = \| g \|_6 \le \sqrt{3} \| g_N \|_{L^6} 
  = \sqrt{3} \| \partial_x f_N \|_{L^6}  \le C \| \Delta f_N \|_{L^2} 
  = C \| \Delta_N f \|_2 ,  \label{prop embedding-1}
\end{eqnarray}  
in which the fourth step is based on the Sobolev embedding in the continuous space: $\| \partial_x f_N \|_{L^6}  \le C \| \Delta f_N \|_{L^2}$, and the last step comes from the fact that $f$ uniquely corresponds to $f_N$. Similar estimates in the other directions could be derived at the same manner: 
\begin{eqnarray} 
  \|  {\cal D}_y f \|_6 \le C \| \Delta_N f \|_2 ,  \quad 
  \|  {\cal D}_z f \|_6 \le C \| \Delta_N f \|_2  .  \label{prop embedding-2}
\end{eqnarray}  
This completes the proof of Proposition~\ref{prop:embedding}. 
\end{proof}

	\bibliographystyle{plain}
	\bibliography{spfc.bib}

\end{document}